\DeclareMathOperator{\Ker}{Ker}
\DeclareMathOperator{\disc}{disc}
\DeclareMathOperator{\Inv}{Inv}
\DeclareMathOperator{\Id}{Id}
\DeclareMathOperator{\Pf}{Pf}
\DeclareMathOperator{\Quad}{Quad}
\newcommand{\Isom}{\stackrel{\sim}{\longrightarrow}}
\newcommand{\Z}{\mathbb{Z}}
\newcommand{\N}{\mathbb{N}}
\newcommand{\Q}{\mathbb{Q}}
\newcommand{\R}{\mathbb{R}}
\newcommand{\pfis}[1]{\langle\!\langle #1\rangle\!\rangle}
\newcommand{\gpfis}[1]{\langle | #1 |\rangle}
\newcommand{\To}{\longrightarrow}
\newcommand{\fdiag}[1]{\langle #1\rangle}
\newcommand{\ens}[2]{\{ #1\,|\, #2\}}
\newcommand{\tld}{\widetilde}
\newcommand{\eps}{\varepsilon}
\newcommand{\bitem}{\item[$\bullet$]}
\newcommand{\pgq}{\geqslant}
\newcommand{\ppq}{\leqslant}
\newcommand{\minus}{\scalebox{0.75}[1.0]{$-$}}
\renewcommand{\phi}{\varphi}
\renewcommand{\hat}{\widehat}
\renewcommand{\bar}{\overline}
\newcommand{\foncdef}[5]{\begin{array}{rrcl}
#1 : & #2 & \To & #3 \\
 & #4 & \longmapsto & #5
\end{array}}
\newcommand{\anonfoncdef}[4]{\begin{array}{rcl}
#1 & \To & #2 \\
#3 & \longmapsto & #4
\end{array}}
\newcommand{\isomdef}[5]{\begin{array}{rrcl}
#1 : & #2 & \Isom & #3 \\
 & #4 & \longmapsto & #5
\end{array}}
\newtheorem{thm}{Theorem}[section]
\newtheorem{prop}[thm]{Proposition}
\newtheorem{coro}[thm]{Corollary}
\newtheorem{lem}[thm]{Lemma}
\newtheorem{defi}[thm]{Definition}
\newtheorem{propdef}[thm]{Proposition-definition}
\theoremstyle{definition}
\newtheorem{rem}[thm]{Remark}
\newtheorem{ex}[thm]{Example}
\author{Nicolas Garrel}
\title{Witt and Cohomological Invariants of Witt Classes}
\begin{document}

\maketitle

\begin{abstract}
  We describe all Witt invariants and mod 2 cohomological invariants
  of the functor $I^n$ as combinations of fundamental invariants ; this
  is related to the study of operations on mod 2 Milnor K-theory.
  We also study behaviour of these invariants with respect to
  products, restrictions, similitudes and ramification.

  MSC : 11E81 (Primary), 12G05 19D45 (Secondary).

  Keywords : Witt ring, cohomological invariants, fundamental filtration,
  lambda operations, Pfister forms.
\end{abstract}

\section*{Introduction}

Building on classical constructions such as the discriminant
and the Hasse-Witt invariant, cohomological invariants have become
a standard tool in the study of quadratic forms. Cohomological
invariants of quadratic forms are also related to cohomological
invariants of algebraic groups, for split groups of orthogonal type.

In \cite{GMS}, Serre introduces cohomological invariants over a field,
and completely describes (away from characteristic 2)
the invariants of $\Quad_n$ (non-degenerated $n$-dimensional quadratic forms)
and $\Quad_{n,\delta}$ (those with prescribed determinant $\delta$),
and in particular this settles the case of invariants of split orthogonal
and special orthogonal groups. In contrast, the case of split
spin groups, corresponding to invariants of $\Quad_n\cap I^3$ (meaning
that the Witt classes of the forms must be in $I^3$), is very much open,
and has only been treated for small $n$ (see for instance \cite{Gar})
or for invariants of small degree (the case of degree 3 has been
essentially solved by Merkurjev in \cite{Mer}); one problem being that we do
not have any satisfying parametrization of $\Quad_n\cap I^3$.

On the other hand, if we move from isometry classes to Witt classes,
following the resolution of Milnor's conjecture by Voevodsky, we
have at hand good descriptions of $I^n$ (see for instance \cite{EKM}),
and at least one important cohomological invariant of $I^n$,
$e_n : I^n(K)\to H^n(K,\mu_2)$. The goal of this article is to
describe all mod 2 cohomological invariants of $I^n$, and study
some of their basic properties.

Our starting point is a construction of Rost (\cite{R99}), who defines
a certain natural operation $P_n: I^n(K)\to I^{2n}(K)$ which behaves like
a divided square in the sense that $P_n(\sum \phi_i) = \sum_{i<j}\phi_i\cdot \phi_j$
if $\phi_i$ are $n$-fold Pfister forms. After composing with
$e_{2n}$ this gives a cohomological invariant of $I^n$ of degree $2n$.
We generalize this to operations $\bar{\pi}_n^d:I^n\to I^{dn}$ for all $d\in \N$
and thus cohomological invariants of degree $dn$. Since
our constructions involve both Witt invariants and cohomological
invariants, in order to avoid repeating very similar proofs in both
settings, we choose to adopt a unified point of view and treat
both cases simultaneously, using $A$ to denote either the Witt ring
or mod 2 cohomology.

We define two sets of generators for invariants, $f_n^d$ (mentioned above,
see \ref{prop_f}) and $g_n^d$ (definition \ref{def_g}),
each being useful depending on the situation. The invariants $g_n^d$ have the important
property that only a finite number of them are non-zero on a fixed
form (proposition \ref{prop_g_borne}), which allows to take infinite combinations,
and we show that any invariant of $I^n$ is equal to such a combination
(theorem \ref{thm_g}). They are also better behaved with respect
to similitudes (proposition \ref{prop_simil}). On the other hand, the
$f_n^d$ are preferable for handling products (proposition \ref{prop_prod} and
corollary \ref{cor_prod_cohom}) and restriction to $I^{n+1}$
(corollaries \ref{cor_restr_w} and \ref{cor_restr_cohom}). We also
study behaviour with respect to residues from discrete valuations
(proposition \ref{prop_ram}), and establish links with Serre's
description of invariants of isometry classes (proposition
\ref{prop_fixed_dim}).

Our invariants may be related to other various constructions on
Milnor K-theory and Galois cohomology, notably by Vial in \cite{Via}.
The invariants defined here may be seen as lifting of Vial's to the level of $I^n$.
See section \ref{sec_operations} for more details.

Finally, we adapt an idea of Rost (\cite{R99}, see also Garibaldi
in \cite{Gar}) to study invariants of Witt classes in $I^n$ that are divisible
by a $r$-fold Pfister form, giving a complete description for $r=1$
(theorem \ref{thm_delta}).

\section*{Notations and some preliminaries}

In all that follows, $k$ is a fixed field of characteristic
different from 2, and $K$ denotes any field extension of $k$.
The set of natural integers is denoted by $\N$, and the positive
integers by $\N^*$; if $x\in \R$, $\lfloor x\rfloor\in \Z$ will denote
its floor, and $\lceil x\rceil$ its ceiling. We extend the binomial
coefficient $\binom{a}{b}$ for arbitrary $a,b\in \Z$ in the only way
that still satisfies Pascal's triangle.

For all facts on quadratic forms, the reader is referred to \cite{EKM}.
All the quadratic forms we consider are assumed to be non-degenerated.
The Grothendieck-Witt ring $GW(K)$ has a fundamental ideal $\hat{I}(K)$,
defined as the kernel of the dimension map $GW(K)\to \Z$.
We denote by $[q]\in W(K)$ the Witt class of an element $q\in GW(K)$,
and this ring morphism $GW(K)\to W(K)$ induces an isomorphism between
$\hat{I}(K)$ and the fundamental ideal $I(K)\subset W(K)$.
If $x\in I(K)$, we write $\hat{x}\in \hat{I}(K)$ for its
(unique) antecedent. If $n\in \N$ and $q\in W(K)$, then
$nq = q + \cdots + q$ is not to be confused with $\fdiag{n}q$
which is pointwise multiplication by the scalar $n\in K^*$.

If $a\in K^*$, then we write $\pfis{a} = \fdiag{1,-a}\in I(K)$,
and if $a_1,\dots,a_n\in K^*$ then
$\pfis{a_1,\dots,a_n}=\pfis{a_1}\cdots \pfis{a_n}\in I^n(K)$.
Those elements are (the Witt classes of) the $n$-fold Pfister
forms, and we use $\Pf_n(K)\subset I^n(K)$ for the set of
such elements. We also write $\gpfis{a_1,\dots,a_n}$
for the antecedent of $\pfis{a_1,\dots,a_n}$ in $\hat{I}^n(K)$;
we call such elements $n$-fold Grothendieck-Pfister elements,
and we write $\hat{\Pf}_n(K)\subset \hat{I}^n(K)$ for their set.
For instance $\gpfis{a} = \fdiag{1}-\fdiag{a}$, so $\gpfis{1}=0$.
Notice that if $q\in W(K)$, then $2q = \pfis{-1}q$,
and in particular if $-1$ is a square in $K$ then $2q=0$ in $W(K)$.
Also, if $\phi\in \Pf_n(K)$, then $\phi^2=2^n\phi$, since
$\pfis{a,a} = \pfis{-1,a} = 2 \pfis{a}$. This relation is also
true if $\phi\in \hat{\Pf}_n(K)$.

By a filtered group $A$ we mean that there are subgroups $A^{\pgq d}$
for all $d\in \Z$, such that $A^{\pgq d+1}\subset A^{\pgq d}$. We say
the filtration is \emph{positive} if $A^{\pgq d}=A$ for all $d\ppq 0$,
and that it is \emph{separated} if $\bigcap_d A^{\pgq d}=0$.
If $A$ is a ring, it is a filtered ring if
$A^{\pgq d}\cdot A^{\pgq d'}\subset A^{\pgq d+d'}$, and $M$ is a filtered
$A$-module if it is a filtered group such that
$A^{\pgq d}\cdot M^{\pgq d'}\subset M^{\pgq d+d'}$.
For any $n\in \Z$, we denote by $M[n]$ the filtered module such that
$(M[n])^{\pgq d}=M^{\pgq d+n}$. A morphism of filtered modules $f:M\to N$
is a module morphism such that $f(M^{\pgq d})\subset N^{\pgq d}$.

Let $\mathrm{Fields}_{/k}$ be the category of field extensions of $k$.
If we are given functors $T:\mathrm{Fields}_{/k} \to \mathrm{Sets} $ 
and $A: \mathrm{Fields}_{/k}\to \mathrm{Ab}$ (the category
of abelian groups), then an invariant of $T$ with values in $A$
(over $k$) is a natural transformation from $T$ to $A$. The set of
such invariants is naturally an abelian group, denoted $\Inv(T,A)$.
If $T$ takes values in \emph{pointed} sets, then we can define
\emph{normalized} invariants as the ones that send the distinguished
element to $0$. This subgroup is denoted $\Inv_0(T,A)$,
and we have $\Inv(T,A) = A(k)\oplus \Inv_0(T,A)$.

Since we want to unify proofs for Witt and cohomological invariants,
we will use $A(K)$ for either $W(K)$ or $H^*(K,\mu_2)$ (we write $A=W$ or
$A=H$ if we want to distinguish cases). For $d\in \N$, we set $A^{\pgq d}(K) = I^d(K)$
if $A=W$, and $A^{\pgq d}(K) =\bigoplus_{i\pgq d}H^i(K,\mu_2)$ if $A=H$.
Then $A(K)$ is a filtered $A(k)$-algebra, and the filtration is separated
and positive. Note that according to the resolution of Minor's conjecture
by Voevodsky et al., the graded ring associated to $A(K)$ is in both cases
the mod 2 cohomology ring $H^*(K,\mu_2)$.

For any $n\in \N^*$, we write $M(n)=\Inv(I^n,A)$, and
$M^{\pgq d}(n)= \Inv(I^n,A^{\pgq d})$ for all $d\in \N$. Similarly,
the subgroups of normalized invariants are denoted $M_0(n)$ and
$M_0^{\pgq d}(n)$. Then $M(n)$ is a filtered $A(k)$-algebra, and $M_0(n)$
is a submodule.

We list here the formal properties of $A$ on which the article relies.
We have a group morphism $f_n:I^n(K)\to A^{\pgq n}(K)$ (either the identity
if $A=W$, or the morphism $e_n$ given by the Milnor conjecture if $A=H$)
and we write $\{a_1,\dots,a_n\} = f_n(\pfis{a_1,\dots,a_n})$
(so it is either a Pfister form or a Galois symbol depending on $A$).
Note that
\begin{equation}\label{eq_prod_fn}
  f_n(x)\cdot f_m(y) = f_{n+m}(xy).
\end{equation}

We set $\delta = \delta(A) = 1$ if $A=W$, and $\delta = 0$
if $A=H$. Then we have
\begin{equation}\label{eq_delta_sum}
  \forall a,b\in K^*, \{ab\} = \{a\} + \{b\} -\delta \{a,b\}
\end{equation}
and
\begin{equation}\label{eq_delta_2}
  \delta \{-1\} = 2 \in A(K).
\end{equation}

We will also freely use the following lemmas:

\begin{lem}\label{lem_a_filtr}
  If $x\in A(K)$ is such that for any extension $L/K$ and any
  $\phi\in \Pf_n(L)$ we have $f_n(\phi)\cdot x\in A^{\pgq d+n}(L)$,
  then $x\in A^{\pgq d}(K)$. In particular, for any $n\in \N^*$,
  if $f_n(\phi)\cdot x=0$ for all $\phi\in \Pf_n(L)$, then $x=0$.
\end{lem}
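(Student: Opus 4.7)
The plan is to prove the contrapositive: supposing $x\notin A^{\pgq d}(K)$, I would exhibit an extension $L/K$ and a $\phi\in \Pf_n(L)$ with $f_n(\phi)\cdot x\notin A^{\pgq d+n}(L)$. Since the filtration on $A(K)$ is positive and separated, there is a maximal $d'\in \N$ with $x\in A^{\pgq d'}(K)$, and the hypothesis forces $d'<d$. Let $\bar{x}$ denote the nonzero image of $x$ in $A^{\pgq d'}(K)/A^{\pgq d'+1}(K)$, which by Voevodsky's resolution of Milnor's conjecture is canonically $H^{d'}(K,\mu_2)$, uniformly in $A=W$ and $A=H$. The case $n=0$ is then immediate (take $L=K$), so I focus on $n\pgq 1$.

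Take $L=K(t_1,\ldots,t_n)$ and the generic Pfister form $\phi=\pfis{t_1,\ldots,t_n}$. If $f_n(\phi)\cdot x$ lay in $A^{\pgq d+n}(L)\subset A^{\pgq d'+n+1}(L)$, then passing to the graded piece would give
\[ \{t_1,\ldots,t_n\}\cdot \bar{x}_L = 0 \text{ in } H^{d'+n}(L,\mu_2). \]
I would contradict this by iterated residues. Writing $L=F(t_1)$ with $F=K(t_2,\ldots,t_n)$, the tame residue $\partial_{t_1}:H^*(L,\mu_2)\to H^{*-1}(F,\mu_2)$ satisfies $\partial_{t_1}(\{t_1\}\cdot y_L)=y$ for any $y$ lifting from $F$; applied to our equation (where $\{t_2,\ldots,t_n\}\cdot \bar{x}_F$ is such a lift), this gives $\{t_2,\ldots,t_n\}\cdot \bar{x}_F = 0$ in $H^{d'+n-1}(F,\mu_2)$. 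Induction on $n$ then forces $\bar{x}=0$, contradicting the choice of $d'$.

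Finally, the ``in particular'' clause follows at once: if $f_n(\phi)\cdot x=0$ for all $\phi$ and all $L$, then $f_n(\phi)\cdot x\in A^{\pgq d+n}(L)$ for every $d\in \N$, so the main statement gives $x\in A^{\pgq d}(K)$ for every $d$, whence $x=0$ by separation. The main obstacle in the argument is reconciling the two cases $A=W$ and $A=H$: this is handled cleanly by Voevodsky's theorem, which identifies both associated graded rings with mod 2 Galois cohomology, reducing the whole proof to a tame-symbol computation there.
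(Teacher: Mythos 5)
Your proof is correct and takes essentially the same approach the paper indicates (``by specialisation, taking $\phi$ to be a generic Pfister form''): you adjoin generic indeterminates, form the Pfister form $\pfis{t_1,\dots,t_n}$, and specialise via iterated tame residues, with the minor (and sensible) variant of first passing to the associated graded ring $H^*(-,\mu_2)$ so that a single symbol computation handles both cases $A=W$ and $A=H$.
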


\begin{lem}\label{lem_a_inv}
  $\Inv(\Pf_n,A) = A(K)\oplus A(K)\cdot f_n$ where we consider
  invariants defined over $K$.
\end{lem}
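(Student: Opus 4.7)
The reverse inclusion $A(K)\oplus A(K)\cdot f_n \subseteq \Inv(\Pf_n, A)$ is immediate: constant maps give invariants, and $\pi\mapsto a_1 f_n(\pi)$ is natural because $f_n$ is. For the forward direction, given $\alpha\in\Inv(\Pf_n, A)$, set $a_0:=\alpha(\pfis{1,\dots,1})\in A(K)$ (the value at the basepoint of $\Pf_n$, since $\pfis{1,\dots,1}=0\in I^n$) and replace $\alpha$ by $\alpha-a_0$; it thus suffices to prove that any normalized $\alpha$ equals $a_1 f_n$ for some $a_1\in A(K)$. The plan is to reduce everything to a single generic evaluation: let $F:=K(t_1,\ldots,t_n)$, set $\pi_g:=\pfis{t_1,\ldots,t_n}\in\Pf_n(F)$, and let $x:=\alpha(\pi_g)\in A(F)$.

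First I would show that $x$ determines $\alpha$. Given $\pfis{b_1,\ldots,b_n}\in\Pf_n(L)$ with $L/K$, pass to $L':=L(s_1,\ldots,s_n)$. The $K$-algebra embedding $F\hookrightarrow L'$ sending $t_i\mapsto b_is_i^2$ is well-defined (the $b_is_i^2$ are algebraically independent over $K$) and sends $\pi_g$ to the pullback of $\pfis{b_1,\ldots,b_n}$ in $\Pf_n(L')$, since $b_is_i^2\equiv b_i\pmod{L'^{*2}}$. Naturality of $\alpha$ thus forces the pullback of $\alpha(\pfis{b_1,\ldots,b_n})$ to $A(L')$ to equal the pullback of $x$; as $L'/L$ is purely transcendental, $A(L)\hookrightarrow A(L')$ is injective and $\alpha(\pfis{b_1,\ldots,b_n})$ is uniquely recovered from $x$.

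It remains to identify $x$. The $S_n$-action on $F/K$ permuting the $t_i$'s fixes $\pi_g$, so naturality forces $x$ to be symmetric in $t_1,\ldots,t_n$. Working over $F(u_1,\ldots,u_n)$, the $K$-automorphism $t_i\mapsto t_i u_i^2$ also fixes $\pi_g$, which (using the injection $A(F)\hookrightarrow A(F(u_1,\ldots,u_n))$) forces $x$ to be invariant under rescaling each $t_i$ by a square. Combined with the normalization---which forces vanishing of $x$ at the specializations $t_i\mapsto 1$, via the residues at the corresponding divisors---these constraints should force $x=a_1\cdot\{t_1,\ldots,t_n\}$ for a unique $a_1\in A(K)$, whence $\alpha=a_1\cdot f_n$ by the previous paragraph.

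The main obstacle is this last identification, which relies on controlling $A(F)$ as an $A(K)$-module. One invokes the iterated Milnor--Scharlau exact sequence for $W(K(t))$ (and its analogue for mod 2 Galois cohomology, valid by Voevodsky's theorem) to present $A(F)$ in terms of symbols $\{p_1(t),\ldots,p_k(t)\}$ and residues at closed points of $\mathbb{A}^n_K$. Under this description, the symmetry and square-rescaling constraints cut $x$ down to the ``top symbol'' component $A(K)\cdot\{t_1,\ldots,t_n\}$, and the residue-vanishing conditions from the normalization eliminate the remaining lower-order contributions.
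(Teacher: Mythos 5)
The paper does not prove this lemma---it cites it, with the $A=H$ case being \cite[thm 18.1]{GMS} and the $A=W$ case being \cite[ex 27.17]{GMS}. Your proposal attempts to reconstruct Serre's theorems from scratch, which is a more ambitious undertaking than what the paper itself does.

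Your first three paragraphs are sound: reducing to normalized invariants, passing to the generic Pfister form $\pi_g=\pfis{t_1,\ldots,t_n}$ over $F=K(t_1,\ldots,t_n)$, and showing by specialization that $\alpha$ is determined by $x=\alpha(\pi_g)$ are all correct (the injectivity of $A(L)\to A(L')$ for purely transcendental $L'/L$ holds for both $W$ and $H^*(-,\mu_2)$). The $S_n$-symmetry and square-rescaling constraints on $x$ are also correctly identified.

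The genuine gap is in the final paragraph. The claim that symmetry, square-rescaling invariance, and vanishing of the relevant residues together ``should force'' $x\in A(K)\cdot\{t_1,\ldots,t_n\}$ is exactly the content of Serre's theorems, and it is far from automatic. The iterated Milnor/Faddeev sequence gives an additive presentation of $A(F)$ in terms of residues at closed points, but that presentation depends on choices of points and uniformizers and is not functorial in a naive way; the map $t_i\mapsto t_iu_i^2$ does not fix closed points of $\mathbb{A}^n_K$, so tracking what the rescaling constraint does to the residue components is delicate, and the argument that it eliminates all ``lower-order'' pieces requires a genuine induction on $n$. In \cite{GMS} this is not a one-step deduction: Serre first classifies invariants of $(\mathbb{Z}/2)^n$ (a separate theorem whose proof occupies a full section, itself by a careful residue argument), then determines which of those invariants descend along the surjection $(a_1,\ldots,a_n)\mapsto\pfis{a_1,\ldots,a_n}$ onto $\Pf_n$. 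As written, your last paragraph compresses all of that into a single ``one invokes\ldots'' sentence, so it is a proof strategy rather than a proof.
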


The first lemma can be proved by specialisation, taking $\phi$ to be
a generic Pfister form; the second corresponds to two theorems of Serre
(\cite[thm 18.1, ex 27.17]{GMS}).

\section{Some pre-$\lambda$-ring structures}

We refer to \cite{Yau} for the basic theory of $\lambda$-rings.
If $R$ is a commutative ring, a pre-$\lambda$-ring structure on
$R$ is the data of applications $\lambda^d:R\to R$ for all $d\in \N$
such that for all $x,y\in R$:
\begin{enumerate}[label=(\roman*)]
  \item $\lambda^0(x) = 1$;
  \item $\lambda^1(x) = x$;
  \item $\forall d\in\N,\, \lambda^d(x+y) = \sum_{k=0}^d \lambda^k(x)\lambda^{d-k}(y)$.
\end{enumerate}

\begin{ex}
  The example we are interested in is $R=GW(K)$. The $\lambda^d$ are the
  exterior powers of bilinear forms, as defined in \cite{Bou}, and it is
  shown in \cite{McGar} that they define a $\lambda$-ring structure on
  $GW(K)$ (which is a pre-$\lambda$-ring structure with additional
  conditions).
\end{ex}

We define $\Lambda(R)=1+tR[[t]]$, the subset of formal power series
with coefficients in $R$ that have a constant coefficient equal to $1$.
It is a group for the multiplication of formal series.
If we set $\lambda_t(x) = \sum_{d\in \N} \lambda^d(x)t^d \in R[[t]]$,
we see that a pre-$\lambda$-ring structure on $R$ is equivalent to the
data of a group morphism $\lambda_t: (R,+)\to (\Lambda(R),\cdot)$ such
that for all $x\in R$ the degree 1 coefficient of $\lambda_t(x)$ is $x$.
We will switch freely between those two descriptions.

\begin{ex}
  For the canonical $\lambda$-ring structure on $GW(K)$, we have
  $\lambda_t(\fdiag{a}) = 1+\fdiag{a}t$ for all $a\in K^*$. 
\end{ex}

Recall that for any formal series $f,g\in R[[t]]$ such that the constant coefficient of
$f$ is zero, we can define the composition $g\circ f \in R[[t]]$. If furthermore the
degree 1 coefficient of $f$ is invertible in $R$, then $f$ as an inverse for
the composition, which we denote $f^{\circ -1}$.

\begin{lem}\label{lem_lambda_compo}
  Let $R$ be a commutative ring. If $\lambda_t: R\to \Lambda(R)$ defines
  a pre-$\lambda$-ring structure on $R$, then for any $f\in t+t^2R[[t]]$,
  the map
  \[ \foncdef{\lambda_{f(t)}}{R}{\Lambda(R)}{x}{\lambda_t(x)\circ f = \sum_{d\in \N} \lambda^d(x)f(t)^d} \]
  also defines a pre-$\lambda$-ring structure.
\end{lem}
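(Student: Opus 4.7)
The plan is to verify the three defining axioms of a pre-$\lambda$-ring structure directly from the formula $\lambda_{f(t)}(x) = \lambda_t(x)\circ f$. Axioms (i) and (ii) reduce to reading off low-degree coefficients, while axiom (iii) is the one requiring an actual argument, though still a short one.

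For (i), the constant coefficient of $\lambda_t(x)\circ f$ is obtained by evaluating $\lambda_t(x)$ at $f(0)=0$, which gives $\lambda^0(x)=1$. For (ii), write $f(t)=t+t^2 h(t)$ with $h\in R[[t]]$; expanding
\[ \lambda_t(x)\circ f = \sum_{d\in \N}\lambda^d(x)f(t)^d, \]
the only contribution to the coefficient of $t$ comes from the $d=1$ term (since $f(t)^d$ has valuation $\pgq 2$ for $d\pgq 2$, and $\lambda^0(x)=1$ contributes in degree $0$), giving $\lambda^1(x)\cdot 1 = x$.

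The key step is (iii), for which I would recast the statement as: $\lambda_{f(t)}\colon(R,+)\to(\Lambda(R),\cdot)$ is a group morphism. The hypothesis gives this for $\lambda_t$, so it suffices to show that the substitution map
\[ \foncdef{\Phi_f}{\Lambda(R)}{\Lambda(R)}{g}{g\circ f} \]
is a well-defined group morphism, since $\lambda_{f(t)}=\Phi_f\circ \lambda_t$. Well-definedness follows from $g(f(0))=g(0)=1$; the morphism property follows from the basic compatibility
\[ (g_1\cdot g_2)\circ f = (g_1\circ f)\cdot(g_2\circ f) \]
in $R[[t]]$, which holds because substitution is a ring endomorphism of the (appropriate subring of) $R[[t]]$ whenever the series being substituted in has zero constant term.

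I do not expect any real obstacle: the lemma is essentially a formal verification once substitution by $f$ has been identified as a group endomorphism of $\Lambda(R)$. The mild point to keep in mind is that one does \emph{not} need $f$ to have an inverse for composition here (the hypothesis $f\in t+t^2R[[t]]$ is only used to make $\lambda_{f(t)}^1 =\mathrm{id}$); the invertibility of $f$ for composition, mentioned just before the lemma, will presumably matter only for subsequent applications.
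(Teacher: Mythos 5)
Your proof is correct and takes essentially the same approach as the paper: both establish axiom (iii) via the compatibility of substitution with products of power series, and both check that the degree-$1$ coefficient is unchanged because $f\in t+t^2R[[t]]$. The only difference is that you spell out axiom (i) explicitly and package the key step as the statement that $g\mapsto g\circ f$ is a group endomorphism of $\Lambda(R)$, whereas the paper performs the two-line computation directly.
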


\begin{proof}
  We have for any $x,y\in R$:
  \begin{align*}
    \lambda_t(x+y)\circ f &= (\lambda_t(x)\lambda_t(y))\circ f \\
    &= (\lambda_t(x)\circ f)\cdot (\lambda_t(y)\circ f).
  \end{align*}
  Furthermore, since the degree 1 term of $f(t)$ is $t$, the degree 1 coefficient
  of $\lambda_t(x)\circ f$ is the same as that of $\lambda_t(x)$, which is $x$.
\end{proof}

We want to define for each $n\in \N^*$ a pre-$\lambda$-ring structure on $GW(K)$
that vanishes for $d\pgq 2$ on $n$-fold Grothendieck-Pfister elements.
Our starting point is the following fundamental observation:

\begin{lem}\label{lem_lambda_gpfis}
  Let $a\in K^*$. For any $d\pgq 1$, we have $\lambda^d(\gpfis{a}) = \gpfis{a}$.
  Therefore,
  \[ \lambda_t(\gpfis{a}) = 1 + \gpfis{a}x(t) \]
  where $x(t) = \sum_{d\pgq 1}t^d = \frac{t}{1-t}$.
\end{lem}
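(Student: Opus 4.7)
The plan is to compute $\lambda_t(\gpfis{a})$ directly in $\Lambda(GW(K))$ by exploiting the multiplicativity $\lambda_t(x+y)=\lambda_t(x)\lambda_t(y)$ on additive decompositions of $\gpfis{a} = \fdiag{1}-\fdiag{a}$, together with the known value $\lambda_t(\fdiag{b}) = 1+\fdiag{b}t$ for diagonal one-forms.

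First I would write
\[ \lambda_t(\gpfis{a}) = \lambda_t(\fdiag{1})\cdot \lambda_t(-\fdiag{a}) = (1+t)\cdot (1+\fdiag{a}t)^{-1}, \]
where the last equality uses that $\lambda_t$ is a group morphism to $(\Lambda(GW(K)),\cdot)$, so $\lambda_t(-\fdiag{a})$ is the multiplicative inverse of $\lambda_t(\fdiag{a})=1+\fdiag{a}t$. Expanding the geometric series gives
\[ (1+\fdiag{a}t)^{-1} = \sum_{d\pgq 0} (-\fdiag{a})^d t^d = \sum_{d\pgq 0}(-1)^d\fdiag{a^d}t^d, \]
using $\fdiag{a}^d=\fdiag{a^d}$ in $GW(K)$. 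Multiplying by $(1+t)$ then collecting terms, the coefficient of $t^d$ for $d\pgq 1$ becomes $(-1)^{d-1}\fdiag{a^{d-1}}\cdot(\fdiag{1}-\fdiag{a}) = (-1)^{d-1}\fdiag{a^{d-1}}\cdot \gpfis{a}$.

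The key computational point, which is essentially the only substantive step, is the identity $\gpfis{a}\cdot \fdiag{a} = -\gpfis{a}$ in $GW(K)$: indeed $(\fdiag{1}-\fdiag{a})\fdiag{a} = \fdiag{a}-\fdiag{a^2} = \fdiag{a}-\fdiag{1} = -\gpfis{a}$, since $a^2$ is a square. Iterating, $\gpfis{a}\cdot \fdiag{a^{d-1}} = (-1)^{d-1}\gpfis{a}$, so the coefficient of $t^d$ simplifies to $(-1)^{d-1}(-1)^{d-1}\gpfis{a} = \gpfis{a}$ for every $d\pgq 1$.

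This yields $\lambda_t(\gpfis{a}) = 1+\gpfis{a}\sum_{d\pgq 1}t^d = 1+\gpfis{a}\,\frac{t}{1-t}$, from which $\lambda^d(\gpfis{a})=\gpfis{a}$ for all $d\pgq 1$ is read off as the coefficient of $t^d$. There is no real obstacle here; the only place where one has to be careful is distinguishing $-\fdiag{a}$ (the additive inverse in $GW(K)$) from $\fdiag{-a}$, and recognizing that the square-class relation $\fdiag{a^2}=\fdiag{1}$ is what makes the telescoping work cleanly.
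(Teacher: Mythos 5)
Your proof is correct and takes essentially the same approach as the paper: both compute $\lambda_t(\gpfis{a})$ via multiplicativity on the decomposition $\gpfis{a}=\fdiag{1}-\fdiag{a}$, obtaining $(1+t)(1+\fdiag{a}t)^{-1}$, and both ultimately reduce to the square-class relation $\fdiag{a}^2=\fdiag{1}$. You simply spell out the geometric-series expansion and the resulting telescoping (via $\gpfis{a}\fdiag{a}=-\gpfis{a}$) more explicitly, where the paper states the simplified result directly.
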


\begin{proof}
  We have
  \begin{align*}
    \lambda_t(1-\fdiag{a}) &= \frac{\lambda_t(1)}{\lambda_t(\fdiag{a})} \\
                           &= \frac{1+t}{1+\fdiag{a}t} \\
                           &= 1 + \sum_{d\pgq 1} (1-\fdiag{a})t^d. 
  \end{align*}
  using $\fdiag{a}^2=1$.
\end{proof}

We then define some formal series, for any $n\in \N^*$: $x_n(t)\in \Z[[t]]$
is defined recursively by
\[ x_1(t) = \frac{t}{1-t},\quad x_{n+1} = x_n + 2^{n-1}x_n, \]
and $h_n(t)\in \Q[[t]]$ by
\[ h_n = x_n^{\circ -1}. \]

\begin{lem}\label{lem_a_b_h}
  For any $n\in \N^*$, we have $h_n(t)\in \Z[[t]]$. Furthermore,
  if $a_n$ and $b_n$ are respectively the even part and odd part
  of $x_n$, then:
  \[ \left\{ \begin{array}{l}
    a_{n+1} = 2^n b_n^2 = 2a_n + 2^n a_n^2 \\
    b_{n+1} = b_n + 2^n a_n b_n.
    \end{array} \right. \]
\end{lem}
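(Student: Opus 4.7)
The plan is to handle the two assertions separately.

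For the integrality of $h_n$, an immediate induction on the recursion $x_{n+1}=x_n+2^{n-1}x_n^2$ shows that $x_n\in \Z[[t]]$ and that the degree $1$ coefficient of $x_n$ equals $1$ for every $n$. It is a classical fact that the compositional inverse of a formal series of the form $t+t^2\Z[[t]]$ lies again in $\Z[[t]]$: its coefficients are determined recursively, each being an integer polynomial in the previous ones and in the coefficients of the original series. This yields $h_n\in \Z[[t]]$.

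For the system on $a_n$ and $b_n$, I would compute even and odd parts of both sides of $x_{n+1}=x_n+2^{n-1}x_n^2$. Since $x_n^2=a_n^2+2a_nb_n+b_n^2$ has even part $a_n^2+b_n^2$ and odd part $2a_nb_n$, one obtains immediately
\[a_{n+1}=a_n+2^{n-1}(a_n^2+b_n^2),\qquad b_{n+1}=b_n+2^na_nb_n.\]
The formula for $b_{n+1}$ already matches the claim, and the two target expressions $2a_n+2^na_n^2$ and $2^nb_n^2$ for $a_{n+1}$ both reduce, after subtracting $a_n+2^{n-1}(a_n^2+b_n^2)$, to the single auxiliary identity
\[(\star_n)\qquad a_n=2^{n-1}(b_n^2-a_n^2).\]

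The main work, and what I expect to be the only real obstacle, is to prove $(\star_n)$ by induction on $n$. The base case $n=1$ is a direct computation from $a_1=t^2/(1-t^2)$ and $b_1=t/(1-t^2)$, giving $b_1^2-a_1^2=t^2/(1-t^2)=a_1$. For the inductive step, note that $(\star_n)$ is equivalent to $a_n+2^{n-1}a_n^2=2^{n-1}b_n^2$; substituting this into the formula for $a_{n+1}$ already yields $a_{n+1}=2^nb_n^2$. Then, using $b_{n+1}=b_n(1+2^na_n)$ and $a_{n+1}^2=2^{2n}b_n^4$, one expands
\[b_{n+1}^2-a_{n+1}^2=b_n^2\bigl[(1+2^na_n)^2-2^{2n}b_n^2\bigr]=b_n^2\bigl[1+2^{n+1}a_n-2^{2n}(b_n^2-a_n^2)\bigr],\]
and replacing $b_n^2-a_n^2$ by $2^{1-n}a_n$ via $(\star_n)$ makes the cross terms cancel, leaving $b_{n+1}^2-a_{n+1}^2=b_n^2$. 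Hence $2^n(b_{n+1}^2-a_{n+1}^2)=2^nb_n^2=a_{n+1}$, which is exactly $(\star_{n+1})$ and closes the induction.
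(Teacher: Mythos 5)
Your proof is correct and the core argument for the recurrence is essentially the one in the paper: decompose $x_{n+1}=x_n+2^{n-1}x_n^2$ into even and odd parts, observe that everything reduces to the auxiliary identity $a_n+2^{n-1}a_n^2=2^{n-1}b_n^2$ (your $(\star_n)$, rearranged), and prove it by induction starting from the direct $n=1$ computation. Your inductive step is organized a bit more cleanly than the paper's: instead of expanding both $a_{n+1}+2^na_{n+1}^2$ and $2^nb_{n+1}^2$ separately to the common value $2^nb_n^2(1+2^{2n}b_n^2)$, you isolate the tidy intermediate identity $b_{n+1}^2-a_{n+1}^2=b_n^2$, which is equivalent but easier to check. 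The genuine difference is in the integrality of $h_n$: the paper proceeds recursively via $h_{n+1}=h_n\circ p_n^{\circ -1}$ and computes $p_n^{\circ-1}(t)=tC(-2^{n-1}t)$ explicitly in terms of the Catalan generating function, whereas you invert $x_n$ directly using the elementary (and correct) general fact that the compositional inverse of any series in $t+t^2\Z[[t]]$ again has integer coefficients. Your route is shorter and needs no knowledge of Catalan numbers; the paper's has the minor side benefit of identifying $p_n^{\circ-1}$ explicitly.
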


\begin{proof}
  Note first that $h_1(t) = \frac{t}{1+t}\in \Z[[t]]$. Let
  $p_n(t) = t + 2^{n-1}t^2\in \Z[[t]]$; then by definition $x_{n+1}=p_n\circ x_n$,
  so $h_{n+1} = h_n\circ p_n^{\circ -1}$. Now a simple computation yields
  $p_n^{\circ -1} = tC(-2^{n-1}t)$ where $C(t) = \frac{1- \sqrt{1-4t}}{2t}$
  is the generating function of the Catalan numbers (this is essentially equivalent
  to the well-known functional equation for $C(t)$); in particular, $p_n^{\circ -1}$
  has integer coefficients, so $h_n(t)\in \Z[[t]]$.

  Separating even and odd parts, the recursive definition of $x_n$ yields
  \[ \left\{ \begin{array}{l}
    a_{n+1} = a_n + 2^{n-1}a_n^2 + 2^{n-1}b_n^2 \\
    b_{n+1} = b_n + 2^n a_n b_n.
  \end{array} \right. \]
  So we need to show that for any $n\in \N^*$, $a_n+2^{n-1}a_n^2=2^{n-1}b_n^2$.
  If $n=1$, this is a direct computation, using that $a_1(t) = \frac{t^2}{1-t^2}$
  and $b_1(t) = \frac{t}{1-t^2}$.

  Now suppose the formula holds until $n\in \N^*$. Then
  \[ a_{n+1} + 2^n a_{n+1}^2 = 2^n b_n^2 + 2^n (2^{n}b_n^2)^2 = 2^n b_n^2(1 + 2^{2n}b_n^2) \]
  and
  \begin{align*}
    2^n b_{n+1}^2 &= 2^n b_n^2(1+2^n a_n)^2 \\
    &= 2^n b_n^2(1 + 2^{n+1}a_n + 2^{2n}a_n^2) \\
    &= 2^n b_n^2(1 + 2^{2n}b_n^2),
  \end{align*}
  which shows the expected formula.
\end{proof}

We can now use those formal series to define our pre-$\lambda$-ring
structures:

\begin{thm}\label{thm_pi}
  For any $n\in \N^*$, the map $(\pi_n)_t = \lambda_{h_n(t)}$ defines
  a pre-$\lambda$-ring structure on $GW(K)$ such that $\pi_n^d(\phi)=0$
  for any $\phi\in \hat{\Pf}_n(K)$ and any $d\pgq 2$.
\end{thm}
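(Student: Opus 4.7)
The plan splits in two parts, mirroring the two assertions. For the pre-$\lambda$-ring structure, Lemma~\ref{lem_a_b_h} gives $h_n(t) \in \Z[[t]] \subset GW(K)[[t]]$, and since $x_n(t) = t + O(t^2)$ has linear coefficient $1$, so does its compositional inverse $h_n$. Hence $h_n \in t + t^2 GW(K)[[t]]$, and Lemma~\ref{lem_lambda_compo} directly produces the pre-$\lambda$-ring structure $(\pi_n)_t = \lambda_{h_n(t)}$.

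For the vanishing, notice that $(\pi_n)_t(\phi) = \lambda_t(\phi) \circ h_n(t)$, so the condition $\pi_n^d(\phi) = 0$ for all $d\pgq 2$ is equivalent to $(\pi_n)_t(\phi) = 1 + \phi\, t$; substituting $t \mapsto x_n(t)$, this is in turn equivalent to the identity
\[ \lambda_t(\phi) = 1 + \phi \cdot x_n(t) \text{ in } GW(K)[[t]], \]
which I will prove by induction on $n\pgq 1$ for all $\phi \in \hat{\Pf}_n(K)$. The case $n=1$ is Lemma~\ref{lem_lambda_gpfis}. For the inductive step, every $\phi \in \hat{\Pf}_{n+1}(K)$ writes as $\phi = \psi\gpfis{a} = \psi - \psi\fdiag{a}$ with $\psi \in \hat{\Pf}_n(K)$ and $a \in K^*$; by multiplicativity of $\lambda_t$,
\[ \lambda_t(\phi) \cdot \lambda_t(\psi\fdiag{a}) = \lambda_t(\psi) = 1 + \psi\, x_n(t). \]
Since $\fdiag{a}$ is a line in the $\lambda$-ring $GW(K)$ and $\fdiag{a}^2 = 1$, the formula $\lambda^d(\psi\fdiag{a}) = \fdiag{a}^d \lambda^d(\psi)$ combined with the induction hypothesis yields
\[ \lambda_t(\psi\fdiag{a}) = 1 + \psi\bigl(a_n(t) + \fdiag{a}\, b_n(t)\bigr), \]
where $a_n, b_n$ denote the even and odd parts of $x_n$.

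Substituting this expression along with the anticipated $\lambda_t(\phi) = 1 + \phi\, x_{n+1}(t)$ into the cross-multiplied identity above, and using $\psi^2 = 2^n\psi$ (preliminaries), $\psi\phi = 2^n\phi$, and $\fdiag{a}\phi = -\phi$ (the latter two following from $\phi = \psi(1-\fdiag{a})$ and $\fdiag{a}^2 = 1$), the claim reduces to the scalar identity
\[ x_{n+1}(t)\bigl(1 + 2^n(a_n - b_n)\bigr) = b_n(t) \text{ in } \Z[[t]]. \]
This is the main computation: using $a_{n+1} = 2^n b_n^2$ and $b_{n+1} = b_n(1 + 2^n a_n)$ from Lemma~\ref{lem_a_b_h}, the left-hand side factors as $b_n\bigl(1 + 2^n(a_n+b_n)\bigr)\bigl(1 + 2^n(a_n-b_n)\bigr) = b_n\bigl((1 + 2^n a_n)^2 - 2^{2n} b_n^2\bigr)$, and the bracket collapses to $1$ thanks to the further relation $2a_n + 2^n a_n^2 = 2^n b_n^2$ (also from Lemma~\ref{lem_a_b_h}). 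The hard part is really this cross-multiplication combined with the bookkeeping of how $\fdiag{a}$ and $\psi$ interact with $\phi$; once one commits to the scalar identity, the rest is mechanical.
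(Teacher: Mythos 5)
Your proof is correct and follows the same structure as the paper's: the same reduction to Lemma~\ref{lem_lambda_compo} (after noting $h_n\in t+t^2\Z[[t]]$), the same induction on $n$ via the decomposition $\phi=\psi\gpfis{a}$, and the same inputs from Lemma~\ref{lem_a_b_h}. The only cosmetic difference is that you pin down $\lambda_t(\phi)$ by verifying the anticipated answer in the equation $\lambda_t(\phi)\lambda_t(\psi\fdiag{a})=\lambda_t(\psi)$ and collapse the check to the single scalar identity $x_{n+1}\bigl(1+2^n(a_n-b_n)\bigr)=b_n$ in $\Z[[t]]$ (via the factorization $x_{n+1}=b_n\bigl(1+2^n(a_n+b_n)\bigr)$), whereas the paper computes $\lambda_t(\psi-\fdiag{a}\psi)$ directly as a product and checks a pair of even/odd identities in $GW(K)[[t]]$.
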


\begin{proof}
  According to lemma \ref{lem_lambda_compo}, $(\pi_n)_t$ does define
  a pre-$\lambda$-ring structure on $GW(K)$. We show the statement about
  Grothendieck-Pfister elements by induction on $n$. For $n=1$, the statement
  is equivalent to lemma \ref{lem_lambda_gpfis} since for any $\phi\in \hat{\Pf}_1(K)$,
  $\lambda_t(\phi) = 1 + \phi x_1(t)$ and $h_1 = x_1^{\circ -1}$.

  Suppose the statement holds until $n\in \N^*$. Let $\phi\in \hat{\Pf}_{n+1}(K)$,
  and write $\phi=\gpfis{a}\psi$ with $a\in K^*$ and $\psi\in \hat{\Pf}_n(K)$.
  We then need to show $\lambda_{h_{n+1}(t)}(\phi)=1+\phi t$, which is equivalent to
  \[ \lambda_t(\gpfis{a}\psi) = 1 + \gpfis{a}\psi x_{n+1}(t). \]

  Note that for any $x\in \hat{I}(K)$, we have $-\fdiag{a}x = \fdiag{-a}x$,
  which implies that $\lambda^d(-\fdiag{a}x) = (-1)^d\fdiag{a^d}\lambda^d(x)$ for
  any $d\in \N$, and thus $\lambda_t(-\fdiag{a}x)=\lambda_{-\fdiag{a}t}(x)$.
  Therefore we have in $GW[[t]]$:
  \begin{align*}
     \lambda_t(\psi - \fdiag{a}\psi) &= \lambda_t(\psi)\lambda_{-\fdiag{a}t}(\psi) \\
           &= (1 + \psi x_n(t))(1 + \psi x_n(-\fdiag{a}t)) \\
           &= 1 + \psi\left( x_n(t) + x_n(-\fdiag{a}t) + 2^nx_n(t)x_n(-\fdiag{a}t)\right).
  \end{align*}
  Thus we can conclude if we show that
  \[  x_n(t) + x_n(-\fdiag{a}t) + 2^nx_n(t)x_n(-\fdiag{a}t) = (1-\fdiag{a})x_{n+1}(t). \]
  If we decompose in even and odd parts, this amounts to
  \[ \left\{ \begin{array}{l}
    a_n(t) + a_n(t) + 2^n(a_n(t)^2 -\fdiag{a}b_n(t)^2) = (1-\fdiag{a})a_{n+1}(t) \\
    b_n(t) -\fdiag{a}b_n(t) + 2^n(b_n(t)a_n(t) - \fdiag{a}a_n(t)b_n(t)) = (1-\fdiag{a})b_{n+1}(t),
  \end{array} \right. \]
  which are consequences of lemma \ref{lem_a_b_h}.
\end{proof}

\begin{rem}
  Those are \emph{not} $\lambda$-ring structures; for instance, $\pi_n^d(1)\neq 0$
  for $d\pgq 2$.
\end{rem}

\begin{coro}\label{cor_pi_sum}
  Let $n\in \N^*$, and $\phi_1,\dots,\phi_r\in \hat{\Pf}_n(K)$. Then:
  \[ \pi_n^d\left( \sum_{i=1}^r \phi_i \right) = \sum_{1\ppq i_1<\dots <i_d\ppq r} \phi_{i_1}\dots \phi_{i_d}. \]
  In particular, $\pi_n^d(\hat{I}^n(K))\subset \hat{I}^{nd}(K)$, and
  $\pi_n^d$ is zero on forms that are sums of $d-1$ (or less)
  $n$-fold Grothendieck-Pfister elements.
\end{coro}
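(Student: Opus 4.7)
The plan is to exploit the fact that, by Theorem \ref{thm_pi}, $(\pi_n)_t = \lambda_{h_n(t)}$ defines a pre-$\lambda$-structure on $GW(K)$, so in particular it is a group morphism from $(GW(K),+)$ to $(\Lambda(GW(K)),\cdot)$, and moreover $(\pi_n)_t(\phi) = 1 + \phi t$ for every $\phi \in \hat{\Pf}_n(K)$. All three assertions of the corollary will fall out of this by manipulating generating series.

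For the explicit formula, I would simply compute
\[ (\pi_n)_t\!\left( \sum_{i=1}^r \phi_i \right) = \prod_{i=1}^r (\pi_n)_t(\phi_i) = \prod_{i=1}^r (1 + \phi_i t) = \sum_{d=0}^r e_d(\phi_1,\dots,\phi_r)\, t^d, \]
where $e_d$ denotes the $d$-th elementary symmetric polynomial, and then read off the coefficient of $t^d$ to obtain the stated identity. The vanishing on positive sums of at most $d-1$ Grothendieck-Pfister elements is then immediate, since $e_d$ in fewer than $d$ variables is $0$.

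For the inclusion $\pi_n^d(\hat{I}^n(K)) \subset \hat{I}^{nd}(K)$, the extra observation is that $\hat I^n(K)$ is generated as an abelian group by $\hat{\Pf}_n(K)$, so any $x \in \hat I^n(K)$ may be written $x = \sum_i \epsilon_i \phi_i$ with $\epsilon_i \in \{\pm 1\}$ and $\phi_i \in \hat{\Pf}_n(K)$. Then
\[ (\pi_n)_t(x) = \prod_i (1 + \phi_i t)^{\epsilon_i}, \]
and using $(1+\phi_i t)^{-1} = \sum_{k\pgq 0} (-1)^k \phi_i^k t^k$ together with $\phi_i^k \in \hat{I}^{nk}(K)$, one sees that the coefficient of $t^d$ in the product is a $\Z$-linear combination of monomials $\prod_i \phi_i^{k_i}$ with $\sum_i k_i = d$, each of which lies in $\hat{I}^{nd}(K)$. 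There is no substantive obstacle in this argument: the corollary is essentially unpacking Theorem \ref{thm_pi} through the product expansion of $(\pi_n)_t$, the only mild point being that negative contributions in the $\Z$-linear combination are harmless because powers of Grothendieck-Pfister elements stay in the right piece of the filtration.
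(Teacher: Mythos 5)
Your proposal is correct and follows essentially the same route as the paper. The paper likewise reduces both claims to the fact that $(\pi_n)_t$ is a group morphism to $\Lambda(GW(K))$ with $(\pi_n)_t(\phi)=1+\phi t$ for $\phi\in\hat{\Pf}_n(K)$: the formula comes from the product expansion $\prod_i(1+\phi_i t)$ (which the paper phrases as ``an easy induction'' in the style of exterior powers of diagonal forms), and the inclusion $\pi_n^d(\hat{I}^n)\subset\hat{I}^{nd}$ is obtained from $(\pi_n)_t(x)=(\pi_n)_t(x_1)\cdot((\pi_n)_t(x_2))^{-1}$ after writing $x=x_1-x_2$ with $x_i$ sums of Grothendieck--Pfister elements. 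Your treatment of the inverse factors via the geometric series for $(1+\phi_i t)^{-1}$ and the observation that $\phi_i^k\in\hat{I}^{nk}(K)$ is just a slightly more explicit version of the paper's ``it is easy to see'' step, and is entirely valid.
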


\begin{proof}
  The formula is proved by an easy induction, exactly similar to the proof
  of the formula for exterior powers of diagonal quadratic forms (or
  more generally $\lambda$-powers of a sum of elements of dimension 1
  in any pre-$\lambda$-ring). If $x\in \hat{I}^n(K)$, then $x=x_1-x_2$
  where the $x_i$ are sums of elements of $\hat{\Pf}_n(K)$, and
  $(\pi_n)_t(x) = (\pi_n)_t(x_1)\cdot ((\pi_n)_t(x_2))^{-1}$. Now it is easy
  to see that since the degree $d$ coefficient of $(\pi_n)_t(x_i)$ is
  in $\hat{I}^{nd}(K)$, then the same is true for $(\pi_n)_t(x)$.
\end{proof}

Note that the formula in corollary \ref{cor_pi_sum} is not enough to completely
describe $\pi_n^d$ on $\hat{I}^n(K)$, even if we could show directly that
it is well-defined (which is possible using the presentation of $I^n(K)$
given in \cite[42.4]{EKM}), since not every element of $I^n(K)$ is a sum
of Pfister forms.

The idea of similar ``divided power'' operations on related structures
such as Milnor K-theory of Galois cohomology has been around for some
time (see section \ref{sec_operations} for more details).

\section{The fundamental invariants}

We will now use these various pre-$\lambda$-ring structures on $GW(K)$
to define some invariants of $I^n$.

\begin{defi}
  Let $n\in \N^*$ and $d\in \N$. Then we define
  \[ f_n^d: I^n(K) \Isom \hat{I}^n(K) \xrightarrow{\pi_n^d} \hat{I}^{nd}(K) \Isom I^{nd}(K)
  \xrightarrow{f_{nd}} A^{\pgq nd}(K). \]
  
  If $A=W$, then we will sometimes write $f_n^d=\bar{\pi}_n^d$.
  
  If $A=H$, then we will sometimes write $f_n^d = u_{nd}^{(n)}$.
\end{defi}

This is well-defined according to corollary \ref{cor_pi_sum}.
The notation $u_{nd}^{(n)}$ may seem dissonant with the rest, but we chose to stick with
the tradition to write the degree of cohomological invariants in the
index, and the exponent serves to distinguish between, for instance,
$u_6^{(2)}: I^2(K)\to H^6(K,\mu_2)$ and $u_6^{(3)}: I^3(K)\to H^6(K,\mu_2)$,
which are completely different ($u_6^{(3)}$ is \emph{not} the restriction
of $u_6^{(2)}$ to $I^3$).

\begin{prop}\label{prop_f}
  Let $n\in \N^*$. Then for any $d\in \N$, we have $f_n^d\in M^{\pgq nd}(n)$,
  and $(f_n^d)_{d\in \N}$ is the only family of elements of
  $M(n)$ such that:
  \begin{enumerate}[label=(\roman*)]
  \item $f_n^0 = 1$ and $f_n^1 = f_n$;
  \item for all $q,q'\in I^n(K)$:
    \[ f_n^d(q+q') = \sum_{k=0}^d f_n^k(q)\cdot f_n^{d-k}(q') ; \]
  \item for all $\phi\in \Pf_n(K)$ and $d\pgq 2$, $f_n^d(\phi)=0$.
  \end{enumerate}
  Furthermore, for any $\phi\in \Pf_n(K)$ and any $d\in \N^*$:
  \begin{equation}\label{eq_fn_moins_phi}
    f_n^d(-\phi) = (-1)^d\{-1\}^{n(d-1)}f_n(\phi).
  \end{equation}
\end{prop}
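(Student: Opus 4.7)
The statement splits into three independent tasks: existence with the listed properties, uniqueness, and the sign formula. For the first, corollary~\ref{cor_pi_sum} ensures $\pi_n^d$ lands in $\hat{I}^{nd}(K)$, so the composition defining $f_n^d$ is well-defined and lies in $A^{\pgq nd}(K)$, with naturality in $K$ automatic. Properties (i) and (ii) translate directly from the pre-$\lambda$-ring axioms of $\pi_n$, once equation~(\ref{eq_prod_fn}) is used to rewrite $f_{nk}\cdot f_{n(d-k)} = f_{nd}$ and match the $A$-algebra structure; property (iii) is exactly the vanishing statement of theorem~\ref{thm_pi}.

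For uniqueness I would proceed by induction on $d$. The cases $d=0,1$ are covered by (i). For $d\pgq 2$, let $(g_n^d)$ be another family satisfying (i)--(iii) and set $h = g_n^d - f_n^d$. Subtracting the two instances of (ii) and applying the induction hypothesis to cancel every cross-term with $1\ppq k\ppq d-1$ leaves $h(q+q') = h(q) + h(q')$, so $h$ is a group morphism $I^n(L) \to A(L)$ over every extension $L/k$. Condition (iii) forces $h$ to vanish on $\Pf_n(L)$, and since $I^n(L)$ is additively generated by $n$-fold Pfister forms, this gives $h \equiv 0$, hence $g_n^d = f_n^d$.

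For the sign formula I would once more induct on $d\pgq 1$. The case $d=1$ reduces to $f_n(-\phi) = -f_n(\phi)$, which holds because $f_n$ is a group morphism. For $d\pgq 2$, evaluate (ii) at $q=\phi,\, q'=-\phi$: the left-hand side vanishes because $(\pi_n)_t$ is a group morphism to $\Lambda(GW(K))$ sending $0$ to $1$, so $f_n^d(0)=0$, while (iii) kills every summand involving $f_n^k(\phi)$ with $k\pgq 2$. The surviving identity is
\[ 0 = f_n^d(-\phi) + f_n(\phi)\cdot f_n^{d-1}(-\phi), \]
and the induction hypothesis reduces the whole problem to evaluating $f_n(\phi)^2$. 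This last step is the main obstacle, since the computation must be carried out uniformly for $A=W$ and $A=H$; I would invoke equation~(\ref{eq_prod_fn}) together with the identity $\phi^2 = \pfis{-1}^n\phi$ recalled in the preliminaries to conclude $f_n(\phi)^2 = f_{2n}(\pfis{-1}^n\phi) = \{-1\}^n f_n(\phi)$, at which point the induction closes immediately.
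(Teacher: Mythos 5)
Your proof is correct and follows essentially the same route as the paper: existence and properties (i)--(iii) read off from theorem~\ref{thm_pi} and equation~(\ref{eq_prod_fn}), uniqueness from the additive generation of $I^n$ by Pfister forms via (ii) and (iii), and the sign formula by the identical induction on $d$ using $0=f_n^d(\phi-\phi)$ and $f_n(\phi)^2=\{-1\}^n f_n(\phi)$.
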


\begin{proof}
  The fact that $f_n^d$ is an invariant is clear by construction:
  the definition of $\pi_n^d$ is made in terms of the exterior powers,
  which are of course compatible with field extensions, and the expression
  of the $\pi_n^d$ in terms of the $\lambda^d$ is given by a universal
  $h_n\in \Z[[t]]$.

  The three properties are direct consequences of theorem \ref{thm_pi},
  after applying $f_{nd}$ to the corresponding formulas for $\pi_n^d$
  (and using formula (\ref{eq_prod_fn})).

  The last formula on opposites of Pfister forms can be easily proved
  by induction using
    \[ 0 = f_n^d(\phi-\phi) = f_n^d(-\phi) + f_n^{d-1}(-\phi)f_n(\phi). \]
  
  Uniqueness follows from property $(ii)$ and the fact that Pfister forms additively
  generate $I^n(K)$, since the values of $f_n^d$ are fixed on $\pm \phi$
  for any $\phi\in \Pf_n(K)$. 
\end{proof}

As an immediate consequence of either corollary \ref{cor_pi_sum} or proposition \ref{prop_f}:

\begin{coro}\label{cor_sum_f}
  Let $n\in \N^*$, and $\phi_1,\dots,\phi_r\in \Pf_n(K)$. Then:
  \[ f_n^d\left( \sum_{i=1}^r \phi_i \right) =
          \sum_{1\ppq i_1<\dots <i_d\ppq r} f_n(\phi_{i_1})\dots f_n(\phi_{i_d}). \]
  In particular, $f_n^d$ is zero on forms that are sums of $d-1$ or less
  $n$-fold Pfister forms.
\end{coro}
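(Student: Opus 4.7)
The plan is to give a short induction on $r$ using only the three characterizing properties of $f_n^d$ from Proposition \ref{prop_f}, which is arguably cleaner than pushing Corollary \ref{cor_pi_sum} through the map $f_{nd}$ (though that route works as well, using equation (\ref{eq_prod_fn}) to translate products in $\hat I^{nd}$ into products in $A^{\pgq nd}$).

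For the base case $r=1$, properties (i) and (iii) of Proposition \ref{prop_f} give $f_n^0(\phi_1)=1$, $f_n^1(\phi_1)=f_n(\phi_1)$, and $f_n^d(\phi_1)=0$ for $d\pgq 2$, matching the right-hand side (which is an empty sum when $d\pgq 2$, since one cannot choose $d$ distinct indices from a single-element set).

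For the inductive step, assume the formula holds for $r-1$ Pfister forms. Apply property (ii) with $q=\sum_{i=1}^{r-1}\phi_i$ and $q'=\phi_r$:
\[ f_n^d\Bigl(\sum_{i=1}^r \phi_i\Bigr) = \sum_{k=0}^d f_n^k\Bigl(\sum_{i=1}^{r-1}\phi_i\Bigr)\cdot f_n^{d-k}(\phi_r). \]
By property (iii), only the terms with $d-k\in\{0,1\}$ survive, so this reduces to
\[ f_n^d\Bigl(\sum_{i=1}^{r-1}\phi_i\Bigr) + f_n^{d-1}\Bigl(\sum_{i=1}^{r-1}\phi_i\Bigr)\cdot f_n(\phi_r). \]
Plugging in the induction hypothesis yields
\[ \sum_{1\ppq i_1<\dots<i_d\ppq r-1} f_n(\phi_{i_1})\cdots f_n(\phi_{i_d}) + \sum_{1\ppq i_1<\dots<i_{d-1}\ppq r-1} f_n(\phi_{i_1})\cdots f_n(\phi_{i_{d-1}})\cdot f_n(\phi_r), \]
which is exactly the announced sum, partitioned according to whether $r$ appears among the indices $i_1<\dots<i_d$ or not.

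For the ``in particular'' claim, if $r\ppq d-1$ then there is no way to pick $d$ strictly increasing indices from $\{1,\dots,r\}$, so the right-hand side is an empty sum, hence zero. There is essentially no obstacle here: the inductive argument is purely formal, mimicking the standard derivation of $\lambda^d(\sum x_i) = e_d(x_1,\dots,x_r)$ for degree-$1$ elements in any pre-$\lambda$-ring, and the only content beyond bookkeeping is the vanishing on Pfister forms, which is supplied by property (iii).
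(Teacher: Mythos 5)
Your proof is correct and takes one of the two routes the paper itself points to (it says the corollary is an immediate consequence of either Corollary \ref{cor_pi_sum} or Proposition \ref{prop_f}); you simply write out the induction via the three axioms in Proposition \ref{prop_f}, which is exactly the ``$\lambda$-powers of a sum of line elements'' argument the paper alludes to.
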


\section{The shifting operator}

Since $I^n(K)$ is additively generated by the $n$-fold Pfister forms, it is natural
to study how the invariants behave under adding or subtracting a Pfister form.

\begin{propdef}
  Let $n\in \N^*$ and $\eps=\pm1$. There is a unique morphism of filtered $A(k)$-modules
  $\Phi_n^\eps: M(n)\to M(n)[-n]$ such that
  \begin{equation*}
    \alpha(q + \eps\phi) = \alpha(q) + \eps f_n(\phi)\cdot \Phi_n^\eps(\alpha)(q)
  \end{equation*}
  for all $\alpha\in M(n)$, $q\in I^n(K)$ and $\phi\in \Pf_n(K)$. 
\end{propdef}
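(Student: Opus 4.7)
The plan is to produce $\Phi_n^\eps(\alpha)(q)$ by applying Lemma~\ref{lem_a_inv} to the $K$-invariant of $\Pf_n$ obtained from differencing $\alpha$ along Pfister directions, and then to extract all remaining properties (naturality, $A(k)$-linearity, compatibility with the filtration, and uniqueness of $\Phi_n^\eps$) from Lemma~\ref{lem_a_filtr}.

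For the construction, I fix $\alpha\in M(n)$, an extension $K/k$, and $q\in I^n(K)$, and define an invariant $\beta_{\alpha,q}:\Pf_n\to A$ over $K$ by $\beta_{\alpha,q}(\phi) = \alpha(q+\eps\phi) - \alpha(q)$ (for $\phi\in\Pf_n(L)$ with $L/K$). Naturality of $\alpha$ makes this an invariant of $\Pf_n$, and it is normalized, since $\pfis{1,\ldots,1}=0$ is the distinguished Pfister form and $f_n(0)=0$ (because $f_n$ is a group morphism and $\pfis{1}=0$). Lemma~\ref{lem_a_inv} then produces a unique $c\in A(K)$ with $\beta_{\alpha,q}=c\cdot f_n$, and I set $\Phi_n^\eps(\alpha)(q):=\eps c$; by construction this satisfies the characterizing identity.

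I then verify the required properties in turn. \emph{Naturality in $q$:} given $K\subset K'$, the invariants $\beta_{\alpha,q_{K'}}$ and the restriction of $\beta_{\alpha,q}$ agree over $K'$, so the uniqueness clause in Lemma~\ref{lem_a_inv} forces $\Phi_n^\eps(\alpha)(q)_{K'}=\Phi_n^\eps(\alpha)(q_{K'})$, and hence $\Phi_n^\eps(\alpha)\in M(n)$. \emph{$A(k)$-linearity in $\alpha$:} immediate from the defining formula combined with the uniqueness of $\Phi_n^\eps(\alpha)$. \emph{Filtration bound:} if $\alpha\in M^{\pgq d}(n)$, then $f_n(\phi)\cdot\Phi_n^\eps(\alpha)(q)_L\in A^{\pgq d}(L)$ for every $L/K$ and $\phi\in\Pf_n(L)$, so Lemma~\ref{lem_a_filtr} (applied with $d-n$ in place of $d$) gives $\Phi_n^\eps(\alpha)(q)\in A^{\pgq d-n}(K)$; by the shift convention $(M(n)[-n])^{\pgq d}=M^{\pgq d-n}(n)$, this is exactly the required bound. \emph{Uniqueness of $\Phi_n^\eps$:} any other candidate $\Psi$ must satisfy $f_n(\phi)\cdot(\Phi_n^\eps(\alpha)(q)-\Psi(\alpha)(q))=0$ for all $\phi\in\Pf_n(L)$ and all $L/K$, so the second part of Lemma~\ref{lem_a_filtr} forces $\Phi_n^\eps=\Psi$.

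I expect no real obstacle: the conceptual point is recognizing that Lemma~\ref{lem_a_inv}, applied pointwise in $q$, is precisely what parametrizes the first-order variation of $\alpha$ along Pfister directions, after which the two preliminary lemmas do all the work. The only steps requiring care are checking that $\beta_{\alpha,q}$ is normalized (so that its decomposition has no constant term) and correctly applying the shift convention in the filtration step.
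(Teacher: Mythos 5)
Your proof is correct and follows essentially the same route as the paper: the paper applies Lemma~\ref{lem_a_inv} directly to $\phi\mapsto\alpha(q+\eps\phi)$ and reads off the constant term as $\alpha(q)$, whereas you first normalize by subtracting $\alpha(q)$, but this is the same decomposition. The naturality, $A(k)$-linearity, filtration, and uniqueness arguments all match the paper's (uniqueness via the second part of Lemma~\ref{lem_a_filtr}, or equivalently via the uniqueness clause in Lemma~\ref{lem_a_inv}).
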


\begin{proof}
  Let $\alpha\in M(n)$ and $q\in I^n(K)$. For any extension $L/K$
  and any $\phi\in \Pf_n(L)$, we set
  \[ \beta_q(\phi) = \alpha(q + \eps\phi). \]

  Then $\beta_q\in \Inv(\Pf_n,A)$, defined over $K$. According to
  lemma \ref{lem_a_inv}, there are uniquely determined $x_q,y_q\in A(K)$
  such that $\beta_q = x_q + y_q\cdot f_n$.

  Taking $\phi=0$ we see that $x_q = \alpha(q)$, and we then set
  $\Phi_n^\eps(\alpha)(q) = \eps y_q$, which gives the expected formula, as well as
  the uniqueness of $\Phi_n^\eps$.

  By definition, $\Phi_n^\eps$ is clearly a $A(k)$-module morphism,
  and it is of degree $-n$ because if $\alpha\in M^{\pgq d}(n)$, then for any
  $q\in I^n(K)$, $f_n(\phi)\cdot \alpha^\eps(q)\in A^{\pgq d}(L)$ for all $\phi\in Pf_n(L)$
  and any extension $L/K$, thus $\alpha^\eps(q)\in A^{\pgq d-n}(K)$ by lemma
  \ref{lem_a_filtr}.
\end{proof}

We will often write $\Phi^+=\Phi_n^{+1}$ and $\Phi^-=\Phi_n^{-1}$, as
there is in practice no confusion to what $n$ is in the context. We also
write $\alpha^+ = \Phi^+(\alpha)$ and $\alpha^- = \Phi^-(\alpha)$ for
any $\alpha\in M(n)$. These two operators have natural links between
each other:

\begin{prop}\label{prop_phi_pm}
  Let $n\in \N^*$. The operators $\Phi_n^+$ et $\Phi_n^-$ commute, and furthermore
  for any $\alpha\in M(n)$ we have: 
  \[ \alpha^+ - \alpha^- = \{-1\}^n \alpha^{+-} = \{-1\}^n \alpha^{-+}. \]
\end{prop}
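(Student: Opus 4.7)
The plan is to exploit the defining relation of $\Phi_n^\varepsilon$ by applying $\alpha$ to well-chosen elements in two different orders, and then use lemma \ref{lem_a_filtr} (non-degeneracy of multiplication by $f_n(\phi)$) to extract scalar identities from identities multiplied by $f_n(\phi)$ or $f_n(\phi)f_n(\psi)$.

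For commutativity, I would fix $q \in I^n(K)$ and, for an extension $L/K$, pick two Pfister forms $\phi,\psi \in \Pf_n(L)$, then expand $\alpha(q+\phi-\psi)$ in two ways. Adding $\phi$ first and subtracting $\psi$ second gives
\[ \alpha(q+\phi-\psi) = \alpha(q) + f_n(\phi)\alpha^+(q) - f_n(\psi)\alpha^-(q) - f_n(\phi)f_n(\psi)\alpha^{-+}(q), \]
using $\alpha^-(q+\phi) = \alpha^-(q) + f_n(\phi)\alpha^{-+}(q)$ from the definition of $\Phi^+$ applied to $\alpha^-$. Performing the operations in the opposite order yields the same expression with $\alpha^{+-}(q)$ in place of $\alpha^{-+}(q)$. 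Subtracting the two gives $f_n(\phi)f_n(\psi)[\alpha^{+-}(q) - \alpha^{-+}(q)] = 0$ for all $\phi,\psi \in \Pf_n(L)$ and every $L/K$; applying lemma \ref{lem_a_filtr} twice (once in $\psi$, once in $\phi$) over all such $L$, I conclude $\alpha^{+-}(q)=\alpha^{-+}(q)$, which is commutativity.

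For the second identity I would expand $\alpha(q) = \alpha\bigl((q+\phi)-\phi\bigr)$ via both operators:
\[ \alpha(q) = \alpha(q) + f_n(\phi)\bigl[\alpha^+(q)-\alpha^-(q)\bigr] - f_n(\phi)^2\,\alpha^{-+}(q), \]
so that $f_n(\phi)\bigl[\alpha^+(q)-\alpha^-(q)\bigr] = f_n(\phi)^2\,\alpha^{-+}(q)$ for every $\phi \in \Pf_n(L)$. The key algebraic input, and the only step that is not purely formal, is the identity $f_n(\phi)^2 = \{-1\}^n f_n(\phi)$; this follows by combining $\phi^2 = 2^n\phi = \pfis{-1}^n \phi$ (recalled in the preliminaries) with the multiplicativity formula (\ref{eq_prod_fn}). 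Substituting and invoking lemma \ref{lem_a_filtr} once more yields $\alpha^+(q)-\alpha^-(q) = \{-1\}^n\alpha^{-+}(q)$, which together with the commutativity proved above is the full statement. The main obstacle is keeping track of signs and of the correct inner/outer application of the defining relation; the one genuinely non-formal ingredient is the squaring identity for $f_n(\phi)$, which produces the $\{-1\}^n$ factor.
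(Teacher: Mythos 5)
Your proof is correct and takes essentially the same route as the paper: expand $\alpha(q+\phi-\psi)$ in two orders to get commutativity via lemma \ref{lem_a_filtr}, then specialize $\psi=\phi$ (your expansion of $\alpha((q+\phi)-\phi)$ is exactly this case) and use the identity $f_n(\phi)^2=\{-1\}^n f_n(\phi)$ together with the same lemma. The only cosmetic difference is that you write the result with $\alpha^{-+}$ while the paper writes $\alpha^{+-}$; these agree by the commutativity just proved.
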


\begin{proof}
  Let $q\in I^n(K)$ and $\phi,\psi\in \Pf_n(L)$.
  We have
  \begin{align*}
    \alpha(q+\phi-\psi) &= \alpha(q+\phi) - f_n(\psi) \alpha^-(q+\phi) \\
                        &= \alpha(q) + f_n(\phi)\alpha^+(q) -f_n(\psi)\alpha^-(q)
                          -f_n(\phi)f_n(\psi)\alpha^{-+}(q)
  \end{align*}
  but also
  \begin{align*}
    \alpha(q+\phi-\psi) &= \alpha(q-\psi) + f_n(\phi) \alpha^+(q-\psi) \\
                        &= \alpha(q) - f_n(\psi)\alpha^-(q) +f_n(\phi)\alpha^+(q)
                          -f_n(\phi)f_n(\psi)\alpha^{+-}(q)
  \end{align*}
  thus $f_n(\phi)f_n(\psi)\alpha^{-+}(q) = f_n(\phi)f_n(\psi)\alpha^{+-}(q)$,
  and since this holds for any $\phi$, $\psi$ over any extension,
  by lemma \ref{lem_a_filtr} we find $\alpha^{+-}=\alpha^{-+}$.

  If we now take $\phi=\psi$, the above formula gives
  \[ f_n(\phi)\alpha^+(q) - f_n(\phi)\alpha^-(q) = f_n(\phi)f_n(\phi)\alpha^{+-}(q) \]
  which allows to conclude, using $f_n(\phi)f_n(\phi) = \{-1\}^nf_n(\phi)$
  and again \ref{lem_a_filtr}.
\end{proof}

In view of this proposition, we may write $\alpha^{r+,s-}\in M(n)$ for
any $\alpha\in M(n)$ and $r,s\in \N$, defined as applying $r$ times $\Phi^+$ to
$\alpha$, and $s$ times $\Phi^-$, in any order.

We also call $\Phi=\Phi^+$ the \emph{shifting operator}, as justified by the
following elementary result:

\begin{prop}\label{prop_phi_pi}
  Let $n\in \N^*$. For any $d\in \N$, $\Phi(f_n^{d+1})=f_n^d$ (and $\Phi(f_n^0)=0$).
\end{prop}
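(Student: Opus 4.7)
The plan is to obtain both assertions directly from properties (ii) and (iii) in Proposition \ref{prop_f}, using the defining formula of the shifting operator and its uniqueness.

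First I would dispatch the case $d=-1$, i.e.\ $\Phi(f_n^0)=0$: since $f_n^0$ is the constant invariant $1$, we have $f_n^0(q+\phi)-f_n^0(q)=0$ for every $q\in I^n(K)$ and every $\phi\in \Pf_n(K)$, so $f_n(\phi)\cdot \Phi(f_n^0)(q)=0$ identically. By Lemma \ref{lem_a_filtr} this forces $\Phi(f_n^0)(q)=0$, which matches the uniqueness clause in the definition of $\Phi$.

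For the main claim, fix $d\in \N$, $q\in I^n(K)$ and $\phi\in \Pf_n(K)$. Property (ii) of Proposition \ref{prop_f} yields
\[
f_n^{d+1}(q+\phi) = \sum_{k=0}^{d+1} f_n^k(q)\cdot f_n^{d+1-k}(\phi).
\]
By property (iii), $f_n^{j}(\phi)=0$ for every $j\pgq 2$; together with $f_n^0(\phi)=1$ and $f_n^1(\phi)=f_n(\phi)$, only the terms $k=d+1$ and $k=d$ survive. This collapses the sum to
\[
f_n^{d+1}(q+\phi) = f_n^{d+1}(q) + f_n(\phi)\cdot f_n^d(q).
\]
Comparing with the characteristic identity $\alpha(q+\phi)=\alpha(q)+f_n(\phi)\cdot \Phi(\alpha)(q)$ and invoking the uniqueness of $\Phi(f_n^{d+1})$ provided by the proposition-definition, we conclude $\Phi(f_n^{d+1})=f_n^d$.

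There is really no obstacle: everything is immediate from Proposition \ref{prop_f}, and the argument is essentially the same combinatorial collapse that shows a divided-power ``derivative'' lowers the index by one. The only care needed is to remember that Proposition \ref{prop_f}(iii) applies only to genuine Pfister forms (not their opposites), which is exactly the range on which $\Phi$ is defined.
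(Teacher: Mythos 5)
Your argument is exactly the one the paper uses, just spelled out in more detail: the paper's proof also reduces to the identity $f_n^{d+1}(q+\phi)=f_n^{d+1}(q)+f_n(\phi)f_n^d(q)$ and cites Proposition \ref{prop_f} for it, precisely the collapse of the binomial-type sum you carry out. The treatment of $\Phi(f_n^0)=0$ and the appeal to uniqueness of $\Phi$ are the natural fillers for the steps the paper leaves implicit.
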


\begin{proof}
  We need to show $f_n^{d+1}(q+\phi)=f_n^{d+1}(q) + f_n(\phi)\cdot f_d^d(q)$,
  which is an immediate consequence of proposition \ref{prop_f}.
\end{proof}

The action of $\Phi^-$ on the $f_n^d$ is more complicated,
reflecting the fact that $f_n^d$ behaves very nicely with respect
to sums of Pfister forms, but quite poorly for difference of
those.

\begin{prop}\label{prop_fn_phi_moins}
  Let $n,d\in \N^*$. Then
  \[  (f_n^d)^- = \sum_{k=0}^{d-1} (-1)^{d-k-1} \{-1\}^{n(d-k-1)} f_n^k. \]
\end{prop}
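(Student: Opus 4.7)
The plan is to prove the formula by induction on $d \geq 1$, relying on the two key facts already at our disposal: proposition \ref{prop_phi_pi} gives $\Phi^+(f_n^d) = f_n^{d-1}$, and proposition \ref{prop_phi_pm} yields the identity $\alpha^- = \alpha^+ - \{-1\}^n (\alpha^+)^-$ valid for any $\alpha \in M(n)$. Applying the latter with $\alpha = f_n^d$ and using the former twice (once for $(f_n^d)^+$ and once inside the nested shift) produces the compact recursion
\[ (f_n^d)^- = f_n^{d-1} - \{-1\}^n (f_n^{d-1})^-. \]
Everything afterwards is bookkeeping on this recursion.

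For the base case $d = 1$, observe that $f_n^0 = 1$ is a constant invariant, so directly from the definition of $\Phi^-$ we have $(f_n^0)^- = 0$. Plugging this into the recursion gives $(f_n^1)^- = f_n^0 = 1$, which matches the claimed formula (it reduces to the single term $k=0$, with sign $(-1)^0$ and factor $\{-1\}^0$).

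For the inductive step, assume
\[ (f_n^{d-1})^- = \sum_{k=0}^{d-2} (-1)^{d-k-2} \{-1\}^{n(d-k-2)} f_n^k. \]
Multiplying by $-\{-1\}^n$ turns the exponent $n(d-k-2)$ into $n(d-k-1)$ and flips every sign, so each term becomes $(-1)^{d-k-1} \{-1\}^{n(d-k-1)} f_n^k$. Adding the standalone term $f_n^{d-1}$ from the recursion supplies exactly the missing index $k=d-1$ contribution (since $(-1)^0 \{-1\}^0 = 1$), and we recover the claimed expression indexed from $k=0$ to $k=d-1$.

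I do not anticipate a real obstacle: the only thing to be careful about is the sign convention in the recursion (in particular, that one shift of $\{-1\}^n$ converts $-1$ factors in the right order) and the verification that $(f_n^0)^- = 0$. Both are immediate. The whole argument is essentially a one-line recursion followed by a routine sign check.
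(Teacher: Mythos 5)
Your proof is correct. The recursion $(f_n^d)^- = f_n^{d-1} - \{-1\}^n (f_n^{d-1})^-$, obtained by combining proposition \ref{prop_phi_pm} with $\Phi^+(f_n^d) = f_n^{d-1}$, is exactly right, the base case is handled properly (the constant invariant $f_n^0 = 1$ is killed by $\Phi^-$), and the inductive bookkeeping on signs and powers of $\{-1\}$ checks out.

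The route is genuinely different from the paper's. The paper proceeds by direct computation: it expands $f_n^d(q - \phi) = \sum_k f_n^k(q) f_n^{d-k}(-\phi)$ using the binomial-type sum from proposition \ref{prop_f}, then substitutes formula \eqref{eq_fn_moins_phi} for $f_n^{d-k}(-\phi)$ (which was itself established by a short induction inside the proof of \ref{prop_f}) and reads off the coefficient of $f_n(\phi)$. Your argument instead stays entirely at the level of the operators $\Phi^\pm$, using proposition \ref{prop_phi_pm} to convert the known action of $\Phi^+$ on the $f_n^d$ into a one-step recursion for $\Phi^-$, and then unwinds that recursion by induction. Your approach is arguably cleaner conceptually because it makes explicit that the result follows formally from $\Phi^+(f_n^d) = f_n^{d-1}$ and the commutation relation, with no need to evaluate anything on Pfister forms; the paper's proof is shorter on the page because it bundles the induction into the already-proved formula \eqref{eq_fn_moins_phi}. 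Both are fine; your version makes the dependence on proposition \ref{prop_phi_pm} visible where the paper's proof does not use it.
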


\begin{proof}
  Let $q\in I^n(K)$ and $\phi\in \Pf_n(K)$. Then
  \begin{align*}
    f_n^d(q-\phi) &= \sum_{k=0}^df_n^k(q)f_n^{d-k}(-\phi) \\
                  &= f_n^d(q) + \sum_{k=0}^{d-1} (-1)^{d-k}\{-1\}^{n(d-k-1)}f_n(\phi)f_n^k(q)
  \end{align*}
  using formula (\ref{eq_fn_moins_phi}).
\end{proof}

Apart from its action on the $f_n^d$, the main property of
$\Phi_n^\eps$ is the following:

\begin{prop}\label{prop_ker_phi}
  Let $n\in \N^*$ and $\eps=\pm 1$. The morphism $\Phi_n^\eps$ induces for any $d\in \N$
  an exact sequence:
  \[ 0\To A(k)/A^{\pgq d+n}(k) \To M(n)/M^{\pgq d+n}(n)\xrightarrow{\Phi_n^\eps} M(n)/M^{\pgq d}(n). \]
  In particular, the kernel of $\Phi_n^\eps$ is the submodule of constant
  invariants in $M(n)$.
\end{prop}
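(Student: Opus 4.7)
The plan is to verify the three exactness claims in turn. For injectivity of $A(k)/A^{\pgq d+n}(k)\to M(n)/M^{\pgq d+n}(n)$, we use that a constant invariant $c\in A(k)\subset M(n)$ lies in $M^{\pgq d+n}(n)$ if and only if $c\in A^{\pgq d+n}(k)$, which follows by evaluating at $0\in I^n(k)$. That constants lie in $\ker\Phi_n^\eps$ modulo any $M^{\pgq d}(n)$ is direct from the defining equation of the proposition-definition: if $\alpha=c$ is constant, then $\alpha(q+\eps\phi)=c=\alpha(q)$, so the uniqueness clause forces $\Phi_n^\eps(c)=0$.

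The substantial content is the reverse inclusion: if $\Phi_n^\eps(\alpha)\in M^{\pgq d}(n)$, we want $\alpha$ to be congruent to a constant modulo $M^{\pgq d+n}(n)$. We set $\alpha':=\alpha - \alpha_k(0)$, which is normalized and satisfies $\Phi_n^\eps(\alpha')=\Phi_n^\eps(\alpha)\in M^{\pgq d}(n)$; the goal becomes $\alpha'\in M^{\pgq d+n}(n)$. From the defining formula, for any extension $L/k$, $q\in I^n(L)$ and $\phi\in \Pf_n(L)$,
\[ \alpha'(q+\eps\phi) - \alpha'(q) = \eps f_n(\phi)\cdot \Phi_n^\eps(\alpha')(q) \in A^{\pgq d+n}(L), \]
and substituting $q-\eps\phi$ for $q$ yields the symmetric relation $\alpha'(q-\eps\phi)-\alpha'(q)\in A^{\pgq d+n}(L)$. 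Hence modulo $A^{\pgq d+n}(L)$, the value $\alpha'(q)$ is unchanged when an $n$-fold Pfister form is added or subtracted. Since $I^n(L)$ is additively generated by $\Pf_n(L)$, writing $q=\sum_{i=1}^r \eps_i\phi_i$ with $\eps_i=\pm 1$ and $\phi_i\in \Pf_n(L)$ and inducting on $r$ yields $\alpha'(q)\equiv \alpha'(0)=0 \pmod{A^{\pgq d+n}(L)}$, as required.

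The ``in particular'' statement is obtained by running the same argument unfiltered: if $\Phi_n^\eps(\alpha)=0$, then $\alpha(q+\eps\phi)=\alpha(q)$ holds exactly, and the same inductive argument using additive generation forces $\alpha(q)=\alpha_k(0)$ for every $q$, so $\alpha$ is constant. There is no genuine obstacle in the proof; the only care needed is sign bookkeeping and the passage between adding and subtracting Pfister forms via the shift substitution $q\mapsto q-\eps\phi$.
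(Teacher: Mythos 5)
Your proof is correct and follows essentially the same route as the paper's: you use the defining equation of $\Phi_n^\eps$ to show that $\alpha(q\pm\eps\phi)\equiv\alpha(q)$ modulo $A^{\pgq d+n}$, then induct on a decomposition of $q$ as a signed sum of $n$-fold Pfister forms, and obtain the unfiltered statement by letting $d\to\infty$ via separatedness of the filtration. The only differences are cosmetic (you spell out the injectivity of $A(k)/A^{\pgq d+n}(k)\to M(n)/M^{\pgq d+n}(n)$ and the normalization $\alpha'=\alpha-\alpha_k(0)$, which the paper leaves implicit).
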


\begin{proof}
  If $\alpha,\beta\in M(n)$ are congruent modulo $M^{\pgq d+n}(n)$, then since
  $\Phi^\eps(M^{\pgq d+n}(n))$ is included in $M^{\pgq d}(n)$, $\alpha^\eps$
  and $\beta^\eps$ are congruent modulo $M^{\pgq d}(n)$.

  Let $\alpha\in M(n)$ such that $\alpha^\eps\in M^{\pgq d}(n)$. Then for any $q\in I^n(K)$
  and any $\phi\in \Pf_n(K)$, we have $\alpha(q + \eps\phi)\equiv \alpha(q)$ modulo
  $A^{\pgq n+d}(K)$, and also by symmetry $\alpha(q - \eps\phi)\equiv \alpha(q)$.
  Since we can always write $q= q_1- q_2$ where the $q_i$ are sums of $n$-fold
  Pfister forms, then by simple induction on the lengths of the sums,
  $\alpha(q)\equiv \alpha(0)$ modulo $A^{\pgq n+d}(K)$
  (where $\alpha(0)$ is seen as a constant invariant).

  Taking a large enough $d$, and since the filtration on $A(K)$ is separated,
  we see that $\alpha^\eps=0$ implies $\alpha = \alpha(0)$.
\end{proof}

\begin{coro}\label{cor_phi_exact}
  Let $n\in \N^*$ and let $\eps=\pm 1$.
  If $M'(n)$ is the submodule of $M(n)$ generated by the $f_n^d$
  for $d\in \N$, then $\Phi_n^\eps$ induces an exact sequence
  of filtered $A(k)$-modules
  \[ 0 \to A(k) \To M'(n) \xrightarrow{\Phi_n^\eps} M'(n)[-n] \To 0. \]
\end{coro}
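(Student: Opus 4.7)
The plan is to verify exactness at each of the three positions. The filtered structure is automatic, since $\Phi_n^\eps$ is already known to be a morphism of filtered $A(k)$-modules of degree $-n$, and $A(k)$ sits inside $M'(n)$ as constant invariants (via $c\mapsto c\cdot f_n^0$) with its natural filtration.

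Exactness at $A(k)$ is immediate: the map $c\mapsto c\cdot f_n^0$ is injective since a nonzero constant is a nonzero invariant. Exactness at $M'(n)$ follows at once from Proposition \ref{prop_ker_phi}, which identifies the kernel of $\Phi_n^\eps$ on all of $M(n)$ as the constant invariants; since $A(k)\subset M'(n)$, the same description holds for the restriction to $M'(n)$.

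The bulk of the work is surjectivity $\Phi_n^\eps(M'(n)) = M'(n)$. For $\eps=+1$ this is immediate from Proposition \ref{prop_phi_pi}: the identity $\Phi^+(f_n^{d+1})=f_n^d$ places every generator of $M'(n)$ in the image, and $A(k)$-linearity handles the rest. For $\eps=-1$ I would exploit the triangular shape supplied by Proposition \ref{prop_fn_phi_moins}, which reads
\[ (f_n^d)^- = f_n^{d-1} + \sum_{k=0}^{d-2} c_{d,k}\, f_n^k, \]
with $c_{d,k}\in A(k)$ (the leading coefficient at $k=d-1$ being $1$). Together with the base case $(f_n^1)^- = f_n^0$, a straightforward induction on $d$ shows $f_n^d\in \Phi^-(M'(n))$ for every $d\in \N$: having already realized $f_n^0,\dots,f_n^{d-1}$ as $\Phi^-$-images, subtract the known lower-order combination from $\Phi^-(f_n^{d+1})$ to recover $f_n^d$.

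The only real obstacle is the $\eps=-1$ surjectivity, and even that reduces to recognizing and exploiting the triangular form given by Proposition \ref{prop_fn_phi_moins}; the induction itself is routine bookkeeping.
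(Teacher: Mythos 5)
Your proposal is correct and follows essentially the same route as the paper: exactness at $A(k)$ and at $M'(n)$ come straight from Proposition \ref{prop_ker_phi}, and surjectivity is settled via Proposition \ref{prop_phi_pi} for $\eps=+1$ and the unitriangular form of Proposition \ref{prop_fn_phi_moins} for $\eps=-1$. You merely spell out the triangular induction that the paper leaves as ``easily implied,'' which is a sound reading of the argument.
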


\begin{proof}
  The only thing left to check is surjectivity, but this is easily
  implied by propositions \ref{prop_phi_pi}
  for $\Phi^+$, and \ref{prop_fn_phi_moins} for $\Phi^-$.
\end{proof}

\begin{rem}\label{rem_shift}
  All this implies that $\Phi$ may be seen as some kind of differential operator:
  if we know $\alpha^+$ for some invariant $\alpha$, we may ``integrate''
  to find $\alpha$, with a certain integration constant.
  Precisely, if $\alpha^+ = \sum a_d f_n^d$, then
  $\alpha = \alpha(0)+\sum a_d f_n^{d+1}$ (and we will show
  in the next section that such a decomposition always holds).
  We will use extensively this method to compute some invariants $\alpha$
  by ``induction on shifting''.
\end{rem}

\section{Classification of invariants}

The main goal of this article, and this section, is to show that any
$\alpha\in M(n)$ can be expressed uniquely as a combination $\sum_d a_d f_n^d$.
The first step is:

\begin{prop}\label{prop_pi_gen}
  Let $n\in \N^*$ and $d\in \N$. The $A(k)/A^{\pgq d}(k)$-module $M(n)/M^{\pgq d}(n)$
  is generated by the $f_n^k$ with $nk<d$.
\end{prop}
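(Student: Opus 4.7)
The plan is to argue by strong induction on $d \in \N$, using Proposition~\ref{prop_ker_phi} (the kernel-exact sequence for the shifting operator $\Phi_n^+$) as the main engine and Proposition~\ref{prop_phi_pi} (the identity $\Phi_n^+(f_n^{k+1}) = f_n^k$) as the device for lifting image-level decompositions back to $M(n)$. The case $d = 0$ is vacuous, so the real work is in the inductive step, which I organize as a comparison between $M(n)/M^{\pgq d-n}(n)$ and $M(n)/M^{\pgq d}(n)$.

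For the initial range $0 < d \leq n$, I would first specialize Proposition~\ref{prop_ker_phi} to $d'=0$: the exact sequence reads
\[ 0 \To A(k)/A^{\pgq n}(k) \To M(n)/M^{\pgq n}(n) \xrightarrow{\Phi_n^+} M(n)/M^{\pgq 0}(n) = 0, \]
so $M(n)/M^{\pgq n}(n) \cong A(k)/A^{\pgq n}(k)$ is generated by the constant $f_n^0 = 1$. Projecting further, $M(n)/M^{\pgq d}(n)$ is a quotient of this and hence also generated by $f_n^0$, matching the statement since $k = 0$ is the unique index with $nk < d$ when $d \leq n$.

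For the inductive step at $d > n$, I would take $\alpha \in M(n)$ and apply the induction hypothesis at $d - n$ to $\Phi_n^+(\alpha) \in M(n)$, obtaining $b_k \in A(k)$ such that $\Phi_n^+(\alpha) \equiv \sum_{nk < d-n} b_k f_n^k \pmod{M^{\pgq d-n}(n)}$. Setting $\gamma = \sum_{nk < d-n} b_k f_n^{k+1}$, Proposition~\ref{prop_phi_pi} gives $\Phi_n^+(\gamma) = \sum b_k f_n^k$, so $\Phi_n^+(\alpha - \gamma) \in M^{\pgq d-n}(n)$. Proposition~\ref{prop_ker_phi} (applied at $d - n$) then furnishes a constant $c \in A(k)$ with $\alpha \equiv c \cdot f_n^0 + \gamma \pmod{M^{\pgq d}(n)}$; since the shifted indices $k + 1$ satisfy $n(k+1) < d$, this is a decomposition of the required shape.

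The main obstacle is just bookkeeping: one must verify that the shift $k \mapsto k + 1$ transports the constraint $nk < d - n$ to $n(k+1) < d$, which it does, and that the constant produced by Proposition~\ref{prop_ker_phi} integrates into the decomposition as the $f_n^0$ coefficient, which also causes no trouble. The real substance has already been packaged into Propositions~\ref{prop_phi_pi} and~\ref{prop_ker_phi}, so once the degenerate base case is unpacked the induction propagates without further difficulty.
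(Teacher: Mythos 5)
Your proof is correct and follows essentially the same strategy as the paper: induction on $d$, using $\Phi_n^+$ together with Proposition~\ref{prop_ker_phi} to reduce modulo the filtration and Proposition~\ref{prop_phi_pi} to lift the decomposition of $\Phi_n^+(\alpha)$ back to one of $\alpha$. The only cosmetic difference is that you spell out the range $0 < d \leq n$ as a separate base case, whereas the paper absorbs it into the inductive step by tacitly using that $M^{\pgq d-n}(n) = M(n)$ when $d - n \leq 0$ (the filtration is positive), so that the induction hypothesis there is vacuously an empty sum.
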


\begin{proof}
  We use induction on $d$. For $d=0$, this is trivial
  since $M^{\pgq 0}(n)=M(n)$. Suppose the property holds up to
  $d-1$, and let $\alpha\in M(n)$; we set $\bar{\alpha}\in M(n)/M^{\pgq d}(n)$
  its residue class. By induction, $\Phi(\bar{\alpha}) = \sum a_k f_n^k$
  with $nk<d-n$, so if we set $\beta = \alpha -\sum a_k f_n^{k+1}$
  we get $\Phi(\bar{\beta})=0$. From there, $\beta$ is congruent modulo $M^{\pgq d}(n)$
  to a constant invariant $a_{-1}$, hence $\bar{\alpha} = \sum a_{k-1} f_n^k$
  with $nk<d$.
\end{proof}

The problem is that to express an invariant in terms of the $f_n^d$,
it is in general necessary to use an infinite
combination, as the following example illustrates.

\begin{ex}\label{ex_disc}
  Consider the case $A=W$. Let $\alpha(q) = \fdiag{\disc(q)}$;
  it is a Witt invariant of $I$. Then $\alpha^+ = -\alpha$; indeed:
  \[ \fdiag{\disc(q + \pfis{a})} = \fdiag{\disc(q)a} = \fdiag{\disc(q)} -\pfis{a}\fdiag{\disc(q)}. \]
  Thus $\alpha$ cannot be written as a finite combination of
  the $f_1^d$ (since the length of such a combination strictly decreases
  when applying $\Phi^+$). On the other hand, we may write it (at
  least formally for now) as
  \[ \alpha = \sum_{d\in \N}(-1)^d f_1^d. \] 
\end{ex}

But such an infinite combination may not always be well-defined:
since the $f_n^d$ take values in $A^{\pgq m}$ for increasing
values of $m$, any $\sum_{d\in \N}a_d f_n^d$ is well-defined
as an invariant with values in the completion of $A$ with respect
to its filtration, but usually not in $A$ itself, as the next example shows.

\begin{ex}\label{ex_moins_pfis}
  If $k$ is formally real, then $\sum_d f_1^d$
  sends $-\pfis{-1}$ to $\sum_{d\in \N} (-1)^d \{-1\}^d$,
  which is not in $A(k)$ (but is in its completion).
\end{ex}

It readily appears that the trouble is the bad behaviour of
the $f_n^d$ with respect to the opposites of Pfister forms.
To get a satisfying description of $M(n)$, we will introduce a new ``basis'',
with better balance between sums and differences of Pfister forms,
such that any infinite combination does take values in $A$.

\begin{defi}\label{def_g}
  Let $n\in \N^*$. For any $d\in \N$, we define $g_n^d\in M^{\pgq nd}(n)$ by:
  \begin{itemize}
    \bitem $g_n^0 = 1$ ;
    \bitem if $d\in \N^*$ is odd, $(g_n^d)^-=g_n^{d-1}$
    and $g_n^d(0)=0$ ;
    \bitem if $d\in \N^*$ is even, $(g_n^d)^+=g_n^{d-1}$
    and $g_n^d(0)=0$.
  \end{itemize}
  If $A=W$ (resp. $A=H$), we sometimes write $\gamma_n^d$ (resp. $v_{nd}^{(n)}$)
  for $g_n^d$.
\end{defi}

Corollary \ref{cor_phi_exact} ensures that these are well-defined.
This definition, which balances $\Phi^+$ and $\Phi^-$, gives
a reasonable behaviour under both operators:

\begin{prop}\label{prop_g_pm}
  Let $n\in \N^*$ and $d\in \N$. Then:
  \[ (g_n^{d+2})^{+-} = (g_n^{d+2})^{-+} = g_n^d ; \]
  \[ (g_n^{d+1})^+ = \left\{ \begin{array}{lc} g_n^d & \text{if $d$ odd} \\
             g_n^d + \{-1\}^n g_n^{d-1} & \text{if $d$ even ;} \end{array} \right. \]
  \[ (g_n^{d+1})^- = \left\{ \begin{array}{lc} g_n^d & \text{if $d$ even} \\
             g_n^d - \{-1\}^n g_n^{d-1} & \text{if $d$ odd.} \end{array} \right. \]
\end{prop}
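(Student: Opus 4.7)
The plan is a direct case analysis on the parity of $d$, exploiting the fact that each $g_n^{d+1}$ has one of $\Phi^+$ or $\Phi^-$ computed by Definition \ref{def_g}, and using Proposition \ref{prop_phi_pm} to convert between the two operators in the remaining cases. The key identity to keep in mind throughout is $\alpha^+ - \alpha^- = \{-1\}^n \alpha^{+-} = \{-1\}^n \alpha^{-+}$, which in particular forces $\alpha^{+-}=\alpha^{-+}$.

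I would first establish formula (1). If $d$ is even, then $d+2$ is even and the definition gives $(g_n^{d+2})^+=g_n^{d+1}$; applying $\Phi^-$ and using that $d+1$ is odd gives $(g_n^{d+2})^{+-}=(g_n^{d+1})^-=g_n^d$. If $d$ is odd, a symmetric argument via $(g_n^{d+2})^- = g_n^{d+1}$ yields $(g_n^{d+2})^{-+} = g_n^d$. In each case the remaining equality $(g_n^{d+2})^{+-}=(g_n^{d+2})^{-+}$ comes for free from Proposition \ref{prop_phi_pm}.

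For formulas (2) and (3), in each case one of the two identities is literally the defining property of $g_n^{d+1}$: if $d$ is odd (so $d+1$ is even) then $(g_n^{d+1})^+=g_n^d$, and if $d$ is even (so $d+1$ is odd) then $(g_n^{d+1})^-=g_n^d$. To handle the other direction I would invoke Proposition \ref{prop_phi_pm} applied to $\alpha=g_n^{d+1}$, which gives
\[ (g_n^{d+1})^+ - (g_n^{d+1})^- = \{-1\}^n (g_n^{d+1})^{+-}. \]
Now formula (1), applied with $d$ replaced by $d-1$, identifies $(g_n^{d+1})^{+-}=g_n^{d-1}$. Substituting this and the known value of $(g_n^{d+1})^+$ or $(g_n^{d+1})^-$ yields the expected expressions in the remaining (even/odd) cases.

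The only subtlety is the boundary case $d=0$ in formula (2), where the claimed expression formally contains $g_n^{-1}$; I would handle it separately by observing that the constant invariant $g_n^0=1$ satisfies $1^+=0$, so that Proposition \ref{prop_phi_pm} applied to $g_n^1$ directly gives $(g_n^1)^+ = (g_n^1)^- = 1 = g_n^0$, matching the formula under the convention $g_n^{-1}=0$. There is no real obstacle in this proof; the main point is simply to keep the parity bookkeeping straight and to recognize that formula (1) is the input that unlocks both (2) and (3).
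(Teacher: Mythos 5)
Your proof is correct and follows essentially the same route as the paper: use the defining property of $g_n^{d+1}$ to get one of the two formulas (2)/(3) in each parity case, use it together with formula (1) to compute $(g_n^{d+1})^{+-}$, and then apply $\alpha^+-\alpha^-=\{-1\}^n\alpha^{+-}$ from Proposition \ref{prop_phi_pm} to obtain the remaining formula. You are a bit more careful than the paper's own (rather terse) proof: you explicitly track the parity in each case correctly, and you address the boundary case $d=0$ of formula (2), where the displayed expression involves $g_n^{-1}$ and requires the convention $g_n^{-1}=0$ (which indeed follows since $(g_n^1)^{+-}=(g_n^0)^+=0$, the constant $1$ being killed by $\Phi^+$).
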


\begin{proof}
  If $d$ is even, then $(g_n^{d+2})^-=g_n^{d+1}$ and $(g_n^{d+1})^+=g_n^d$,
  and if $d$ is odd, $(g_n^{d+2})^+=g_n^{d+1}$ and $(g_n^{d+1})^-=g_n^d$.
  In any case the first formula is satisfied.

  For the remaining two, we use $(g_n^{d+1})^+ - (g_n^{d+1})^- = \{-1\}^n g_n^{d-1}$
  coming from proposition \ref{prop_phi_pm}. We may conclude, arguing
  according to the parity of $d$.
\end{proof}

We may write the precise relation between $f_n^d$ and $g_n^d$:

\begin{prop}\label{prop_f_g}
  Let $n\in \N^*$. For any $d\in \N^*$:
  \begin{align*}
    g_n^d &= \sum_{k=\lfloor \frac{d}{2}\rfloor +1}^d
        \binom{\lfloor \frac{d-1}{2} \rfloor}{k-\lfloor \frac{d}{2}\rfloor -1} \{-1\}^{n(d-k)} f_n^k \\
    f_n^d &= \sum_{k=1}^d (-1)^{d-k}\binom{d-\lfloor \frac{k+1}{2}\rfloor -1}{\lfloor \frac{k}{2}\rfloor -1}
        \{-1\}^{n(d-k)} g_n^k.
  \end{align*}
  In particular, $(f_n^i)_{i\ppq d}$ and $(g_n^i)_{i\ppq d}$ generate the
  same submodule of $M(n)$.
\end{prop}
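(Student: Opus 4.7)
Set $\omega=\{-1\}^n$ for brevity. The plan is to prove the first formula by induction on $d\geq 1$, then derive the second by inverting an upper-triangular change of basis.

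Let $\tilde{g}_n^d$ denote the right-hand side of the first formula. Since every $f_n^k$ with $k\geq 1$ vanishes at $0$ and the sum in $\tilde{g}_n^d$ runs only over $k\geq\lfloor d/2\rfloor+1\geq 1$, we immediately have $\tilde{g}_n^d(0)=0$. By Proposition \ref{prop_ker_phi}, it therefore suffices to check that $\Phi^+(\tilde{g}_n^d)=(g_n^d)^+$, since any normalized invariant is determined by its image under $\Phi^+$. Using $\Phi^+(f_n^k)=f_n^{k-1}$ from Proposition \ref{prop_phi_pi}, applying $\Phi^+$ to $\tilde{g}_n^d$ simply shifts each index $k$ down by one. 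Separating by parity: if $d$ is even then $(g_n^d)^+=g_n^{d-1}$, and the shifted sum matches $g_n^{d-1}$ directly after re-indexing, using the inductive formula for $g_n^{d-1}$. If $d$ is odd then $(g_n^d)^+=g_n^{d-1}+\omega g_n^{d-2}$ by Proposition \ref{prop_g_pm}, and expanding both sides via the inductive formulas for $g_n^{d-1}$ and $g_n^{d-2}$, the equality reduces to Pascal's rule $\binom{m-1}{j-1}+\binom{m-1}{j}=\binom{m}{j}$ applied term by term. The base cases $d=1$ and $d=2$ are immediate: the formula gives $g_n^1=f_n^1$ and $g_n^2=f_n^2$, which match the definition.

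Once the first formula is in hand, observe that the coefficient of $f_n^d$ in $g_n^d$ is $\binom{\lfloor(d-1)/2\rfloor}{\lfloor(d-1)/2\rfloor}=1$, while all other $f_n^k$ appearing have $k<d$. Thus the transition matrix from $(f_n^1,\ldots,f_n^D)$ to $(g_n^1,\ldots,g_n^D)$ is upper-triangular with $1$'s on the diagonal, hence invertible over $A(k)$; this already yields the final assertion that the two families generate the same $A(k)$-submodule. The second formula is then obtained either by explicit inversion of this triangular system (a recursive substitution of the first formula into itself) or, more cleanly, by a direct induction mirroring the first: the right-hand side of the second formula is normalized, and applying $\Phi^+$ term by term (using Proposition \ref{prop_g_pm} to compute each $(g_n^k)^+$ according to the parity of $k$) should produce $f_n^{d-1}$, which reduces yet again to Pascal's rule after careful re-indexing.

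The hard part is purely combinatorial bookkeeping: splitting every formula by the parity of the relevant index, tracking the two distinct shift rules of Proposition \ref{prop_g_pm}, and applying Pascal's rule with the correct index ranges (in particular handling the boundary terms $j=0$ and $j=m$ where one of the two binomial coefficients vanishes). No new conceptual input beyond Propositions \ref{prop_ker_phi}, \ref{prop_phi_pi}, and \ref{prop_g_pm} is needed.
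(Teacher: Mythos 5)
Your proof is correct and takes essentially the same approach as the paper: prove the first identity by induction on $d$, using normalization plus Proposition \ref{prop_ker_phi} to reduce to matching $\Phi^+$ of both sides, splitting by parity of $d$, invoking Proposition \ref{prop_g_pm} for the two shift rules, and closing with Pascal's rule; then either invert the triangular system or run a parallel induction for the second identity, and conclude the generation statement from unipotence of the transition matrix. The only very small gap is that "$g_n^1=f_n^1$ and $g_n^2=f_n^2$ match the definition" still requires the one-line check that $(f_n^1)^-=1$ (e.g.\ via equation (\ref{eq_fn_moins_phi}) or Proposition \ref{prop_fn_phi_moins}), but this is trivial and does not affect the argument.
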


\begin{proof}
  Denote $\alpha_d$ the invariant defined by the right-hand side of
  the formula for $g_n^d$. If $d=2m$, the formula becomes
  \[ \alpha_d = \sum_{k=m+1}^{2m} \binom{m-1}{k-m-1} \{-1\}^{n(2m-k)} f_n^k\]
  which gives
  \[ \alpha_d^+ = \sum_{k=m+1}^{2m} \binom{m-1}{k-m-1} \{-1\}^{n(2m-k)} f_n^{k-1},\]
  and if $d=2m+1$ then we get
  \[ \alpha_d = \sum_{k=m+1}^{2m+1} \binom{m}{k-m-1} \{-1\}^{n(2m+1-k)} f_n^k\]
  hence
  \[ \alpha_d^+ = \sum_{k=m+1}^{2m+1} \binom{m}{k-m-1} \{-1\}^{2m+1-k} f_n^{k-1}.\]
  We thus have to check that in both cases we find the correct induction
  formula for $\alpha_{d+1}^+$ (coming from proposition \ref{prop_g_pm}).
  If $d=2m+1$, we have to show $\alpha_{2m+2}^+=\alpha_{2m+1}$, which is
  immediate given the above formulas. If $d=2m$, we have to show
  $\alpha_{2m+1}^+ = \alpha_{2m} + \{-1\}^n\alpha_{2m-1}$, so we need
  to compare
  \[ \sum_{k=m}^{2m}\binom{m}{k-m}\{-1\}^{n(2m-k)} f_n^k \]
  and
  \[ \sum_{k=m+1}^{2m}\binom{m-1}{k-m-1}\{-1\}^{n(2m-k)}f_n^k
         +  \sum_{k=m}^{2m-1}\binom{m-1}{k-m}\{-1\}^{n(2m-k)}f_n^k \]
  which are easily seen as being equal using Pascal's triangle.

  The formula for $f_n^d$ can be obtained either by inverting the one
  for $g_n^d$, or in a similar fashion. Let $\beta_d$ be the invariant
  defined by the right-hand side. Then we show that $\beta_d^+ = \beta_{d-1}$,
  separating the sums according to the parity of $k$:
  \begin{align*}
    \beta_d^+ =& (-1)^d\sum_m \binom{d-m-1}{m-1}\{-1\}^{n(d-2m)}(g_n^{2m})^+ \\
              &+ (-1)^{d+1}\sum_m \binom{d-m-2}{m-1}\{-1\}^{n(d-2m-1)}(g_n^{2m+1})^+ \\
              =& (-1)^d\sum_m \binom{d-m-1}{m-1}\{-1\}^{n(d-2m)}g_n^{2m-1} \\
              &+ (-1)^{d+1}\sum_m \binom{d-m-2}{m-1}\{-1\}^{n(d-2m-1)}(g_n^{2m}+\{-1\}^ng_n^{2m-1}) \\
              =& (-1)^{d+1}\sum_m \binom{d-m-2}{m-1}\{-1\}^{n(d-2m-1)}g_n^{2m} \\
              &+ (-1)^d\sum_m \left( \binom{d-m-1}{m-1} - \binom{d-m-2}{m-1}\right) \{-1\}^{n(d-2m)}g_n^{2m-1} \\
              =& (-1)^{d-1}\sum_m \binom{d-1-m-1}{m-1}\{-1\}^{n(d-1-2m)}g_n^{2m} \\
              &+ (-1)^{d-1+1}\sum_m \binom{d-m-1}{m-2} \{-1\}^{n(d-2m)}g_n^{2m-1}
  \end{align*}
  which does give $\alpha_{d-1}$.

  The last statement comes from the fact that the transition matrix from
  $(f_n^d)_d$ to $(g_n^d)_d$ is triangular unipotent.
\end{proof}

The important consequence of the balance of $g_n^d$ is given by:

\begin{prop}\label{prop_g_borne}
  Let $n\in \N^*$, and let $q\in I^n(K)$, such that
  $q=\sum_{i=1}^s \phi_i - \sum_{i=1}^t\psi_i$, where $\phi_i,\psi_i\in \Pf_n(K)$.
  Then for any $d>2\max(s,t)$, $g_n^d(q)=0$.
\end{prop}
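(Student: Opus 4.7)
The plan is to proceed by induction on $s+t$. The base case $s=t=0$ gives $q=0$, and $g_n^d(0)=0$ for $d\pgq 1$ by definition of $g_n^d$, which covers all $d>2\max(0,0)=0$.

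For the inductive step, the key tool is the pair of identities
\[ g_n^d(q\pm\phi) = g_n^d(q) \pm f_n(\phi)\cdot(g_n^d)^\pm(q) \]
combined with Proposition \ref{prop_g_pm}, which writes $(g_n^d)^+$ and $(g_n^d)^-$ as $A(k)$-linear combinations of $g_n^{d-1}$ and $g_n^{d-2}$. Peeling off a single Pfister summand from $q$ therefore expresses $g_n^d(q)$ as an $A(K)$-linear combination of values $g_n^k(q')$ with $k\in\{d-2,d-1,d\}$, evaluated on a form $q'$ having strictly fewer Pfister summands.

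If $t=0$, write $q=q'+\phi_s$ with $q'$ of type $(s-1,0)$; since $d>2s$, each relevant $k\pgq d-2>2(s-1)=2\max(s-1,0)$, and the inductive hypothesis kills every term. The case $s=0$ is symmetric via negative peeling. The delicate case is $s,t\pgq 1$, and particularly $s=t$: removing one Pfister form from a single side fails to decrease $\max(s,t)$, and the induction does not close. The remedy is to peel off one term from each side simultaneously. Setting $q=q'''+\phi_s-\psi_t$ with $q'''$ of type $(s-1,t-1)$ and expanding with both $\Phi^+$ and $\Phi^-$ gives
\[ g_n^d(q) = g_n^d(q''') + f_n(\phi_s)(g_n^d)^+(q''') - f_n(\psi_t)(g_n^d)^-(q''') - f_n(\phi_s)f_n(\psi_t)(g_n^d)^{+-}(q'''). \]
The identity $(g_n^d)^{+-}=g_n^{d-2}$ from Proposition \ref{prop_g_pm}, together with the two-term expansions of $(g_n^d)^\pm$, reduces every summand to a scalar multiple of $g_n^k(q''')$ with $k\in\{d-2,d-1,d\}$. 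Since $d>2\max(s,t)$ and $s,t\pgq 1$, we have $k\pgq d-2>2(\max(s,t)-1)=2\max(s-1,t-1)$, so the inductive hypothesis concludes.

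The main obstacle is precisely this symmetric case: a one-sided peeling cannot drop $\max(s,t)$, so the symmetric design of $g_n^d$ under $\Phi^+$ and $\Phi^-$ must be exploited via the commutation relation $(g_n^d)^{+-}=(g_n^d)^{-+}=g_n^{d-2}$, which drops the index by two in a single step, matching the simultaneous decrease of both $s$ and $t$ by one.
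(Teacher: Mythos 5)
Your proof is correct and uses essentially the same approach as the paper: both rely on peeling off one Pfister form from each side simultaneously and exploiting $(g_n^d)^{+-}=(g_n^d)^{-+}=g_n^{d-2}$, together with the two-term expansions of $(g_n^d)^{\pm}$, to drop the index by at most two while dropping both $s$ and $t$ by one. The paper streamlines the case analysis by first adding hyperbolic forms to normalize to $s=t$ and then inducting on $s$, avoiding your separate $t=0$ and $s=0$ cases, but this is a cosmetic difference.
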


\begin{proof}
  We may add hyperbolic forms in either sum so that $s=t$. Then we
  prove the statement by induction on $s$ : if $s=0$ then $q=0$,
  so for $d>0$ we have indeed $g_n^d(q)=0$ by construction.

  If the result holds up to $s-1$ for some $s\in \N^*$, then write
  $q' = q - \phi_s$ and $q'' = q' + \psi_s$. We get
  \begin{align*}
    g_n^d(q) =& g_n^d(q') + f_n(\phi_s)(g_n^d)^+(q') \\
             =& g_n^d(q'') -  f_n(\psi_s)(g_n^d)^-(q'') + f_n(\phi_s)(g_n^d)^+(q'') \\
             &- f_n(\phi_s)f_n(\psi_s)(g_n^d)^{+-}(q'')
  \end{align*}
  Now according to proposition \ref{prop_g_pm}, $(g_n^d)^-$, $(g_n^d)^+$
  and $(g_n^d)^{+-}$ may all be be expressed as combinations of some
  $g_n^k$ with $k\pgq d-2$, so we may apply the induction hypothesis
  with $q''$.
\end{proof}

\begin{coro}\label{cor_g_fixed_dim}
  If $q\in I(K)$ is the Witt class of an $r$-dimensional form,
  then $g_1^d(q)=0$ for any $d>r$.
\end{coro}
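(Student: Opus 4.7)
The plan is to proceed by induction on $r$. Since $q\in I(K)$, the integer $r$ must be even. The base case $r=0$ forces $q=0$, so $g_1^d(0)=0$ for any $d\pgq 1$ directly from the defining conditions of $g_1^d$.

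For the inductive step, I decompose $q = q' + \fdiag{a,b}$ where $q'$ is the Witt class of an $(r-2)$-dimensional form (hence lies in $I(K)$) and $\fdiag{a,b}$ is a 2-dimensional form. The crucial identity is $\fdiag{a,b} = \pfis{-1} - \pfis{a} - \pfis{b}$ in $W(K)$, obtained from $\fdiag{c}=\fdiag{1}-\pfis{c}$ and $2\fdiag{1}=\pfis{-1}$. Applying the defining formula of the shift operators $\Phi_1^{\pm}$ three times, I express $g_1^d(q)$ as a linear combination (with explicit coefficients in $A(K)$) of values $(g_1^d)^\sigma(q')$, for $\sigma$ ranging over words in $\{+,-\}$ of length at most three.

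Then I substitute the explicit formulas of Proposition \ref{prop_g_pm}, which express each $(g_1^d)^\sigma$ as a combination of $g_1^k$ for $k\in\{d-1,d-2,d-3,d-4\}$. A direct (if somewhat tedious) calculation, separating the cases $d$ odd and $d$ even, shows that the coefficients of $g_1^{d-3}(q')$ and $g_1^{d-4}(q')$ cancel, yielding an identity
\[ g_1^d(q) = g_1^d(q') + c_1\,g_1^{d-1}(q') + c_2\,g_1^{d-2}(q') \]
with some explicit $c_1,c_2\in A(K)$ depending on $\{a\},\{b\},\{-1\}$. When $d>r$, each of $d,d-1,d-2$ strictly exceeds $r-2$, so the induction hypothesis applied to the $(r-2)$-dimensional form $q'$ makes all three terms vanish, giving $g_1^d(q)=0$.

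The main obstacle is justifying the cancellation of the $g_1^{d-3}(q')$ and $g_1^{d-4}(q')$ terms. A direct application of Proposition \ref{prop_g_borne} to the decomposition $q=(r/2)\pfis{-1}-\sum_{i=1}^r \pfis{a_i}$ yields only $g_1^d(q)=0$ for $d>2r$, which is twice too weak. The sharpening to $d>r$ depends crucially on the ``$+/{-}$-balance'' inherent to the definition of $g_n^d$, embodied in the identity $(g_n^{d+2})^{+-}=g_n^d$ of Proposition \ref{prop_g_pm}: this forces the formal worst-case index drop of six (from three applications of shift operators) to collapse to an effective drop of two.
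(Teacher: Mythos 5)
Your proof is correct: the inductive step goes through, and the cancellation you advertise does hold (for instance, the coefficient of $g_1^{d-3}(q')$ is $-\{-1\}\{a\}\{b\} + \{-1\}\{a\}\{b\} = 0$, and similarly for $g_1^{d-4}(q')$ when $d$ is even). However, you take a considerably longer route than the paper, and you actually identified the shortcut before walking past it. The paper's proof is a one-liner: it uses the identity $\fdiag{a,b} = \pfis{-a} - \pfis{b}$ in $W(K)$ rather than your $\fdiag{a,b} = \pfis{-1}-\pfis{a}-\pfis{b}$, so that $q = \fdiag{a_1,b_1,\dots,a_m,b_m}$ with $m = r/2$ decomposes as the \emph{balanced} sum $\sum_{i=1}^m (\pfis{-a_i} - \pfis{b_i})$ with $s = t = m$. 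Proposition \ref{prop_g_borne} then immediately yields $g_1^d(q) = 0$ for $d > 2m = r$, with no induction and no shift-operator bookkeeping. Your correct diagnosis that the naive decomposition $q = (r/2)\pfis{-1} - \sum_i \pfis{a_i}$ only gives $d > 2r$ points to the right issue (imbalance between $s$ and $t$), but the remedy is to rebalance the decomposition rather than to retreat into an inductive cancellation argument. The ``$+/{-}$-balance'' you highlight as the mechanism behind Proposition \ref{prop_g_pm} is indeed what makes Proposition \ref{prop_g_borne} tick; by re-deriving part of that mechanism by hand you essentially re-prove a special case of the latter, whereas the paper simply invokes it with a better input.
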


\begin{proof}
  Writing $r=2m$, if $q=\fdiag{a_1,b_1,\dots,a_m,b_m}$, then
  $q=\sum_{i=1}^m \pfis{-a_i}-\pfis{b_i}$, which allows to conclude
  using the previous proposition.
\end{proof}

Me may put it all together to prove the central theorem :

\begin{thm}\label{thm_g}
  Let $n\in \N^*$, and let $N(n)=A(k)^{\N}$, which is a filtered
  $A(k)$-module for the filtration
  $N^{\pgq m}(n) = \ens{(a_d)_{d\in \N}}{a_d\in A^{\pgq m-nd}}$.
  
  The following applications are mutually inverse isomorphisms
  of filtered $A(k)$-modules :
  \[ \isomdef{F}{N(n)}{M(n)}{(a_d)_{d\in \N}}{\sum_{d\in \N} a_d g_n^d,} \]
  \[ \isomdef{G}{M(n)}{N(n)}{\alpha}{(\alpha^{[d]}(0))_{d\in \N}.} \]
  where $\alpha^{[d]} = \alpha^{m+,m-}$ if $d=2m$, and
  $\alpha^{[d]} = \alpha^{(m+1)+,m-}$ if $d=2m+1$.
\end{thm}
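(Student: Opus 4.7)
\emph{Plan.} The strategy is to verify that $F$ and $G$ are well-defined filtered $A(k)$-module morphisms, and then that they are mutually inverse. The first part is essentially formal; the core of the proof is a single identity for the $g_n^d$ under the operator $\alpha \mapsto \alpha^{[k]}(0)$.

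\emph{Well-definition and filtered properties.} For $F$, any $q \in I^n(K)$ can be written as a difference of two sums of $n$-fold Pfister forms, so by Proposition \ref{prop_g_borne} the sum $\sum_d a_d g_n^d(q)$ is pointwise finite, and $F((a_d))$ is a well-defined invariant. If $(a_d) \in N^{\pgq m}(n)$ then $a_d \in A^{\pgq m - nd}(k)$ and $g_n^d \in M^{\pgq nd}(n)$, so $a_d g_n^d \in M^{\pgq m}(n)$ and hence $F((a_d)) \in M^{\pgq m}(n)$. For $G$, each application of $\Phi^+$ or $\Phi^-$ shifts the filtration degree by $-n$, so $\alpha^{[d]} \in M^{\pgq m - nd}(n)$ and $\alpha^{[d]}(0) \in A^{\pgq m - nd}(k)$ whenever $\alpha \in M^{\pgq m}(n)$.

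\emph{Key identity.} The heart of the proof is
\[ (g_n^d)^{[k]}(0) = \delta_{d,k}. \]
A straightforward induction using Proposition \ref{prop_g_pm}, together with the fact that $\Phi^+$ and $\Phi^-$ annihilate constant invariants, gives
\[ (g_n^d)^{m+, m-} = \begin{cases} g_n^{d-2m} & \text{if } d \pgq 2m, \\ 0 & \text{if } d < 2m, \end{cases} \]
with the convention $g_n^{-1} = 0$. Evaluating at $0$ and using $g_n^0 = 1$ together with $g_n^j(0) = 0$ for $j \pgq 1$ proves the identity when $k = 2m$. For $k = 2m + 1$, one further applies $\Phi^+$ and observes via Proposition \ref{prop_g_pm} that $(g_n^j)^+(0) = 0$ for all $j \neq 1$, while $(g_n^1)^+(0) = g_n^0(0) = 1$, giving the required Kronecker delta.

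\emph{Inversion and main obstacle.} For $G \circ F = \Id$, the pointwise finiteness of the sum allows $\Phi^+$ and $\Phi^-$ to commute with it (unfolding $\alpha(q + \phi) = \alpha(q) + f_n(\phi) \alpha^+(q)$ term by term), so $F((a_d))^{[k]}(0) = \sum_d a_d (g_n^d)^{[k]}(0) = a_k$ by the key identity. For $F \circ G = \Id$, set $\beta = \alpha - F(G(\alpha))$; by the previous step $G(\beta) = 0$. Using Proposition \ref{prop_pi_gen} combined with Proposition \ref{prop_f_g}, one writes $\bar\beta = \sum_{nk < D} \bar{c}_k \bar{g}_n^k$ in $M(n)/M^{\pgq D}(n)$; the key identity, passed to the quotient, shows $\bar{c}_k \equiv \beta^{[k]}(0) = 0 \pmod{A^{\pgq D - nk}(k)}$, whence $\beta \in M^{\pgq D}(n)$ for every $D$. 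Since the filtration on $A(K)$—and hence on $M(n)$—is separated, $\beta = 0$. The main technical obstacle is the key identity itself, whose verification requires parity bookkeeping and careful treatment of the boundary cases $d = 0, 1$ in Proposition \ref{prop_g_pm}; once it is in hand, the inversion argument reduces to Proposition \ref{prop_pi_gen} together with separation of the filtration.
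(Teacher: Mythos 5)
Your proof is correct and follows essentially the same route as the paper: finiteness from Proposition \ref{prop_g_borne}, the computation of $(g_n^d)^{[k]}(0)$ from Proposition \ref{prop_g_pm} to establish $G\circ F=\Id$, and injectivity of $G$ via Proposition \ref{prop_pi_gen}, Proposition \ref{prop_f_g}, and separatedness of the filtration. The only difference is presentational: you isolate the Kronecker-delta identity $(g_n^d)^{[k]}(0)=\delta_{d,k}$ as an explicit lemma, which the paper uses implicitly, and you phrase injectivity of $G$ as $F\circ G=\Id$ via $\beta=\alpha-F(G(\alpha))$, which is equivalent once $G\circ F=\Id$ is known.
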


\begin{proof}
  First, the application $F$ is well-defined, since according to
  proposition \ref{prop_g_borne}, for any fixed $q\in I^n(K)$
  we have $g_n^d(q)=0$ for large enough $d$. Then $F$ and $G$
  are clearly module morphisms, and the fact that
  they respect the filtrations is just a reformulation of the
  fact that $g_n^d$ takes values in $A^{\pgq nd}$, and that $\Phi_n^\eps$
  has degree $-n$. Let $\alpha = \sum_d a_d g_n^d$. 
  Using proposition \ref{prop_ker_phi}, we see that for any
  $r,s\in \N$ we can compute $\alpha^{r+,s-}(0)$ ignoring
  the terms for $d$ large enough. Thus it is easy to see from
  proposition \ref{prop_g_pm} that $a_{2m}=\alpha^{m+,m-}(0)$
  and $a_{2m+1}=\alpha^{(m+1)+,m-}(0)$, which shows that $G\circ F = \Id$.

  We now prove that $G$ is injective, which finishes the proof
  of the theorem. Let $\alpha\in \Ker(G)$, and let $d\in \N$.
  According to proposition \ref{prop_pi_gen}, and using the last
  statement of proposition \ref{prop_f_g}, we see that $\alpha$
  is congruent to some combination $\sum_{nk< d}a_k g_n^k$ modulo
  $M^{\pgq d}(n)$. Now the exact sequence in proposition \ref{prop_ker_phi}
  allows to see that $a_k \equiv \alpha^{[k]}(0)$ modulo $A^{\pgq d-nk}(k)$,
  so, since $\alpha^{[k]}(0)=0$, $a_k\in A^{\pgq d-nk}(k)$. This in turn implies that
  $\sum_{nk\ppq d}a_k g_n^k\in M^{\pgq d}(n)$, and thus $\alpha\in M^{\pgq d}(n)$.
  Since this is true for any $d\in \N$, we may conclude that $\alpha = 0$.  
\end{proof}

\begin{coro}
  Let $n\in \N^*$ and let $\eps=\pm 1$. There is an exact sequence
  of filtered $A(k)$-modules
  \[ 0 \to A(k) \To M(n) \xrightarrow{\Phi_n^\eps} M(n)[-n] \To 0. \]
\end{coro}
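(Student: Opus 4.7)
The plan is as follows. Exactness in the middle of the sequence is essentially free: proposition \ref{prop_ker_phi} already identifies the kernel of $\Phi_n^\eps$ with the submodule of constant invariants, which is precisely the image of the canonical inclusion $A(k) \hookrightarrow M(n)$. All the real work therefore goes into proving surjectivity of $\Phi_n^\eps$, which upgrades the analogous statement of corollary \ref{cor_phi_exact} from the submodule $M'(n)$ (finite sums of $g_n^d$'s) to the full module $M(n)$ (arbitrary infinite sums).

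The key idea I will use is theorem \ref{thm_g}: it parametrizes $M(n)$ by sequences in $A(k)^{\N}$ via $(a_d) \mapsto \sum_d a_d g_n^d$, so writing $\beta \in M(n)[-n]$ as $\sum_d b_d g_n^d$ and seeking $\alpha = \sum_d a_d g_n^d$ with $\Phi_n^\eps(\alpha) = \beta$ reduces the problem to a linear system in the coordinates $a_d$. Using the explicit formulas of proposition \ref{prop_g_pm} for the action of $\Phi^\pm$ on each $g_n^d$, one checks that for $\eps = +1$ the system is two-diagonal, namely $b_{2m} = a_{2m+1}$ and $b_{2m-1} = a_{2m} + \{-1\}^n a_{2m+1}$ for $m \pgq 1$. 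This is trivially solvable, with $a_0 \in A(k)$ remaining free, exactly as the kernel description predicts. For $\eps = -1$ the situation is symmetric, giving $a_{2m} = b_{2m-1}$ and $a_{2m+1} = b_{2m} + \{-1\}^n b_{2m+1}$. In either case, theorem \ref{thm_g} guarantees that the resulting sequence $(a_d)$ assembles into a bona fide $\alpha \in M(n)$, which by construction satisfies $\Phi_n^\eps(\alpha) = \beta$.

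The main obstacle is really a bookkeeping one: I need to apply the formulas of proposition \ref{prop_g_pm} consistently at the bottom of the recurrence, where the formal term $g_n^{d-1}$ becomes $g_n^{-1}$. Adopting the convention $g_n^{-1} = 0$, which is compatible with the direct computation of $(g_n^1)^+$ via proposition \ref{prop_phi_pm} and the defining identity $(g_n^1)^- = g_n^0$, handles this cleanly. Once this is settled, filtration compatibility of the whole sequence is automatic: the constant-invariant inclusion $A(k) \to M(n)$ is visibly filtered, and $\Phi_n^\eps$ was shown to have degree $-n$ when the propdef was established, which is exactly what is required for a morphism of filtered modules $M(n) \to M(n)[-n]$.
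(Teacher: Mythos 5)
Your proof is correct and follows essentially the same route as the paper: exactness at $A(k)$ and $M(n)$ is exactly proposition \ref{prop_ker_phi}, and surjectivity comes from theorem \ref{thm_g}. You simply make explicit what the paper calls "an easy consequence of theorem \ref{thm_g}", by writing out the two-diagonal linear system in the $g_n^d$-coordinates via proposition \ref{prop_g_pm}; the computation is correct (including the $g_n^{-1}=0$ convention), and the only implicit step you rely on — that $\Phi_n^\eps$ commutes with the locally finite sum $\sum_d a_d g_n^d$ — is harmless given proposition \ref{prop_g_borne}.
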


\begin{proof}
  Like for corollary \ref{cor_phi_exact}, the only thing left to
  prove after proposition \ref{prop_ker_phi} is the surjectivity
  of $\Phi_n^\eps$, but it is an easy consequence of theorem \ref{thm_g}.
\end{proof}

\begin{coro}
  Let $n\in \N^*$ and $\alpha\in M(n)$. There is a unique sequence $(a_d)_{d\in \N}$
  with $a_d\in A(k)$ such that for any $q\in I^n(K)$ the infinite sum
  $\sum_{d\in \N}a_df_n^d(q)$ exists in $A(K)$ and is equal to $\alpha(q)$.
  Furthermore, for all $d\in \N$, $a_d = \alpha^{d+}(0)$.
\end{coro}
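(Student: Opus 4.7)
My plan is to convert the $g_n^d$-expansion of $\alpha$ (from Theorem \ref{thm_g}) into an $f_n^d$-expansion using the explicit triangular change of basis in Proposition \ref{prop_f_g}, and then identify the coefficients via iterated application of $\Phi^+$.

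First, by Theorem \ref{thm_g} I would write $\alpha = \sum_d b_d g_n^d$, a sum that is pointwise finite by Proposition \ref{prop_g_borne}. Proposition \ref{prop_f_g} gives $g_n^d = \sum_{k=\lfloor d/2\rfloor+1}^d c_{d,k} f_n^k$ with explicit $c_{d,k}\in A(k)$, so formal interchange of sums dictates setting $a_k := \sum_d b_d c_{d,k}$. Since the nonzero $c_{d,k}$ require $k\le d\le 2k-1$, this is a finite sum and $a_k \in A(k)$ is well-defined.

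To verify the convergence $\sum_d a_d f_n^d(q)=\alpha(q)$ in $A(K)$, I would fix $q \in I^n(K)$ and pick $D$ with $g_n^d(q) = 0$ for $d > D$ (Proposition \ref{prop_g_borne}). For $N \ge D$, a careful interchange of the two finite sums in $S_N := \sum_{k=0}^N a_k f_n^k(q)$ gives
\[ S_N - \alpha(q) = \sum_{d=N+1}^{2N-1} b_d \sum_{k=\lfloor d/2\rfloor+1}^N c_{d,k} f_n^k(q), \]
whose every $f_n^k(q)$ satisfies $k \ge \lfloor (N+1)/2 \rfloor + 1$, so the entire tail lies in $A^{\pgq m_N}(K)$ with $m_N\to\infty$. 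Separatedness of the filtration on $A(K)$ then forces $S_N \to \alpha(q) \in A(K)$, so the sum exists in $A(K)$ with value $\alpha(q)$.

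For uniqueness and the formula $a_d = \alpha^{d+}(0)$, I would exploit Proposition \ref{prop_phi_pi}: $\Phi^+(f_n^d) = f_n^{d-1}$ (and $\Phi^+(f_n^0) = 0$). Because $\Phi^+$ has degree $-n$ and hence is continuous in the filtration topology, applying it to $\alpha = \sum_d a_d f_n^d$ gives $\alpha^{d+} = \sum_{e \ge 0} a_{e+d} f_n^e$ by induction on $d$. Evaluating at $0$ and using $f_n^0 = 1$ together with $f_n^e(0) = 0$ for $e \ge 1$ yields $a_d = \alpha^{d+}(0)$, which pins down the coefficients uniquely and gives the stated formula. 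The main obstacle is the convergence step: the bookkeeping of the doubly truncated sum combined with the filtration estimate ensuring the tail lies in arbitrarily high filtration is the crux of the argument; uniqueness and the explicit formula then follow almost formally from the structure of $\Phi^+$ on the $f_n^d$.
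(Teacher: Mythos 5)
Your proof is correct and follows essentially the same route as the paper: existence comes from the $g_n^d$-expansion of Theorem \ref{thm_g} converted via the triangular change of basis in Proposition \ref{prop_f_g} (with the bound $d\le 2k-1$ guaranteeing the coefficients $a_k$ are finite sums), and uniqueness plus the formula $a_d=\alpha^{d+}(0)$ come from iterating $\Phi^+$ using Proposition \ref{prop_phi_pi}. You are somewhat more explicit than the paper on the convergence step, writing out the tail $S_N-\alpha(q)$ and using separatedness of the filtration; the paper compresses this to ``a decomposition valid pointwise'', and for uniqueness works modulo $M^{\pgq dn}(n)$ via Proposition \ref{prop_ker_phi} rather than invoking continuity of $\Phi^+$, but these are the same idea phrased differently.
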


\begin{proof}
  If such a sequence exists, then using proposition \ref{prop_ker_phi}
  we find that $\alpha^{i+}(0)\equiv a_i$ modulo $A^{\pgq dn}(k)$
  for all $i\ppq d$, so for a fixed $i$ we can make $d$ go to
  infinity, and we find that indeed $a_d = \alpha^{d+}(0)$, which
  shows uniqueness.
  
  For existence, write $\alpha = \sum_d b_d g_n^d$, and decompose each
  $g_n^d$ in terms of the $f_n^i$ using proposition \ref{prop_f_g}.
  Then we find a decomposition of $\alpha$ in terms of $f_n^d$ which
  is valid pointwise, and the $a_d$ we find are well-defined in $A(k)$ since
  each $a_d$ is a combination of a finite number of $b_i$ (using that
  $f_n^i$ appears appears in the decomposition of $g_n^d$ only if
  $d\ppq 2i$).
\end{proof}

\begin{rem}
  In particular, any invariant of $I^n$ with values in
  $H^d(-,\mu_2)$ may be lifted to an invariant with values in $I^d$.
\end{rem}

\begin{rem}
  If $k$ is not a formally real field, then for large enough $d$
  we have $\{-1\}^d=0$, and thus according to formula (\ref{eq_fn_moins_phi})
  $f_n^d(-\phi)=0$ for any $\phi\in \Pf_n(K)$. This implies
  that in this case, for any $q\in I^n(K)$ we have $f_n^d(q)=0$
  for large enough $d$ (for the same reasons as in corollary
  \ref{cor_sum_f}), and so we may use the $f_n^d$ instead
  of the $g_n^d$ in the theorem (with $G(\alpha) = (\alpha^{d+}(0))_d$).
  In the extreme case where $-1$
  is a square in $k$, we actually even get $f_n^d=g_n^d$, as
  can be seen from proposition \ref{prop_f_g}.
  On the other hand, example \ref{ex_moins_pfis} shows that we
  cannot use the $f_n^d$ if $k$ is formally real. What happens
  in this case is that an arbitrary infinite combination of the
  $f_n^d$ does correspond to a combination of the $g_n^d$ (using
  proposition \ref{prop_f_g}), but with coefficients in the completion
  of $A$ with respect to its filtration.
\end{rem}

\begin{rem}
  We may construct cohomological invariants $\alpha$ such that, even
  though the degree of $\alpha(q)$ is bounded for fixed $q$, it is
  unbounded when $q$ varies (for instance, $\alpha = \sum_d g_n^d$).
  This reflects in some sense the ``infinite'' nature of $I^n$,
  and it is a behaviour that does not appear for invariants of algebraic
  groups. The submodule $M'(n)$ of uniformly bounded cohomological invariant
  is the submodule generated by the $f_n^d$ (or by the $g_n^d$). We may
  write that $M(n) = \Inv\left( I^n,\varinjlim H^{\ppq d}(-,\mu_2)\right)$,
  while $M'(n) = \varinjlim \Inv\left( I^n, H^{\ppq d}(-,\mu_2)\right)$.
\end{rem}

\section{Algebra structure}

Since $M(n)$ is not only a $A(k)$-module, but also
an algebra, we wish to understand how the product can be expressed
in terms of the basic elements $f_n^d$.

For this section, if $d\in \N$, and $p,q\in \N$ are such that $p+q\ppq d$, we set
\[ C^d_{p,q}= \frac{d!}{p!\cdot q!\cdot (d-p-q)!}. \]
This is just a more compact notation for the usual multinomial $\binom{d}{p,q,d\minus p\minus q}$.

\begin{prop}\label{prop_prod_phi}
  Let $n\in \N^*$, and $\eps=\pm 1$. Then for any $\alpha,\beta\in M(n)$:
  \[ \Phi^\eps(\alpha\beta) = \Phi^\eps(\alpha)\beta + \alpha\Phi^\eps(\beta)
             + \eps\{-1\}^n\Phi^\eps(\alpha)\Phi\eps(\beta). \]
\end{prop}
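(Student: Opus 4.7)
The plan is to work directly from the defining identity of $\Phi^\eps$ given in the propdef: for any $\gamma \in M(n)$, $q \in I^n(K)$ and $\phi \in \Pf_n(K)$, one has
\[ \gamma(q + \eps\phi) = \gamma(q) + \eps f_n(\phi)\cdot \gamma^\eps(q). \]
Applying this to both $\alpha$ and $\beta$, I would expand the product $(\alpha\beta)(q+\eps\phi) = \alpha(q+\eps\phi)\cdot \beta(q+\eps\phi)$ over an arbitrary extension $L/K$, then read off the coefficient of $f_n(\phi)$ to identify $(\alpha\beta)^\eps(q)$.

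The expansion produces four terms, namely $\alpha(q)\beta(q)$, two cross terms of the form $\eps f_n(\phi)[\alpha^\eps(q)\beta(q) + \alpha(q)\beta^\eps(q)]$, and a quadratic term $f_n(\phi)^2\,\alpha^\eps(q)\beta^\eps(q)$. The single ingredient needed to simplify this is the relation $f_n(\phi)^2 = \{-1\}^n f_n(\phi)$ for $\phi \in \Pf_n$, which was already used in the proof of proposition \ref{prop_phi_pm} (it follows from $\phi^2 = 2^n\phi$ together with equation (\ref{eq_prod_fn}) and the definition of $\{-1\}$). Using also $\eps^2 = 1$, one rewrites $\{-1\}^n f_n(\phi) = \eps f_n(\phi) \cdot \eps\{-1\}^n$, so that the whole expansion takes the form
\[ \alpha(q)\beta(q) + \eps f_n(\phi)\Bigl[\alpha^\eps(q)\beta(q) + \alpha(q)\beta^\eps(q) + \eps\{-1\}^n \alpha^\eps(q)\beta^\eps(q)\Bigr]. \]

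The remaining—and only subtle—step is to conclude that the bracketed quantity is indeed $(\alpha\beta)^\eps(q)$. For fixed $q$, the assignment $\phi \mapsto (\alpha\beta)(q+\eps\phi)$ defines an element of $\Inv(\Pf_n,A)$ over $K$, so by lemma \ref{lem_a_inv} its decomposition as (constant) + (element of $A(K)$)$\cdot f_n$ is unique. Comparing with the propdef's decomposition then forces the bracket to equal $(\alpha\beta)^\eps(q)$, yielding the stated formula. I do not expect any genuine obstacle here; the only point requiring care is the bookkeeping of the sign $\eps$ when moving $\{-1\}^n$ through the factor $\eps$, which is what produces the $\eps\{-1\}^n$ coefficient in the cross term of the final identity.
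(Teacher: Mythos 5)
Your proof is correct and follows essentially the same route as the paper: expand $(\alpha\beta)(q+\eps\phi)$ via the defining identity of $\Phi^\eps$ applied to each factor, simplify using $f_n(\phi)^2 = \{-1\}^n f_n(\phi)$ (and $\eps^2=1$), and read off the coefficient of $\eps f_n(\phi)$. The paper presents this more tersely, leaving the uniqueness step from lemma \ref{lem_a_inv} implicit, but the content is identical.
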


\begin{proof}
  Let $q\in I^n(K)$ and $\phi\in \Pf_n(K)$. Then
  \begin{align*}
    (\alpha\beta)(q+\eps\phi)
       &= (\alpha(q)+\eps f_n(\phi)\alpha^\eps(q))\cdot (\beta(q)+\eps f_n(\phi)\beta^\eps(q)) \\
       &= (\alpha\beta)(q)
           + \eps f_n(\phi)\left( (\alpha^\eps\beta)(q) + (\alpha\beta^\eps)(q)
           + \eps \{-1\}^n(\alpha^\eps\beta^\eps)(q)\right). \qedhere
  \end{align*}
\end{proof}

\begin{prop}\label{prop_prod}
  Let $n\in \N^*$ and $s,t\in \N$. Then 
  \[ f_n^s \cdot f_n^t = \sum_{d=\max(s,t)}^{s+t} C^d_{d-s,d-t} \{-1\}^{n(s+t-d)} f_n^d. \]
\end{prop}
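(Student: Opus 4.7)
The plan is induction on $s + t$, using the shifting operator $\Phi = \Phi_n^+$ as a discrete derivative to reduce to smaller cases, with Proposition \ref{prop_ker_phi} (kernel of $\Phi$ equals the constant invariants) to pin down the ``integration constant''. The base case is easy: if $s = 0$, then $f_n^0 \cdot f_n^t = f_n^t$, while the right-hand side collapses to the single term $d = t$, giving $C^t_{t,0}\{-1\}^0 f_n^t = f_n^t$.

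For the inductive step with $s, t \pgq 1$, I would apply $\Phi$ to both sides. On the left, Proposition \ref{prop_prod_phi} combined with $\Phi(f_n^k) = f_n^{k-1}$ (Proposition \ref{prop_phi_pi}) gives
\[ \Phi(f_n^s f_n^t) = f_n^{s-1} f_n^t + f_n^s f_n^{t-1} + \{-1\}^n f_n^{s-1} f_n^{t-1}, \]
and each of the three products on the right expands via the induction hypothesis. On the right-hand side of the claimed formula, applying $\Phi$ term-by-term and reindexing $d \mapsto d - 1$ yields another explicit combination of the $f_n^e$'s. Comparing coefficients of $f_n^e$ reduces the identity to the single multinomial Pascal relation
\[ C^{e+1}_{a,b,c} = C^e_{a-1,b,c} + C^e_{a,b-1,c} + C^e_{a,b,c-1} \qquad (a+b+c=e+1), \]
applied with $(a,b,c) = (e+1-s,\, e+1-t,\, s+t-e-1)$; the three summands match exactly the three Leibniz contributions (one for each way a common index in the multinomial decreases). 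The paper's convention extending binomials to all integers handles the boundary values of $e$ automatically.

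Once $\Phi$ applied to both sides agree, their difference lies in the kernel of $\Phi$ and is hence a constant invariant by Proposition \ref{prop_ker_phi}. Evaluating at $0 \in I^n(K)$ gives $0$ on the left (since $f_n^s(0) = 0$ for $s \pgq 1$) and $0$ on the right (every surviving term has $d \pgq \max(s,t) \pgq 1$), so the constant vanishes and the formula follows.

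The main obstacle is purely combinatorial bookkeeping: lining up the three index ranges that arise from the Leibniz rule and checking that the multinomial Pascal identity produces the correct coefficient for each $f_n^e$. Nothing here is deep, but it requires care at the edges of the summation range, where the extended-binomial convention must be applied correctly.
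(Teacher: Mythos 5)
Your proof is correct and is essentially the same as the paper's: both apply $\Phi$ to the claimed identity, use Proposition \ref{prop_prod_phi} and Proposition \ref{prop_phi_pi} to get the Leibniz expansion $\Phi(f_n^s f_n^t) = f_n^{s-1}f_n^t + f_n^s f_n^{t-1} + \{-1\}^n f_n^{s-1}f_n^{t-1}$, invoke the inductive hypothesis on the three smaller products, match coefficients via the multinomial Pascal identity, and then fix the integration constant by evaluating at $0$ (the paper induces on $(s,t)$ lexicographically rather than on $s+t$, but that is immaterial). Your write-up makes the Pascal step and the role of Proposition \ref{prop_ker_phi} somewhat more explicit than the paper does, but the underlying argument is the same.
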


\begin{proof}
  First note that both side of the equality have the same value in $0$ (which is
  $1$ if $s=t=0$ and $0$ otherwise). So we just need to show that applying $\Phi$
  to both sides of the equation gives the same expression.

  Now proposition \ref{prop_prod_phi} gives:
  \begin{equation}\label{eq_prod}
    \Phi\left( f_n^s \cdot f_n^t\right) = f_n^s\cdot f_n^{t-1} + f_n^{s-1}\cdot f_n^t
         + \{-1\}^n f_n^{s-1}\cdot f_n^{t-1}.  
  \end{equation}

  We proceed by induction, say on $(s,t)$ with lexicographical order.
  First the result is clear if $s=0$ or $t=0$. 

  Then by induction we can replace each term in (\ref{eq_prod}) and rearrange
  them to find for $\Phi\left( f_n^s\cdot f_n^t\right)$:
  \begin{equation}\label{eq_prod_2}
    \binom{s}{t} \{-1\}^{nt} f_n^{s-1} +  \binom{s+t}{t} f_n^{s+t-1}
          +  \sum_{d=s}^{s+t-2} C^{d+1}_{d-s+1,d-t+1} \{-1\}^{n(s+t-d-1)} f_n^d
  \end{equation}
  where for the coefficient before $f_n^{s-1}$ we use $\binom{s-1}{t} + \binom{s-1}{t-1} = \binom{s}{t}$,
  for that of $f_n^{s+t-1}$ we use
  $\binom{s+t-1}{t} + \binom{s+t-1}{t-1} = \binom{s+t}{t}$, and for the other terms we use
  \begin{equation*}
    C^d_{d-s+1,d-t} + C^d_{d-s,d-t+1} + C^d_{d-s+1,d-t+1} = C^{d+1}_{d-s+1,d-t+1}.
  \end{equation*}

  We can them compute that applying $\Phi$ to the right-hand side of the equality
  in the statement of the proposition yields exaclty (\ref{eq_prod_2}).
\end{proof}

Of course there is a corresponding formula for the products of the $g_n^d$,
but it turns out that it is much more involved, and we will not address
it here. This means that although we have a nice module isomorphism between
$M(n)$ and $A(k)^\N$, transporting the algebra structure of $M(n)$ to
$A(k)^\N$ is not as convenient. On the other hand, if we use the $f_n^d$
we only have a module isomorphism between $M(n)$ and a submodule of $A(k)^\N$
which is hard to describe, but we can transport the product in reasonably
easy way.

There are several cases where the formula of proposition \ref{prop_prod}
can be greatly simplified by studying the parity of the multinomials that appear.
We introduce some notations: if $s,t\in \N$, we write $s\lor t$
(resp. $s\land t$) for the integer obtained by applying a bitwise \emph{or}
(resp. a bitwise \emph{and}) to the binary representations of $s$ and $t$.
In particular $s\lor t +s\land t = s+t$.

\begin{lem}\label{lem_binom}
  Let $d\in \N$, and $s,t\in \N$ such that $\max(s,t)\ppq d\ppq s+t$. Then
  $C^d_{d-s,d-t}$ is odd iff $d=s\lor t$.
\end{lem}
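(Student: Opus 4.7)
The plan is to invoke Kummer's theorem on the $2$-adic valuation of multinomial coefficients. Writing $p = d-s$, $q = d-t$, $r = s+t-d$, the hypothesis $\max(s,t)\ppq d\ppq s+t$ is exactly what ensures $p,q,r\in \N$, and by definition $p+q+r=d$ and $C^d_{d-s,d-t}=\frac{d!}{p!\,q!\,r!}$. Kummer tells us that this multinomial is odd if and only if there are no carries when performing the base-$2$ addition $p+q+r$, which in turn is equivalent to the binary supports of $p$, $q$, $r$ being pairwise disjoint (and hence, since they sum to $d$, having union equal to the binary support of $d$).

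It then remains to show that this disjointness condition is equivalent to $d = s\lor t$. Suppose first that $p$, $q$, $r$ have pairwise disjoint binary supports; then $s = q+r$ with $q$ and $r$ disjoint forces $s = q\lor r$, and similarly $t = p\lor r$, so $s\lor t = p\lor q\lor r = d$. Conversely, assume $d=s\lor t$; from the identity $s+t = (s\lor t)+(s\land t)$ we recover $r=s+t-d = s\land t$, and then thinking of integers as their binary supports we get $p=d-s = (s\lor t)\setminus s = t\setminus s$ and symmetrically $q = s\setminus t$. The three numbers $p=t\setminus s$, $q=s\setminus t$, $r=s\land t$ manifestly have pairwise disjoint binary supports, closing the loop.

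There is no genuine obstacle beyond invoking Kummer's theorem; the only thing to handle carefully is the bijection between integers and their binary supports so that bitwise $\lor$, $\land$ can be manipulated as set-theoretic union and intersection. Once this is made explicit, the two implications are a couple of lines each.
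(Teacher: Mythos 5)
Your proof is correct and takes essentially the same route as the paper's: reduce oddness of the multinomial to pairwise disjointness of the binary supports of $p=d-s$, $q=d-t$, $r=s+t-d$, then relate that disjointness to $d=s\lor t$. The only differences are cosmetic: you invoke Kummer's theorem as a black box where the paper rederives the carry characterization from Legendre's formula $v_2(a!)=a-f(a)$ and the subadditivity of $f$, and for the equivalence with $d=s\lor t$ your explicit bitwise identities ($p=t\setminus s$, $q=s\setminus t$, $r=s\land t$) give both directions directly, whereas the paper does the reverse direction by a case analysis on the lowest differing bit and waves off the forward direction as obvious.
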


\begin{proof}
  It is well-known that for any $a\in \Z$, the $2$-adic valuation of $a!$
  is $a-f(a)$ where $f(a)$ is the number of $1$s in the binary representation
  of $a$. Then :
  \begin{align*}
    v_2\left( C^d_{d-s,d-t} \right) =& (d-f(d)) - (s+t-d -f(s+t-d)) \\
        &- (d-s - f(d-s)) - (d-t - f(d-t)) \\
       =&  f(s+t-d) + f(d-s) + f(d-t) - f(d).
  \end{align*}

  But it is easily seen that for any $a,b\in \Z$, $f(a+b)\ppq f(a)+f(b)$,
  with equality iff $a\land b =0$. Thus $C^d_{d-s,d-t}$ is odd
  iff $s+t-d$, $d-s$ and $d-t$ have pairwise disjoint binary representations.

  We claim this is equivalent to $d=s \lor t$. Indeed, if $d=s\lor t$
  it is obvious, and if $d\neq s\lor t$, consider the weakest bit where
  $d$ and $s\lor t$ differ; there are several possibilities
  for the bits of $s$, $t$ and $d$ in this slot: 
  $s$ has 1 and $d$ has 0, $t$ has 1 and $d$ has 0, or $s$ and $t$ have 0
  and $d$ has 1. In all these cases, at least two numbers among $d-s$, $d-t$
  and $s+t-d$ have a 1 in this slot, and their binary representations are thus
  not disjoint.
\end{proof}

Then we can state:

\begin{coro}\label{cor_prod_cohom}
  Let $n\in \N^*$ and $s,t\in \N$. If $A = H$, then :
  \[ u_{ns}^{(n)} \cup u_{nt}^{(n)} = (-1)^{n(s\land t)}\cup u_{n(s\lor t)}^{(n)}. \]
\end{coro}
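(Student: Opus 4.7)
The plan is simply to combine Proposition \ref{prop_prod} with Lemma \ref{lem_binom}. Proposition \ref{prop_prod} gives the universal product formula
\[ f_n^s \cdot f_n^t = \sum_{d=\max(s,t)}^{s+t} C^d_{d-s,d-t} \{-1\}^{n(s+t-d)} f_n^d, \]
valid for either choice of $A$. When $A = H$, the target $A(K) = H^*(K,\mu_2)$ is $2$-torsion, so each integer coefficient $C^d_{d-s,d-t}$ only matters modulo $2$.

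Lemma \ref{lem_binom} says precisely that $C^d_{d-s,d-t}$ is odd if and only if $d = s \lor t$. Hence in mod $2$ cohomology only a single term of the sum survives, namely the one for $d = s\lor t$. For this value of $d$, using the elementary bitwise identity $s + t = (s\lor t) + (s\land t)$, we get $s + t - d = s\land t$.

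Substituting $f_n^d = u_{nd}^{(n)}$, the sum reduces to
\[ u_{ns}^{(n)} \cup u_{nt}^{(n)} = \{-1\}^{n(s\land t)}\cup u_{n(s\lor t)}^{(n)}, \]
which is the claimed formula. There is no genuine obstacle: all the combinatorial work was carried out in Lemma \ref{lem_binom}, and the rest is an application of the two-torsion nature of mod $2$ cohomology to discard the even multinomial coefficients appearing in Proposition \ref{prop_prod}.
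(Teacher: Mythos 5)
Your proof is correct and is essentially the same as the paper's: both apply Lemma \ref{lem_binom} to the product formula of Proposition \ref{prop_prod}, use that $H^*(k,\mu_2)$ has characteristic $2$ to discard the even multinomial coefficients, and identify the surviving exponent via $s+t-(s\lor t)=s\land t$. Your write-up is a bit more explicit (and in fact carries the factor of $n$ in the exponent more carefully than the paper's one-line proof does), but there is no substantive difference in approach.
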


\begin{proof}
  Since $H^*(k,\mu_2)$ is a ring of characteristic $2$, using lemma \ref{lem_binom},
  we see that the only potentially non-zero term in the formula of proposition \ref{prop_prod}
  is $\{-1\}^{s\land t}f_n^{s\lor t}$.
\end{proof}

\begin{rem}
  This is very reminiscent of the formula for the product of Stiefel-Whitney
  classes, since $w_s\cup w_t= (-1)^{s\land t}\cup w_{s\lor t}$. When $-1$ is a square,
  this is easily explained by the fact that $u^{(1)}_d$ coincides with the Stiefel-Whitney
  map $w_d$ (see remark \ref{rem_sw_moins_1}), but in general $w_d$ is not well-defined
  on Witt classes so the formulas are really different phenomena.
\end{rem}

\begin{coro}\label{cor_prod_moins_1}
  Let $n\in \N^*$ and $s,t\in \N$. If $-1$ is a square in $k$, then
  $f_n^s\cdot f_n^t$ equals $f_n^{s+t}$ if $s\land t = 0$, and $0$ otherwise.
\end{coro}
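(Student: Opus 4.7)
The plan is to substitute directly into Proposition \ref{prop_prod} and exploit two independent simplifications that flow from the hypothesis that $-1$ is a square in $k$.

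First I would check that $\{-1\} = 0$ in $A(k)$. In the cohomological case this is immediate, since $\{-1\}$ is the Galois symbol of the square class of $-1$. In the Witt case, $\pfis{-1} = 1 - \fdiag{-1}$ equals $0$ in $W(k)$ because $\fdiag{-1} = \fdiag{1}$, and hence $\{-1\} = f_1(\pfis{-1}) = 0$.

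With this in hand, every summand in Proposition \ref{prop_prod} with $d < s+t$ carries a factor $\{-1\}^{n(s+t-d)}$ with exponent at least $1$ (using $n\geq 1$), and therefore vanishes. Only the top term survives, giving
\[ f_n^s \cdot f_n^t = C^{s+t}_{t,s}\, f_n^{s+t} = \binom{s+t}{s}\, f_n^{s+t}. \]

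Next I would argue that $A(k)$ has characteristic $2$, so that $\binom{s+t}{s}$ may be read modulo $2$. For $A = H$ this is tautological; for $A = W$, the identity $2q = \pfis{-1}q$ combined with the vanishing of $\pfis{-1}$ in $W(K)$ for every extension $K/k$ (since $-1$ remains a square) gives $2 = 0$ in $W(K)$. Finally I would invoke Lemma \ref{lem_binom} with the choice $d = s+t$, for which $d - s = t$, $d - t = s$ and $s + t - d = 0$; it asserts that $\binom{s+t}{s}$ is odd precisely when $s + t = s \lor t$, equivalently $s \land t = 0$. This yields $f_n^s \cdot f_n^t = f_n^{s+t}$ when $s \land t = 0$ and $0$ otherwise. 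I do not expect any real obstacle here: the whole computation has been done in Proposition \ref{prop_prod} and Lemma \ref{lem_binom}, and the two simplifications above are elementary once one notices that the hypothesis kills both $\{-1\}$ and the integer $2$ simultaneously.
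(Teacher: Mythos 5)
Your proof is correct and takes essentially the same route as the paper, combining Proposition \ref{prop_prod} and Lemma \ref{lem_binom} with the two observations that $\{-1\}=0$ and that $A(k)$ has characteristic~$2$. You simply apply the two simplifications in the opposite order to the paper (first isolating the $d=s+t$ term via $\{-1\}=0$, then reducing $\binom{s+t}{s}$ modulo~$2$), whereas the paper first invokes Corollary \ref{cor_prod_cohom} to isolate $d=s\lor t$ and then kills the residual factor $\{-1\}^{n(s\land t)}$.
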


\begin{proof}
  Note that in this situation $A(k)$ is also a ring of characteristic $2$,
  so the same reasoning as in corollary \ref{cor_prod_cohom} applies,
  but this time if $s\land t\neq 0$ the term is also 0.
\end{proof}

\begin{rem}
  Consider the case $A=H$, and the submodule $M'(n)\subset M(n)$ generated
  by the $u_{nd}^{(n)}$, which is the subalgebra of cohomological invariants with
  uniformly bounded degree. Then from corollary \ref{cor_prod_cohom} we find a very
  simple algebra presentation of $M'(n)$: the (commuting) generators are $x_i=u_{n2^i}^{(n)}$,
  and the relations are given by $x_i^2 = \{-1\}^{n2^i}x_i$.
\end{rem}

\section{Restriction from $I^n$ to $I^{n+1}$}

For any $m,n\in \N^*$ with $m\pgq n$, there is an obvious restriction morphism
\begin{equation}\label{eq_rho}
  \foncdef{\rho_{n,m}}{M(n)}{M(m)}{\alpha}{\alpha_{|I^m}.} 
\end{equation}

Given the definition of $f_n^d$, if we want to express $(f_n^d)_{|I^{n+1}}$
in terms of the $f_{n+1}^k$, it is natural to try to express $\pi_n^d$
in terms of the $\pi_{n+1}^k$ in $GW(K)$.

\begin{prop}\label{prop_restr_pi}
  Let $n\in \N^*$. For any $d\in \N^*$, we have 
  \[ \pi_n^d = \sum_{\frac{d}{2}\ppq k\ppq d} \binom{k}{d-k} 2^{(d-k)(n-1)} \pi_{n+1}^k. \]
\end{prop}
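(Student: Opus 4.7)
The plan is to work entirely at the level of generating series. Recall that $(\pi_n)_t(x) = \lambda_{h_n(t)}(x)$ and $(\pi_{n+1})_s(x) = \lambda_{h_{n+1}(s)}(x)$, so an identity of the form $h_n(t) = h_{n+1}(s(t))$ for some $s(t) \in t + t^2\Z[[t]]$ would immediately yield
\[ (\pi_n)_t(x) = (\pi_{n+1})_{s(t)}(x), \]
and then it is enough to extract the coefficient of $t^d$ on both sides.

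The key is to identify $s(t)$. Setting $p_n(t) = t + 2^{n-1}t^2$, the proof of lemma \ref{lem_a_b_h} shows that $x_{n+1} = p_n \circ x_n$. Inverting for the composition gives $h_{n+1} = h_n \circ p_n^{\circ -1}$, or equivalently
\[ h_n = h_{n+1} \circ p_n. \]
Thus $s(t) = p_n(t) = t + 2^{n-1}t^2$ does the job, and applying $\lambda$ at $x$ we obtain the identity of power series
\[ \sum_{d\in \N} \pi_n^d(x)\, t^d = \sum_{k\in \N} \pi_{n+1}^k(x)\, (t + 2^{n-1}t^2)^k. \]

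It remains to extract the coefficient of $t^d$ on the right. Writing
\[ (t + 2^{n-1}t^2)^k = t^k(1+2^{n-1}t)^k = \sum_{j=0}^{k} \binom{k}{j} 2^{(n-1)j}\, t^{k+j}, \]
the contribution to $t^d$ comes from $k+j=d$, i.e.\ $j = d-k$, with the constraint $0\ppq d-k\ppq k$, that is $\lceil d/2\rceil \ppq k\ppq d$. This directly produces the coefficient $\binom{k}{d-k}\,2^{(n-1)(d-k)}$ in front of $\pi_{n+1}^k$, which is exactly the formula claimed.

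There is no real obstacle here — the only thing to be careful about is the direction of the composition when inverting $x_{n+1} = p_n\circ x_n$, and keeping track of the range of summation in the binomial expansion. Everything else is a one-line computation coming from the already established recursion for $x_n$.
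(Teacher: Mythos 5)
Your proof is correct and is essentially identical to the paper's: both derive $h_n = h_{n+1}\circ p_n$ from $x_{n+1}=p_n\circ x_n$, conclude $(\pi_n)_t=(\pi_{n+1})_{p_n(t)}$, and expand $(t+2^{n-1}t^2)^k$ by the binomial theorem to extract the coefficient of $t^d$. There is nothing to add.
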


\begin{proof}
  We define $p_n(t) = t + 2^{n-1}t^2\in \Z[t]$. Then recall that $(\pi_n)_t=\lambda_{h_n(t)}$
  where $h_n = x_n^{\circ -1}$, and $x_n$ is defined recursively by $x_{n+1} = p_n\circ x_n$.
  Thus we have the formula $h_n = h_{n+1}\circ p_n$, and
  \[ (\pi_n)_t = (\pi_{n+1})_{p_n(t)}. \]
  Therefore we find
  \begin{align*}
    \sum_d \pi_n^d\cdot t^d &= \sum_k \pi_{n+1}^k(t+2^{n-1}t^2)^k \\
      &= \sum_k \sum_{k\ppq d\ppq 2k} \binom{k}{d-k}2^{(d-k)(n-1)}\pi_{n+1}^k\cdot t^d
  \end{align*}
  which gives the result.
\end{proof}

Then we deduce the corresponding results for our invariants.

\begin{coro}\label{cor_restr_w}
  Let $n,d\in \N^*$. If $A=W$ then
  \[ (\bar{\pi}_n^d)_{|I^{n+1}}
         = \sum_{\frac{d}{2}\ppq k\ppq d} \binom{k}{d-k}\pfis{-1}^{(d-k)(n-1)} \bar{\pi}_{n+1}^k.  \]
\end{coro}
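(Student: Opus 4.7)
The plan is to deduce the corollary directly from Proposition \ref{prop_restr_pi} by transferring the identity in $GW(K)$ down to $W(K)$, using the two elementary facts that (i) $f_{nd}$ is the identity when $A=W$, so that $\bar\pi_n^d(q)$ is literally the Witt class of $\pi_n^d(\hat q)$, and (ii) in $W(K)$ multiplication by the integer $2$ coincides with multiplication by $\pfis{-1}$.

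First, I take $q\in I^{n+1}(K)$ and pass to its lift $\hat q\in \hat I^{n+1}(K)\subset \hat I^n(K)$. Proposition \ref{prop_restr_pi}, evaluated at $\hat q$, gives an equality in $\hat I^{nd}(K)$:
\[ \pi_n^d(\hat q) = \sum_{\frac{d}{2}\ppq k\ppq d} \binom{k}{d-k}\, 2^{(d-k)(n-1)}\, \pi_{n+1}^k(\hat q). \]
Applying the Witt class isomorphism $\hat I^{nd}(K)\Isom I^{nd}(K)$ and unwinding the definition of $\bar\pi_n^d$, $\bar\pi_{n+1}^k$ (recall that for $A=W$ the map $f_{nd}$ is the identity), I obtain
\[ \bar\pi_n^d(q) = \sum_{\frac{d}{2}\ppq k\ppq d} \binom{k}{d-k}\, 2^{(d-k)(n-1)}\, \bar\pi_{n+1}^k(q) \]
as an identity in $I^{nd}(K)\subset W(K)$.

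Finally, I invoke the preliminary identity $2x = \pfis{-1}x$ valid for every $x\in W(K)$, and iterate it $(d-k)(n-1)$ times on each term $\bar\pi_{n+1}^k(q)$. This replaces the integer factor $2^{(d-k)(n-1)}$ by $\pfis{-1}^{(d-k)(n-1)}$ and yields exactly the formula in the statement. The filtration is consistent since $k\pgq d/2$ implies
\[\pfis{-1}^{(d-k)(n-1)}\cdot \bar\pi_{n+1}^k(q) \in I^{(n+1)k + (d-k)(n-1)}(K) = I^{nd + (2k-d)}(K) \subset I^{nd}(K).\]

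There is no real obstacle here: once Proposition \ref{prop_restr_pi} is granted, the corollary is essentially a notational translation. The only points requiring care are the compatibility of $\bar\pi_n^d$ with the Grothendieck-level operation $\pi_n^d$ under Witt reduction, and the substitution of $\pfis{-1}$ for $2$, both of which are immediate from the definitions recalled in the introduction.
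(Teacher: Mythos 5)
Your proof is correct and follows the same route as the paper: reduce to Proposition \ref{prop_restr_pi} at the Grothendieck–Witt level and then substitute $\pfis{-1}$ for $2$ under the isomorphism $\hat I^{nd}(K)\Isom I^{nd}(K)$. The paper's proof is just a one-line version of exactly this.
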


\begin{proof}
  This is an immediate consequence of the proposition, given that in
  $W(K)$ we have $\pfis{-1}=2$.
\end{proof}

\begin{coro}\label{cor_restr_cohom}
  Let $n,d\in \N^*$. If $A=H$ then
  \[   (u_{nd}^{(n)})_{|I^{n+1}} = \left\{ \begin{array}{lc}
                                      (-1)^{m(n-1)}\cup u_{(n+1)m}^{(n+1)} & \text{if $d=2m$} \\
                                      0 & \text{if $d$ odd}
                                    \end{array} \right.     \]
\end{coro}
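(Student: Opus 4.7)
The strategy is to apply $f_{nd}=e_{nd}$ to the identity of Proposition~\ref{prop_restr_pi} evaluated at $q\in I^{n+1}(K)$, and to track the fundamental filtration of each summand. The main subtlety is that naively reducing the integer coefficients $2^{(d-k)(n-1)}$ modulo~$2$ after landing in $H$ would wipe out every summand as soon as $n\pgq 2$; instead one should first pass to $W(K)$, where $2=\pfis{-1}\in I^1(K)$, so that multiplication by the integer $2$ shifts the fundamental filtration up by one.

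Reading the identity of Proposition~\ref{prop_restr_pi} in $W(K)$ and substituting $2=\pfis{-1}$, the $k$-th summand becomes
\[ 2^{(d-k)(n-1)}\,\pi_{n+1}^k(q) \;=\; \pfis{-1}^{(d-k)(n-1)}\,\pi_{n+1}^k(q) \;\in\; I^{2k+d(n-1)}(K), \]
using $\pi_{n+1}^k(q)\in I^{(n+1)k}(K)$ and $(n+1)k + (d-k)(n-1) = 2k + d(n-1)$.

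Because $e_{nd}$ vanishes on $I^{nd+1}(K)$, and $2k+d(n-1)\pgq nd+1$ exactly when $k>d/2$, every summand with $k>d/2$ disappears. Combined with the summation range $d/2\ppq k\ppq d$, only $k=d/2$ can possibly survive; in particular every term vanishes when $d$ is odd, yielding the corresponding case in the statement.

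For $d=2m$, only the term $k=m$ contributes, with binomial coefficient $\binom{m}{m}=1$, and the multiplicativity formula~(\ref{eq_prod_fn}) gives
\[ e_{nd}\bigl(\pfis{-1}^{m(n-1)}\,\pi_{n+1}^m(q)\bigr) \;=\; \{-1\}^{m(n-1)}\cup u_{(n+1)m}^{(n+1)}(q), \]
which is the announced formula (interpreting $(-1)^{m(n-1)}$ as $\{-1\}^{m(n-1)}\in H^{m(n-1)}(K)$). The only real obstacle is spotting the opening filtration-raising identity $2=\pfis{-1}$ in $W(K)$: without it one would wrongly conclude that the restriction vanishes identically for all $n\pgq 2$.
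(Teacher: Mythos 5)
Your proof is correct and follows essentially the same route as the paper: apply $e_{nd}$ to the formula of Proposition~\ref{prop_restr_pi} (read with $2=\pfis{-1}$, i.e., Corollary~\ref{cor_restr_w}), and observe that the terms with $k>d/2$ land in $\hat I^{d(n-1)+2k}$ with $d(n-1)+2k>nd$ and therefore die under $e_{nd}$, leaving only $k=d/2$ (with $\binom{m}{m}=1$) when $d=2m$. Your explicit warning about not reducing the coefficients $2^{(d-k)(n-1)}$ modulo $2$ prematurely is a useful gloss on the same filtration argument the paper uses.
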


\begin{proof}
  This is also a consequence of the proposition, but we have to notice that
  when we apply $e_{nd}$ to the formula, the terms corresponding to $k>d/2$
  vanish. Indeed, in this case $\pfis{-1}^{(d-k)(n-1)} \pi_{n+1}^k$ sends
  $\hat{I}^{n+1}(K)$ to $\hat{I}^r(K)$ with $r=(d-k)(n-1)+k(n+1) = d(n-1) +2k > nd$.
  Thus composing with $e_{nd}$ will give zero.

  So only the term $k=d/2$ remains (and only when $d$ is even). 
\end{proof}

\begin{rem}\label{rem_restr_cohom}
  In particular, for cohomological invariants, and when $n=1$,
  we get the simple formula: $(u_{2d}^{(1)})_{|I^2}=u_{2d}^{(2)}$, which shows
  that any cohomological invariant of $I^2$ extends (not uniquely)
  to $I$. On the other hand, for $n\pgq 3$ and $d\pgq 1$, $u_{nd}^{(n)}$ never extends
  to $I^{n-1}$. This vastly generalizes the familiar facts that $e_2$
  extends to $I$, but $e_3$ does not extend to $I^2$.
\end{rem}

\begin{rem}
  Suppose $-1$ is a square in $k$, and take $n\pgq 2$.
  Then in the case of Witt invariants, $\bar{\pi}_n^d$ is independent
  of $n$, and in the case of cohomological invariants the restriction
  of any $\alpha\in M(n)$ to $I^{n+1}$ is constant.
\end{rem}

As an application of corollary \ref{cor_restr_cohom}, we may improve a result of
Kahn in \cite{Kah}: he shows in the proof of proposition 3.3 that if
$H^r(K,\mu_2)$ has symbol length at most $l\in \N$, then any element
of $H^{r(l+1)}(K,\mu_2)$ is a multiple of $(-1)\in H^1(K,\mu_2)$.
We would like to thank Karim Becher for fruitful discussions
about this application during a visit in Antwerp.

\begin{prop}
  Let $r\in \N^*$, and assume that $H^r(K,\mu_2)$ has symbol length
  at most $l\in \N$. Then for any $d> l$, we have
  \[ H^{rd}(K,\mu_2) \subset (-1)^{(r-1)\left\lceil \frac{d-l}{2} \right\rceil}\cup H^*(K,\mu_2). \]
  In particular, any element of $H^m(K,\mu_2)$ for $m\pgq r(l+1)$ is
  a multiple of $(-1)^{r-1}\in H^{r-1}(K,\mu_2)$.
\end{prop}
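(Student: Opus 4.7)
The plan is to combine the formalism developed above with Milnor's conjecture, reducing to studying the image of $u_{rd}^{(r)}$ on forms $q\in I^r(K)$ admitting a short Pfister decomposition modulo $I^{r+1}(K)$, and then exploiting the restriction formula of corollary \ref{cor_restr_cohom}.

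First I would note that every length-$rd$ symbol $\{a_1,\ldots,a_{rd}\}\in H^{rd}(K,\mu_2)$ arises as $u_{rd}^{(r)}(\phi_1+\cdots+\phi_d)$, where $\phi_i=\pfis{a_{r(i-1)+1},\ldots,a_{ri}}$, thanks to corollary \ref{cor_sum_f}. Since such symbols generate $H^{rd}(K,\mu_2)$, it suffices to prove the inclusion for $u_{rd}^{(r)}(q)$ when $q\in I^r(K)$ is arbitrary. The Milnor conjecture isomorphism $e_r:I^r(K)/I^{r+1}(K)\isom H^r(K,\mu_2)$ combined with the symbol length hypothesis lets me write $q=p+q'$, where $p$ is a sum of at most $l$ $r$-fold Pfister forms and $q'\in I^{r+1}(K)$.

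Expanding via property $(ii)$ of proposition \ref{prop_f} yields
\[ u_{rd}^{(r)}(q)=\sum_{k=0}^{d}u_{rk}^{(r)}(p)\cdot u_{r(d-k)}^{(r)}(q'). \]
Corollary \ref{cor_sum_f} kills every summand with $k>l$ (in particular $k=d$, since $d>l$), so all surviving indices satisfy $k\ppq l$ and $d-k\pgq 1$. Because $q'\in I^{r+1}(K)$, corollary \ref{cor_restr_cohom} then implies $u_{r(d-k)}^{(r)}(q')=0$ when $d-k$ is odd, while $u_{r(d-k)}^{(r)}(q')=(-1)^{m(r-1)}\cup u_{(r+1)m}^{(r+1)}(q')$ when $d-k=2m$. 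A short case-check on the parity of $d-l$ confirms that every surviving term has $m=(d-k)/2\pgq \lceil (d-l)/2\rceil$, so each non-zero summand is divisible by $(-1)^{(r-1)\lceil (d-l)/2\rceil}$, which proves the first inclusion.

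For the ``in particular'' claim, applying the main inclusion with $d=l+1$ gives $H^{r(l+1)}(K,\mu_2)\subset (-1)^{r-1}\cup H^*(K,\mu_2)$; any length-$m$ symbol with $m\pgq r(l+1)$ factors as the cup product of a length-$r(l+1)$ symbol and a length-$(m-r(l+1))$ symbol, so the containment extends to all of $H^m(K,\mu_2)$. The main obstacle is the bookkeeping around the ceiling: one must handle both parities of $d-l$ to verify that $(-1)^{(r-1)\lceil (d-l)/2\rceil}$—rather than a weaker power—can be factored uniformly out of every non-vanishing term.
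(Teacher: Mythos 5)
Your proof is correct and follows essentially the same approach as the paper's: lift a degree-$rd$ symbol to $q\in I^r(K)$ via a sum of $d$ Pfister forms, split $q = p + q'$ using the symbol-length hypothesis (with $p$ a sum of at most $l$ Pfister forms and $q'\in I^{r+1}$), expand $u_{rd}^{(r)}(q)$ by proposition \ref{prop_f}(ii), and kill/bound the terms via corollaries \ref{cor_sum_f} and \ref{cor_restr_cohom}. You have in fact supplied two details the paper leaves implicit: the parity case-check showing that every surviving $m=(d-k)/2$ is at least $\lceil(d-l)/2\rceil$, and the factoring argument for symbols of length $m\pgq r(l+1)$ not divisible by $r$ needed for the ``in particular'' claim.
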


\begin{proof}
  It is enough to prove the result for Galois symbols: let $\alpha\in H^{rd}(K,\mu_2)$
  be a symbol, and write $\alpha = \alpha_1\cup\cdots\cup \alpha_d$ with
  $\alpha_i\in H^r(K,\mu_2)$. Then we set $\phi_i\in \Pf_r(K)$ such that
  $e_r(\phi_i)=\alpha_i$, and $q=\sum_i \phi_i\in I^r(K)$. According to
  corollary \ref{cor_sum_f}, we have $\alpha = u_{rd}^{(r)}(q)$.

  Now by hypothesis, $q = q' + x$ where $q'\in I^r(K)$ can be written as a
  sum of $l$ or less $r$-fold Pfister forms, and $x\in I^{r+1}(K)$. We have
  \begin{align*}
    \alpha &= u^{(r)}_{rd}(q' + x) \\
    &= \sum_{k=0}^{rd} u^{(r)}_{rk}(q')\cup u^{(r)}_{r(d-k)}(x).
  \end{align*}
  But corollary \ref{cor_sum_f} shows that $u^{(r)}_{rk}(q')=0$
  when $k>l$, and corollary \ref{cor_restr_cohom} shows that
  $u^{(r)}_{r(d-k)}(x)$ is a multiple of $(-1)^{(r-1)\left\lceil \frac{d-k}{2}\right\rceil}$.
  In the end, $\alpha$ is a multiple of $(-1)^{(r-1)\left\lceil \frac{d-l}{2} \right\rceil}$.
\end{proof}

\section{Similitudes}

In this section we study the behaviour of invariants with respect
to similitudes.

\begin{propdef}\label{def_psi}
  There is a unique morphism of filtered $A(k)$-modules
  \[ \begin{foncdef}{\Psi}{\Inv(W,A)}{\Inv_0(W,A)[-1]}{\alpha}{\tld{\alpha}}\end{foncdef}  \]
  such that
  \begin{equation}
    \alpha(\fdiag{\lambda}q) = \alpha(q) + \{\lambda\}\tld{\alpha}(q)
  \end{equation}
  for any $\alpha\in \Inv(W,A)$, $q\in F(K)$ and $\lambda\in K^*$. 

  If $F$ be a subfunctor of $W$ such that $F(L)$ is stable under similitudes
  for any $L/k$, and $0\in F(k)$, then $\Psi$ restricts to a morphism
  $\Inv(F,A)\to \Inv_0(F,A)[-1]$. In particular, for any $n\in \N^*$
  we get a filtered morphism $M(n)\o M_0(n)[-1]$.
\end{propdef}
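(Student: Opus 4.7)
The plan is to mimic, one degree down, the construction of the shifting operator $\Phi_n^\eps$ from the previous section. Fix $\alpha\in \Inv(W,A)$ and, for each extension $L/K$ and $q\in W(L)$, observe that $\fdiag{\lambda}q$ depends on $\lambda\in L^*$ only through its square class. Hence for fixed $q\in W(K)$ the assignment
\[ \beta_q(\pfis{\lambda}) = \alpha(\fdiag{\lambda}q_L), \quad \lambda\in L^*,\ L/K, \]
is a well-defined invariant of $\Pf_1$ over $K$, i.e.\ an element of $\Inv(\Pf_1,A)$; naturality in $L$ is immediate from the naturality of $\alpha$. Lemma \ref{lem_a_inv} applied with $n=1$ then yields \emph{uniquely determined} elements $x_q,y_q\in A(K)$ with $\beta_q=x_q+y_q\cdot f_1$. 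Evaluating at $\lambda=1$, where $\pfis{1}=0$ and $\fdiag{1}q=q$, forces $x_q=\alpha(q)$; setting $\tld{\alpha}(q)=y_q$ gives the defining formula, as well as its uniqueness.

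Next I would check that $q\mapsto \tld{\alpha}(q)$ is itself a natural transformation $W\To A$ and that $\Psi$ is $A(k)$-linear; both follow formally from the uniqueness clause in Lemma \ref{lem_a_inv} applied over varying base fields. Normalization $\tld{\alpha}(0)=0$ is obtained by writing the defining formula at $q=0$: this gives $\{\lambda\}\tld{\alpha}(0)=0$ for every $\lambda$ in every extension $L/K$, and Lemma \ref{lem_a_filtr} (with $n=1$, $d=0$) then forces $\tld{\alpha}(0)=0$, so $\Psi$ does take values in $\Inv_0(W,A)$.

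For the filtration shift, suppose $\alpha\in \Inv^{\pgq d}(W,A)$. For any extension $L/K$, $q\in W(K)$, and $\lambda\in L^*$, we have
\[ \{\lambda\}\cdot \tld{\alpha}(q)_L = \alpha(\fdiag{\lambda}q_L) - \alpha(q_L) \in A^{\pgq d}(L). \]
Lemma \ref{lem_a_filtr} (again with $n=1$) now gives $\tld{\alpha}(q)\in A^{\pgq d-1}(K)$, which is exactly the statement that $\Psi$ lands in $\Inv_0(W,A)[-1]$ as a morphism of filtered $A(k)$-modules.

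Finally, for the restriction to a subfunctor $F\subset W$ stable under similitudes with $0\in F(k)$, the same construction applies verbatim: since $\fdiag{\lambda}q\in F(L)$ whenever $q\in F(L)$, the auxiliary invariant $\beta_q$ still makes sense for $q\in F(K)$, and the resulting $\tld{\alpha}$ is a normalized invariant of $F$ (using $0\in F(k)$ for the normalization argument). The case $F=I^n$ is an instance, yielding the promised filtered morphism $M(n)\To M_0(n)[-1]$. The only substantive point in the whole argument is the verification that $\beta_q$ is genuinely a natural transformation of $\Pf_1$, so that Lemma \ref{lem_a_inv} may be invoked; everything else is a routine unwinding of the uniqueness and filtration statements of that lemma and of Lemma \ref{lem_a_filtr}.
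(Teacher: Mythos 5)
Your proposal is correct and follows essentially the same route as the paper: define $\beta_q(\lambda)=\alpha(\fdiag{\lambda}q)$ as an invariant of $\Pf_1$ over $K$, apply Lemma \ref{lem_a_inv} to get the decomposition $\beta_q=x_q+y_q\cdot f_1$, identify $x_q=\alpha(q)$ at $\lambda=1$, and read off naturality, linearity, and the filtration shift from the uniqueness in Lemma \ref{lem_a_inv} together with Lemma \ref{lem_a_filtr}. The only micro-difference is in the normalization step: the paper observes that $\fdiag{\lambda}\cdot 0=0$ exhibits $y_0=0$ as a valid choice and concludes by uniqueness in Lemma \ref{lem_a_inv}, whereas you derive $\{\lambda\}\tld{\alpha}(0)=0$ for all $\lambda$ and invoke Lemma \ref{lem_a_filtr}; the two are interchangeable.
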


\begin{proof}
  Let $\alpha\in \Inv(F,A^{\pgq d})$ for some $d\in \N$ and $q\in F(K)$.
  For any $\lambda\in L^*$, where $L/K$ is any field extension, we set
  $\beta_q(\lambda) = \alpha(\fdiag{\lambda} q)$.

  Then $\beta_q$ is an invariant over $K$ of square classes, with values in $A$.
  Now the functor of square classes is isomorphic to $\Pf_1$, so we may apply
  lemma \ref{lem_a_inv}: there are uniquely determined $x_q,y_q\in A(K)$
  such that $\beta_q(\lambda) = x_q + \{\lambda\}\cdot y_q$
  for all $\lambda$.
  Taking $\lambda = 1$ we see that $x_q = \alpha(q)$, and we set $\tld{\alpha}(q) = y_q$.

  The uniqueness of $y_q$ allows to see that $\tld{\alpha}\in \Inv(F,A)$. The
  fact that $\{\lambda\}\cdot y_q\in A^{\pgq d}(L)$ for all $\lambda\in L^*$
  shows according to lemma \ref{lem_a_filtr} that $y_q\in A^{\pgq d-1}(K)$,
  so as a filtered morphism $\Psi$ has degree $-1$. Finally, it is
  clear that if $q=0$, then $\alpha(\fdiag{\lambda}q) = \alpha(q) + \{\lambda\}\cdot 0$,
  so $\tld{\alpha}(0)=0$, which means that $\tld{\alpha}$ is normalized.
\end{proof}

We first establish some basic properties of $\Psi$:

\begin{prop}
  Let $\alpha,\beta\in \Inv(W,A)$. Then
  \[ \Psi(\alpha\beta) = \Psi(\alpha)\beta + \alpha\Psi(\beta) + \{-1\}\Psi(\alpha)\Psi(\beta). \]
\end{prop}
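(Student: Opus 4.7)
The plan is to imitate Proposition \ref{prop_prod_phi} and compute $(\alpha\beta)(\fdiag{\lambda}q)$ directly by applying the defining identity of $\Psi$ to each factor. For arbitrary $q\in W(K)$ and $\lambda\in K^*$, this gives
\[ (\alpha\beta)(\fdiag{\lambda}q) = \bigl(\alpha(q)+\{\lambda\}\tld{\alpha}(q)\bigr)\bigl(\beta(q)+\{\lambda\}\tld{\beta}(q)\bigr), \]
which expands into a constant term $(\alpha\beta)(q)$, a linear term in $\{\lambda\}$ involving $\tld{\alpha}(q)\beta(q)+\alpha(q)\tld{\beta}(q)$, and a term with a $\{\lambda\}^2$ factor multiplying $\tld{\alpha}(q)\tld{\beta}(q)$.

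The only non-trivial step is to reduce this $\{\lambda\}^2$ factor. Using equation (\ref{eq_prod_fn}) together with the identity $\pfis{\lambda,\lambda}=\pfis{-1,\lambda}$ recalled in the preliminaries, we obtain
\[ \{\lambda\}^2 = f_1(\pfis{\lambda})\cdot f_1(\pfis{\lambda}) = f_2(\pfis{\lambda,\lambda}) = f_2(\pfis{-1,\lambda}) = \{-1\}\{\lambda\}. \]
After this substitution the expansion collapses to the form $(\alpha\beta)(q) + \{\lambda\}\cdot R(q)$, where
\[ R(q) = \tld{\alpha}(q)\beta(q) + \alpha(q)\tld{\beta}(q) + \{-1\}\tld{\alpha}(q)\tld{\beta}(q). \]

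To conclude, I compare with the defining relation $(\alpha\beta)(\fdiag{\lambda}q) = (\alpha\beta)(q) + \{\lambda\}\,\widetilde{\alpha\beta}(q)$ and invoke the uniqueness established in the proof of Proposition-definition \ref{def_psi}: applying lemma \ref{lem_a_inv} to the invariant $\lambda\mapsto (\alpha\beta)(\fdiag{\lambda}q)$ of $\Pf_1$ forces $\widetilde{\alpha\beta}(q)=R(q)$, which is exactly the claimed Leibniz-type identity. I do not anticipate any real obstacle: the whole argument is a pointwise computation, and the only subtle point—absorbing the square of the symbol $\{\lambda\}$—is immediate from the formal properties of $A$ listed in the introduction and exactly accounts for the correction term $\{-1\}\Psi(\alpha)\Psi(\beta)$.
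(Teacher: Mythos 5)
Your proof is correct and takes essentially the same approach as the paper: expand $(\alpha\beta)(\fdiag{\lambda}q)$ via the defining identity of $\Psi$, absorb the $\{\lambda\}^2$ term using $\{\lambda\}^2=\{-1\}\{\lambda\}$, and read off $\widetilde{\alpha\beta}$ by uniqueness. The paper simply presents this more tersely, leaving the square-absorption and the appeal to uniqueness implicit.
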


\begin{proof}
  Let $q\in W(K)$ and $\lambda\in K^*$. Then:
  \begin{align*}
    (\alpha\beta)(\fdiag{\lambda}q)
      &= (\alpha(q) + \{\lambda\}\tld{\alpha}(q))(\beta(q) + \{\lambda\}\tld{\beta}(q)) \\
      &= (\alpha\beta)(q) + \{\lambda\}\left( (\tld{\alpha}\beta)(q) + (\alpha\tld{\beta})(q)
             + \{-1\}(\tld{\alpha}\tld{\beta})(q) \right). \qedhere
  \end{align*}
\end{proof}

\begin{prop}
  We have $\Psi^2 = -\delta(A) \Psi$.
\end{prop}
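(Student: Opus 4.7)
The plan is to compute $\alpha(\fdiag{\lambda\mu}q)$ in two different ways using the defining property of $\Psi$, and then compare. The key observation is that $\fdiag{\lambda\mu}q = \fdiag{\lambda}(\fdiag{\mu}q)$, so we can either apply the formula once with the scalar $\lambda\mu$, or apply it twice with $\mu$ then $\lambda$.

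First I would expand directly: by definition,
\[ \alpha(\fdiag{\lambda\mu}q) = \alpha(q) + \{\lambda\mu\}\tld{\alpha}(q), \]
and by formula (\ref{eq_delta_sum}), $\{\lambda\mu\} = \{\lambda\} + \{\mu\} - \delta\{\lambda,\mu\}$.

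On the other hand, applying the definition of $\Psi$ twice gives
\begin{align*}
  \alpha(\fdiag{\lambda}(\fdiag{\mu}q)) &= \alpha(\fdiag{\mu}q) + \{\lambda\}\tld{\alpha}(\fdiag{\mu}q) \\
  &= \alpha(q) + \{\mu\}\tld{\alpha}(q) + \{\lambda\}\tld{\alpha}(q) + \{\lambda\}\{\mu\}\Psi^2(\alpha)(q).
\end{align*}
Equating the two expressions and cancelling common terms yields
\[ \{\lambda\}\{\mu\}\,\Psi^2(\alpha)(q) = -\delta\{\lambda,\mu\}\,\tld{\alpha}(q). \]

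Next, by formula (\ref{eq_prod_fn}), $\{\lambda\}\{\mu\} = f_1(\pfis{\lambda})\cdot f_1(\pfis{\mu}) = f_2(\pfis{\lambda,\mu}) = \{\lambda,\mu\}$, so the relation becomes
\[ \{\lambda,\mu\}\bigl(\Psi^2(\alpha)(q) + \delta\tld{\alpha}(q)\bigr) = 0. \]

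Finally, since this holds for any $\lambda,\mu\in L^*$ over any extension $L/K$, and $\pfis{\lambda,\mu}$ runs over all $2$-fold Pfister forms, lemma \ref{lem_a_filtr} (in its ``vanishing'' form with $n=2$) forces $\Psi^2(\alpha)(q) = -\delta\tld{\alpha}(q)$, which is exactly $\Psi^2 = -\delta(A)\Psi$. There is no real obstacle here; the only subtlety is to remember that $\fdiag{\lambda\mu}$ means pointwise scalar multiplication (so it composes as $\fdiag{\lambda}\circ\fdiag{\mu}$) and to correctly apply the symbol-product identity to identify $\{\lambda\}\{\mu\}$ with $\{\lambda,\mu\}$ before invoking lemma \ref{lem_a_filtr}.
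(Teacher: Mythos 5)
Your proof is correct and takes essentially the same route as the paper: compute $\alpha(\fdiag{\lambda\mu}q)$ once directly and once by two successive applications of $\Psi$, compare using (\ref{eq_delta_sum}), and invoke lemma \ref{lem_a_filtr} with generic $\lambda,\mu$ to cancel $\{\lambda,\mu\}$. The only cosmetic difference is that you write $\{\lambda\}\{\mu\}$ and then identify it with $\{\lambda,\mu\}$ via (\ref{eq_prod_fn}), whereas the paper writes $\{\lambda,\mu\}$ from the start.
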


\begin{proof}
  For any extension $L/K$ and any $\lambda,\mu\in L^*$ :
  \begin{align*}
    \alpha(\fdiag{\lambda\mu}q) &= \alpha(\fdiag{\lambda}q)
                                  + \{\mu\} \tld{\alpha}(\fdiag{\lambda}q) \\
    &= \alpha(q) + \{\lambda\} \tld{\alpha}(q) + \{\mu\} \tld{\alpha}(q)
      + \{\lambda,\mu\} \tld{\tld{\alpha}}(q) \\
    &= \alpha(q) + \{\lambda\mu\} \tld{\alpha}(q) + \{\lambda,\mu\}\left(\delta \tld{\alpha}(q) + \tld{\tld{\alpha}}(q) \right)
  \end{align*}
  using formula (\ref{eq_delta_sum}) for the last equality.
  We also have
  \[ \alpha(\fdiag{\lambda\mu}q) = \alpha(q) + \{\lambda\mu\} \tld{\alpha}(q), \]
  so $\{\lambda,\mu\}\left(\delta \tld{\alpha}(q) + \tld{\tld{\alpha}}(q) \right)=0$.
  Since this holds for any $\lambda$, $\mu$ over any extension, we may conclude
  that $\tld{\tld{\alpha}}(q)= -\delta \tld{\alpha}(q)$.
\end{proof}

\begin{rem}
  By definition, $\tld{\alpha}=0$ iff $\alpha(\fdiag{\lambda}q)=\alpha(q)$,
  that is to say $\alpha$ is \emph{invariant under similitudes}. But the previous
  proposition suggests that in the case $A=W$, $\tld{\alpha}=-\alpha$
  should also be an interesting property (notably, it is always satisfied
  by invariants of the form $\tld{\beta}$). And indeed, it is easily seen
  to be equivalent to $\alpha(\fdiag{\lambda}q)=\fdiag{\lambda}\alpha(q)$,
  in which case we say $\alpha$ is \emph{compatible with similitudes}. Then the proposition
  shows that any $\alpha$ may be uniquely decomposed as a sum $\alpha = \beta + \gamma$
  with $\beta$ compatible with similitudes, and $\gamma$ invariant under similitudes.
  Precisely: $\beta = -\tld{\alpha}$ and $\gamma = \alpha + \tld{\alpha}$.

  From a less intrinsic point of view, if $\alpha$ is a finite combination
  of the $f_n^d$, then by definition of the $f_n^d$ it can be seen as a
  composition
  \[ I^n(K) \Isom \hat{I}^n(K) \subset GW(K) \xrightarrow{h} GW(K) \To W(K) \]
  where $h$ is a combination of the $\lambda^i$. Then $\beta$ corresponds to
  selecting only the odd $i$, while $\gamma$ corresponds to the even terms.
  Thus it makes sense to call $\beta$ the \emph{odd part} of $\alpha$, and
  $\gamma$ its \emph{even part}. This decomposition has no clear equivalent
  for cohomological invariants.
\end{rem}

We now want to describe the action of $\Psi$ on our basic invariants. It
turns out that it is much easier to deal with the $g_n^d$ than the $f_n^d$ in
this situation.

\begin{prop}\label{prop_simil}
  Let $n,d\in \N^*$. Then
  \[ \tld{g_n^d} = \left\{ \begin{array}{lc}
                             -\delta(A) g_n^d & \text{if $d$ odd} \\
                             \{-1\}^{n-1}g_n^{d-1} & \text{if $d$ even.}
                           \end{array} \right. \]
\end{prop}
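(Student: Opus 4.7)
My plan is to proceed by induction on $d$. Both sides of the claimed formula vanish at $0$ (for $\tld{g_n^d}$, evaluating the similitude relation at $r = 0$ gives $\{\lambda\}\tld{g_n^d}(0) = 0$ for all $\lambda$, and lemma \ref{lem_a_filtr} concludes), so by proposition \ref{prop_ker_phi} it suffices to match their images under $\Phi_n^-$. Using proposition \ref{prop_g_pm}, the target value of $(\tld{g_n^d})^-$ is $-\delta g_n^{d-1}$ when $d$ is odd and $\{-1\}^{n-1}g_n^{d-2}$ when $d$ is even.

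The base case $d = 1$ follows from $g_n^1 = f_n$ (proposition \ref{prop_f_g}) together with a direct computation: expanding $\fdiag{\lambda}\pfis{a_1,\dots,a_n} = \pfis{\lambda a_1, a_2,\dots,a_n} - \pfis{\lambda, a_2,\dots, a_n}$, applying $f_n$, and using formula (\ref{eq_delta_sum}) one obtains $\tld{f_n}(\phi) = -\delta f_n(\phi)$ on each $\phi \in \Pf_n$; since $f_n$ and hence $\tld{f_n}$ are additive, this extends to all of $I^n$.

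For the inductive step at $d \geq 2$, I compute $(\tld{g_n^d})^-$ by expanding $g_n^d(\fdiag{\lambda}(q - \phi))$ in two ways for a generic $\phi = \pfis{a_1,\dots,a_n}$. First, apply $\Phi_n^-$ to peel off $\phi$ and then the similitude formula to $\tld{g_n^d}$. Second, write $\fdiag{\lambda}(q - \phi) = \fdiag{\lambda}q - \psi_1 + \psi_2$ with $\psi_1 = \pfis{\lambda a_1, a_2, \dots, a_n}$ and $\psi_2 = \pfis{\lambda, a_2, \dots, a_n}$, expand using the explicit formulas for $(g_n^d)^\pm$ and $(g_n^d)^{+-}$ from proposition \ref{prop_g_pm}, apply similitude to each of the four resulting terms, and substitute the inductive hypothesis for $\tld{g_n^{d-1}}$ and $\tld{g_n^{d-2}}$. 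Equating the two expansions, all terms turn out to carry a factor of $\{\lambda\}$, so lemma \ref{lem_a_filtr} allows us to simplify to an equation of the form $-f_n(\phi)(\tld{g_n^d})^-(q) = -f_n(\phi)\cdot (\mathrm{target})(q) + R(\phi, q)$, where $R$ is a combination of terms involving $\xi := f_{n-1}(\pfis{a_2, \dots, a_n})$, $f_n(\phi)$ and powers of $\{-1\}$, multiplying $g_n^k(q)$ for $k \leq d - 2$.

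The main technical obstacle is showing $R = 0$. This rests on three identities that hold uniformly in $A = W$ and $A = H$: the square relation $\xi^2 = \{-1\}^{n-1}\xi$ (from iterating $\pfis{a}^2 = 2\pfis{a}$, equivalently $\{a,a\} = \{-1,a\}$, over the entries of $\xi$); its consequence $f_n(\phi)\xi = \{-1\}^{n-1}f_n(\phi)$ (since $f_n(\phi) = \{a_1\}\xi$); and the Steinberg vanishing $\{\lambda,-\lambda\} = 0$, which annihilates a term of shape $\{-1\}^{n-1}f_n(\psi_2)\{-\lambda\}$ arising after applying formula (\ref{eq_delta_sum}) to rewrite $\{\lambda a_1\}$. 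After these substitutions, the $g_n^{d-2}$ coefficient collapses to the desired $\{-1\}^{n-1}f_n(\phi)$ (the bookkeeping between the cases $A = W$ and $A = H$ is unified by $\{-1\} = 2$ in $W$), and the $g_n^{d-3}$ coefficient—present only when $d$ is even—vanishes because $f_n(\phi)(\{-1\}^{n-1} - \xi) = 0$. A final application of lemma \ref{lem_a_filtr} to $f_n(\phi)[(\tld{g_n^d})^- - \mathrm{target}](q) = 0$ for all $\phi \in \Pf_n$ concludes.
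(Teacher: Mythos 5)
Your strategy is in all essentials the same as the paper's: induct on $d$, reduce to comparing images under a shift operator (you use $\Phi_n^-$ where the paper uses $\Phi_n^+$, an immaterial variant since both have constant kernel by proposition \ref{prop_ker_phi}), expand $g_n^d(\fdiag{\lambda}(q \mp \phi))$ two ways via the decomposition $\fdiag{\lambda}\pfis{a_1,\dots,a_n} = \pfis{\lambda a_1,a_2,\dots,a_n} - \pfis{\lambda,a_2,\dots,a_n}$, and compare residues of the eight-term sums for generic $\lambda,\phi$. The identities you invoke to kill the remainder $R$ (the square relations for $\xi$, the Steinberg vanishing, and $\delta\{-1\}=2$) are exactly the content of equations (\ref{eq_delta_sum}) and (\ref{eq_delta_2}) used in the paper's derivation of its recurrence (\ref{eq_rec_simil}), so the bookkeeping you left unfinished does close in the way you expect.
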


\begin{proof}
  We prove the proposition by induction on $d$. If $d=1$, the statement means that
  \[ f_n(\fdiag{\lambda}q) = f_n(q) -\delta \{\lambda\} f_n(q), \]
  which is true whether $A=W$ or $A=H$.

  Now suppose the proposition holds until $d-1$, for some $d\pgq 2$.
  Since $\tld{g_n^d}$ is normalized, it is enough to compute $\tld{g_n^d}^+$.
  Let $L/K$ be any extension, and take $q\in I^n(K)$, $\phi\in \Pf_n(L)$
  and $\lambda\in L^*$. Then:
  \begin{align*}
    g_n^d(\fdiag{\lambda}(q+\phi)) &= g_n^d(q+\phi) + \{\lambda\}\tld{g_n^d}(q+\phi) \\
                                   &= g_n^d(q) + f_n(\phi)(g_n^d)^+(q) + \{\lambda\} \tld{g_n^d}(q) +
                                     \{\lambda\}f_n(\phi) \tld{g_n^d}^+(q)
  \end{align*}
  so if we consider generic $\lambda$ and $\phi$ and take residues, we
  find exactly $\tld{g_n^d}^+(q)$.
  
  On the other hand, if we write $\phi = \pfis{a}\psi$, we can compute
  \[ g_n^d(\fdiag{\lambda}(q+\phi)) = g_n^d(\fdiag{\lambda}q + \pfis{\lambda a}\psi - \pfis{\lambda}\psi) \]
  using succesively on each term $\Phi^+$ relative to $\pfis{\lambda a}$,
  $\Phi^-$ relative to $\pfis{\lambda}$, and $\Psi$ relative to $\fdiag{\lambda}$,
  to get a 8-term sum. Again considering generic $\lambda$, $a$ and $\psi$,
  taking residues, and comparing to the previous computation, we find:
  \begin{equation}\label{eq_rec_simil}
    \tld{g_n^d}^+ = -\delta (g_n^d)^+  - \tld{(g_n^d)^+} + \{-1\}^{n-1}(g_n^d)^{+-} + \{-1\}^n\tld{(g_n^d)^{+-}},
  \end{equation}
  using several times equations (\ref{eq_delta_sum}) and (\ref{eq_delta_2}).
  
  If $d$ is even, then $(g_n^d)^+=g_n^{d-1}$ and $(g_n^d)^{+-}=g_n^{d-2}$,
  so by induction $\tld{(g_n^d)^+} = -\delta g_n^{d-1}$ and
  $\tld{(g_n^d)^{+-}} = \{-1\}^{n-1}g_n^{d-3}$.
  Thus from equation (\ref{eq_rec_simil}) we get:
  \begin{align*}
    \tld{g_n^d}^+ &= -\delta g_n^{d-1} + \delta g_n^{d-1} + \{-1\}^{n-1}(g_n^{d-2}+\{-1\}^ng_n^{d-3}) \\
                  &= \{-1\}^{n-1}(g_n^{d-1})^+
  \end{align*}
  which is the expected formula (we need to be a little careful with the
  case $d=2$, but we can check that the reasoning still holds if we say
  that $g_n^{-1}=0$).

  Similarly, if $d$ is odd, $(g_n^d)^+=g_n^{d-1} + \{-1\}^ng_n^{d-2}$
  and $(g_n^d)^{+-}=g_n^{d-2}$, so
  $\tld{(g_n^d)^+} = \{-1\}^{n-1}g_n^{d-2} - \delta\{-1\}^ng_n^{d-2} = -\{-1\}^{n-1}g_n^{d-2}$,
  and $\tld{(g_n^d)^{+-}} = -\delta g_n^{d-2}$. Then from (\ref{eq_rec_simil}):
  \begin{align*}
    \tld{g_n^d}^+ &= -\delta (g_n^d)^+ + \{-1\}^{n-1} g_n^{d-2}  + \{-1\}^{n-1}g_n^{d-2} -\delta \{-1\}^n g_n^{d-2} \\
                  &= -\delta (g_n^d)^+
  \end{align*}
 using (\ref{eq_delta_2}), which also allows to conclude.
\end{proof}

\begin{coro}\label{cor_inv_sim}
  The module $\Inv(I^n/\sim, A)$ of invariants of similarity classes
  of elements in $I^n$ is given by the combinations $\sum_{d\in \N} a_d g_n^d$
  with $\{-1\}^{n-1}a_{2i+2} = \delta(A)a_{2i+1}$ for all $i\in \N$.
\end{coro}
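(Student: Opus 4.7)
The plan is to recast the condition of being an invariant of similarity classes as the vanishing of $\tld{\alpha}$, then read off the constraint on coefficients in the $g_n^d$-expansion. First I would note that $\alpha$ descends to $I^n/\!\sim$ iff $\alpha(\fdiag{\lambda}q) = \alpha(q)$ for all extensions $L/K$, all $q \in I^n(K)$ and all $\lambda \in L^*$. By the defining property of $\Psi$ from Proposition-definition \ref{def_psi}, this is equivalent to $\{\lambda\}\cdot \tld{\alpha}(q) = 0$ for all such $\lambda$ and $q$. Since $\{\lambda\} = f_1(\pfis{\lambda})$, Lemma \ref{lem_a_filtr} (applied with $n=1$) shows this collapses to the condition $\tld{\alpha} = 0$ in $M(n)$.

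Next I would compute $\tld{\alpha}$ from the expansion $\alpha = \sum_{d\in \N} a_d g_n^d$ provided by Theorem \ref{thm_g}. The key point is that $\Psi$ may be applied termwise: for any fixed $q\in I^n(K)$, Proposition \ref{prop_g_borne} says only finitely many $g_n^d(q)$ are non-zero, and Proposition \ref{prop_simil} shows each $\tld{g_n^d}$ is a scalar multiple of either $g_n^d$ or $g_n^{d-1}$, so the sum $\sum_d a_d \tld{g_n^d}(q)$ is also finite. Using the defining relation pointwise one directly checks that $\tld{\alpha}(q) = \sum_d a_d \tld{g_n^d}(q)$. Substituting the formulas of Proposition \ref{prop_simil} and regrouping by parity gives
\[ \tld{\alpha} = \sum_{i\in \N} \bigl( \{-1\}^{n-1} a_{2i+2} - \delta(A)\, a_{2i+1}\bigr) \, g_n^{2i+1}. \]

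Finally, I would invoke the uniqueness half of Theorem \ref{thm_g} applied to this expansion of $\tld{\alpha}$: the vanishing $\tld{\alpha}=0$ is equivalent to the vanishing of each coefficient, that is, $\{-1\}^{n-1} a_{2i+2} = \delta(A)\, a_{2i+1}$ for every $i \in \N$, which is exactly the stated description. There is no real obstacle beyond the termwise application of $\Psi$ to an infinite combination, and that issue is handled cleanly by the finiteness property of Proposition \ref{prop_g_borne}; everything else is a direct consequence of results already established in the paper.
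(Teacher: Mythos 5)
Your proof is correct and follows essentially the same route as the paper's: identify $\Inv(I^n/\!\sim,A)$ with $\Ker(\Psi)$, compute $\tld{\alpha}$ termwise on the $g_n^d$-expansion via Proposition \ref{prop_simil}, and conclude by uniqueness from Theorem \ref{thm_g}. The only difference is that you spell out the two points the paper leaves implicit (the reduction $\tld{\alpha}=0$ via Lemma \ref{lem_a_filtr}, and the justification of termwise application of $\Psi$ via Proposition \ref{prop_g_borne}), both handled correctly.
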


\begin{proof}
  The module $\Inv(I^n/\sim, A)$ is naturally isomorphic to the kernel
  of $\Psi$, and if $\alpha = \sum_{d\in \N}a_dg_n^d$, we get
  \[ \tld{\alpha} = \sum_{i\in \N} (\{-1\}^{n-1}a_{2i+2}-\delta(A)a_{2i+1}) g_n^{2i+1}, \]
  which gives the result.
\end{proof}

The formula for $\tld{f_n^d}$ is not particularly enlightening (see remark
\ref{rem_f_simil}), but we may at least give the values of $f_n^d$ on general
Pfister forms (which amounts to computing the values of $\tld{f_n^d}$ on Pfister
forms). This may be deduced from the previous proposition using \ref{prop_f_g},
but we can give a direct proof.

\begin{prop}\label{prop_f_gen_pfis}
  Let $n\in \N^*$ and $d\pgq 2$. Then for any $\phi \in \Pf_n(K)$
  and $\lambda\in K^*$ we have
  \[ f_n^d(\fdiag{\lambda}\phi) = (-1)^d \{-1\}^{n(d-1)-1}\{\lambda\}f_n(\phi). \]
\end{prop}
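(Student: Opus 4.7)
The plan is to compute $f_n^d(\fdiag{\lambda}\phi)$ directly by writing $\fdiag{\lambda}\phi$ as a difference of two $n$-fold Pfister forms in $W(K)$, and then invoking the additivity (ii) and vanishing (iii) properties of Proposition \ref{prop_f} together with formula (\ref{eq_fn_moins_phi}).

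First I would write $\phi = \pfis{a_1, \ldots, a_n}$, and combine the identity $\pfis{\lambda, a_1} = \pfis{\lambda} + \pfis{a_1} - \pfis{\lambda a_1}$ in $W(K)$ (multiplied by $\pfis{a_2, \ldots, a_n}$) with the relation $\fdiag{\lambda} = 1 - \pfis{\lambda}$ in $W(K)$, to obtain
\[ \fdiag{\lambda}\phi = \chi_2 - \chi_1, \qquad \chi_1 := \pfis{\lambda, a_2, \ldots, a_n}, \quad \chi_2 := \pfis{\lambda a_1, a_2, \ldots, a_n}, \]
both elements of $\Pf_n(K)$. Applying property (ii) of \ref{prop_f} to this decomposition, and using (iii) to kill $f_n^k(\chi_2)$ for $k \geq 2$, only the $k = 0, 1$ terms survive, giving
\[ f_n^d(\fdiag{\lambda}\phi) = f_n^d(-\chi_1) + f_n(\chi_2)\, f_n^{d-1}(-\chi_1). \]
Formula (\ref{eq_fn_moins_phi}) then evaluates both $f_n^d(-\chi_1)$ and $f_n^{d-1}(-\chi_1)$ in terms of $f_n(\chi_1)$ and powers of $\{-1\}$.

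It remains to compute $f_n(\chi_1)f_n(\chi_2)$. For this I would use (\ref{eq_delta_sum}) to expand $\{\lambda a_1\} = \{\lambda\} + \{a_1\} - \delta\{\lambda, a_1\}$, the squaring identity $\{b_1, \ldots, b_n\}^2 = \{-1\}^n \{b_1, \ldots, b_n\}$, and the uniform consequence $(1 - \delta\{-1\})z = -z$ of (\ref{eq_delta_2}), to obtain
\[ f_n(\chi_1) f_n(\chi_2) = \{-1\}^n f_n(\chi_1) - \{-1\}^{n-1}\{\lambda\}\, f_n(\phi). \]
Substituting back, the two contributions proportional to $f_n(\chi_1)$ cancel thanks to $(-1)^d + (-1)^{d-1} = 0$, and only the cross term survives, giving exactly $(-1)^d\{-1\}^{n(d-1)-1}\{\lambda\}f_n(\phi)$. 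The main obstacle is to carefully track the various powers of $\{-1\}$ and the signs through this simplification while treating the $A = W$ and $A = H$ cases uniformly; the relation $\delta\{-1\} = 2$ from (\ref{eq_delta_2}) unifies them, realizing as $\{-1\} = 2$ in $W(k)$ and as $2 = 0$ in $H^*(k, \mu_2)$.
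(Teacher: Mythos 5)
Your proof is correct and follows essentially the same route as the paper: the decomposition $\fdiag{\lambda}\phi = \pfis{\lambda a_1,a_2,\ldots,a_n} - \pfis{\lambda,a_2,\ldots,a_n}$ is exactly the paper's $\pfis{\lambda x}\psi - \pfis{\lambda}\psi$, and the subsequent use of properties (ii)–(iii) of Proposition \ref{prop_f}, formula (\ref{eq_fn_moins_phi}), and the simplification via (\ref{eq_delta_sum}) and (\ref{eq_delta_2}) is the same computation. The only cosmetic difference is that you carry out the simplification by computing the full product $f_n(\chi_1)f_n(\chi_2)$, whereas the paper first factors out $f_{n-1}(\psi)$ and simplifies $\{-1\}\{\lambda\}-\{\lambda x\}\{\lambda\}=\{\lambda,x\}$.
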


\begin{proof}
  Write $\phi = \pfis{x}\psi$. Then since
  $\fdiag{\lambda}\pfis{x} = \pfis{\lambda x} - \pfis{\lambda}$,
  and using formula (\ref{eq_fn_moins_phi}), we get :
  \begin{align*}
    f_n^d(\fdiag{\lambda}\phi) =& f_n^d(\pfis{\lambda x}\psi - \pfis{\lambda}\psi) \\
       =& f_n^d(-\pfis{\lambda}\psi) + \{\lambda x\} f_n^{d-1}(-\pfis{\lambda}\psi) \\
       =& (-1)^d \{-1\}^{n(d-1)} \{\lambda\}f_{n-1}(\psi) \\
        &+ \{\lambda x\}f_{n-1}(\psi) (-1)^{n(d-1)}\{-1\}^{n(d-2)}\{\lambda\}f_{n-1}(\psi) \\
       =& (-1)^d\{-1\}^{n(d-1)-1}\left( \{-1\} \{\lambda\} - \{\lambda x\}\{\lambda\}\right) f_{n-1}(\psi) \\
       =& (-1)^d\{-1\}^{n(d-1)-1}\{\lambda\}\{x\}f_{n-1}(\psi). \qedhere
  \end{align*}
\end{proof}

\begin{rem}\label{rem_f_simil}
  We can give the general formula for $\tld{f_n^d}$ for the record,
  though we will not prove it:
  \[ \tld{f_n^d} = (-1)^d\sum_{k=1}^{d-1}\binom{d-1}{k-1} \{-1\}^{n(d-k)-1} f_n^k
         + \left\{ \begin{array}{cl} 0 & \text{if $d$ even} \\
                     -\delta(A) f_n^d & \text{if $d$ odd.}\end{array} \right. \]
  We can check that if we evaluate this on a Pfister form we retrieve
  proposition \ref{prop_f_gen_pfis}, and as an even more special case
  formula (\ref{eq_fn_moins_phi}).
\end{rem}

\section{Ramification of invariants}

In this short section we establish the behaviour of invariants with
respect to residues of discrete valuations (which incidentally was one
of the main initial motivations of this article). Let thus $(K,v)$
be a valued field, where $v$ is a rank 1 discrete $k$-valuation,
with valuation ring $\mathcal{O}_K$ and residue field $\kappa$
(in particular, $\kappa$ is an extension of $k$, so it has characteristic
not 2).

Recall from \cite[19.10]{EKM} the so-called second residue map
$\partial_\pi: W(K)\to W(\kappa)$, which depends on the choice of a
uniformizing element $\pi\in K$. We say that $q\in W(K)$ is
\emph{unramified} if $\partial_\pi(q)=0$, which is independent
of the choice of $\pi$. Then $q$ is unramified iff it has a
diagonalization $\fdiag{a_1,\dots,a_r}$ with $a_i\in \mathcal{O}_K^*$.

Recall also from \cite[7.9]{GMS} the canonical residue map
$\partial: H^d(K,\mu_2)\to H^{d-1}(\kappa,\mu_2)$, which extends
to $\partial: H^*(K,\mu_2)\to H^*(\kappa,\mu_2)$.
We say that $x\in H^*(K,\mu_2)$ is unramified if $\partial(x)=0$.

Furthermore, from \cite[19.14]{EKM}, we have
$\partial_\pi(I^d(K))\subset I^{d-1}(\kappa)$, and using for instance
\cite[101.8]{EKM} we get for any $d\in \N^*$ a commutative diagram
\[ \begin{tikzcd}
  I^d(K) \rar{\partial_\pi} \dar{e_n} & I^{d-1}(\kappa) \dar{e_{d-1}} \\
  H^d(K,\mu_2) \rar{\partial} & H^{d-1}(\kappa,\mu_2).
\end{tikzcd} \]

\begin{prop}\label{prop_ram}
  Let $n\in \N^*$ and $q\in I^n(K)$, where $K$ is endowed with a
  rank 1 discrete $k$-valuation. If $q$ is unramified,
  then $\alpha(q)\in A(K)$ is unramified for any $\alpha\in M(n)$.
\end{prop}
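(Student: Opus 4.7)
The plan is to reduce the statement to the basic invariants via the classification theorem. By Theorem \ref{thm_g} I write $\alpha = \sum_{d\in \N} a_d g_n^d$ with $a_d\in A(k)$, and by Proposition \ref{prop_g_borne} only finitely many $g_n^d(q)$ are nonzero for fixed $q$. The constants $a_d\in A(k)$ map to unramified elements of $A(K)$ (they are sums of diagonal forms, respectively Galois symbols, with entries in $k^*\subset \mathcal{O}_K^*$, on which the residue vanishes), and the set of unramified elements is a subring of $A(K)$. Combining this with the first formula of Proposition \ref{prop_f_g}, which expresses each $g_n^d$ as a finite $A(k)$-linear combination of the $f_n^k$, I reduce to showing that $f_n^k(q)$ is unramified for every $k\in \N$.

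For this, I invoke the classical structural fact that an unramified element $q\in I^n(K)$ admits a decomposition $q = \sum_{i=1}^r \phi_i$ where each $\phi_i = \pfis{u_1^{(i)},\dots,u_n^{(i)}}$ has all entries $u_j^{(i)}\in \mathcal{O}_K^*$: the unramified part of $I^n(K)$ is the image of $I^n(\mathcal{O}_K)$, which is additively generated by unit $n$-fold Pfister forms (a standard consequence of Springer's theorem and the residue computations of \cite[\S 19]{EKM}). With such a decomposition in hand, Corollary \ref{cor_sum_f} yields
\[ f_n^k(q) = \sum_{1\ppq i_1<\cdots<i_k\ppq r} f_n(\phi_{i_1})\cdots f_n(\phi_{i_k}), \]
and each factor $f_n(\phi_{i_j}) = \{u_1^{(i_j)},\dots,u_n^{(i_j)}\}$ is unramified, since residues vanish on Pfister forms or Galois symbols whose entries are all units. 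Thus $f_n^k(q)$ is a finite sum of products of unramified elements, hence unramified, which concludes the proof.

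The main obstacle is the structural input of the second step, namely that an unramified element of $I^n(K)$ decomposes as a sum of unit $n$-fold Pfister forms: this is what allows the abstract algebra of our invariants (formal combinations of the $f_n^d$ or $g_n^d$) to interact with the geometric condition of being unramified. Once this input is granted, the rest of the argument is essentially bookkeeping, relying only on the generating formula of Corollary \ref{cor_sum_f} and the elementary fact that sums and products of unramified symbols remain unramified.
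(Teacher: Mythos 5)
Your reduction to the $f_n^k$ is sound, but the key ``structural fact'' you invoke is substantially stronger than what the statement actually requires, and you mischaracterize its difficulty. You assert that an unramified $q\in I^n(K)$ decomposes as a sum of $n$-fold Pfister forms with unit entries, i.e.\ that the unramified part of $I^n(K)$ coincides with the image of $I^n(\mathcal{O}_K)$. This is a \emph{purity} statement for the fundamental filtration over a DVR (note that completeness of $v$ is not assumed), and it is emphatically \emph{not} a direct consequence of Springer's theorem or of the residue computations in \cite[\S 19]{EKM}. What does follow easily from those is only that an unramified $q$ has a diagonalization with unit entries, hence that $\hat{q}$ lies in the image of $GW(\mathcal{O}_K)\to GW(K)$; the refinement that this preimage can be chosen inside $I^n(\mathcal{O}_K)$ is a Gersten-type result whose proof is genuinely nontrivial. (Also, even granting purity, your decomposition must allow differences, $q=\sum\phi_i-\sum\psi_j$; writing a pure sum and then invoking Corollary~\ref{cor_sum_f} is too hasty, though this part is fixable via formula~(\ref{eq_fn_moins_phi}) since $-1$ is a unit.)

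The paper's proof bypasses this difficulty entirely, and that is precisely the point of how the $f_n^d$ are constructed. Since $\hat{q}$ comes from $GW(\mathcal{O}_K)$ and each exterior power $\lambda^i$ is a functorial operation on $GW$ commuting with the base change $\mathcal{O}_K\to K$, every $\lambda^i(\hat{q})$ is again in the image of $GW(\mathcal{O}_K)$, hence unramified. As $\pi_n^d$ is by construction a $\Z$-linear combination of the $\lambda^i$, the same holds for $\pi_n^d(\hat{q})$, and then $f_n^d(q)$ is unramified (tautologically for $A=W$, via the commutative diagram relating $\partial_\pi$ and $\partial$ for $A=H$). One then concludes by expressing $\alpha$ as a combination of the $f_n^d$ with coefficients in $A(k)$, which are unramified because $v$ is a $k$-valuation. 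So the paper only uses the easy ``diagonalize with unit entries'' fact; no purity for the $I^n$-filtration is needed. Your route is salvageable if you supply a genuine reference or proof for the purity input, but you should not present it as standard, and you should recognize that the construction of $\pi_n^d$ out of the $\lambda^i$ is exactly what makes the direct argument work.
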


\begin{proof}
  By hypothesis, $\hat{q}\in \hat{I}^n(K)$ comes from an element
  of $GW(\mathcal{O}_K)$, so any $\lambda^i(\hat{q})$ also comes
  from $GW(\mathcal{O}_K)$, and is unramified. Since $\pi_n^d$
  is a combination of the $\lambda^i$ with integer coefficients,
  $\pi_n^d(\hat{q})\in \hat{I}^{nd}(K)$ is unramified.

  Now tautologically if $A=W$, and applying the above commutative
  diagram if $A=H$, this implies that $f_n^d(q)\in A^{\pgq nd}(K)$
  is unramified.

  Since any $\alpha\in M(n)$ is a combination of the $f_n^d$ with
  coefficients in $A(k)$, and $v$ is a $k$-valuation, we can conclude
  that $\alpha(q)\in A(K)$ is unramified.
\end{proof}

\section{Invariants of $\mathbf{Quad_{2r}}$}

In \cite{GMS}, Serre gives a complete description of $\Inv(\Quad_m, A)$:
it is a free $A(k)$-module of rank $n+1$, with basis $(\lambda^d)_{0\ppq d\ppq m}$
for $A=W$, and the Stiefel-Whitney classes $(w_d)_{0\ppq d\ppq m}$ for $A=H$
(see \cite[27.16]{GMS} and \cite[17.1]{GMS}). Clearly any invariant of $I$ restricts to
an invariant of $\Quad_m$ for any even $m$, and we want to express it in
terms of the given basis.
\\

For practical purposes it is more convenient to introduce a different
basis for $\Inv(\Quad_m, W)$ which is the equivalent of the Stiefel-Whitney
classes for Witt invariants. We use the notations and definitions from
section 1. Recall from \cite[§5]{EKM} that the total Stiefel-Whitney
map $w_t: GW(K)\to \Lambda(H^*(K,\mu_2))$ is the only group morphism
such that $w_t(\fdiag{a})=1+(a)t$ for all $a\in K^*$. We generalize
this construction:

\begin{propdef}
  There is a unique group morphism
  \[ \foncdef{h_t}{GW(K)}{\Lambda(A(K))}{x}{h_t(x) = \sum_{d\in \N} h^d(x)t^d} \]
  such that $h_t(\fdiag{a}) = 1 + \{a\}t$ for all $a\in K^*$. The map $h^d$ takes
  values in $A^{\pgq d}(K)$. For any $m\in \N^*$, we write $h_m^d: \Quad_m(K)\to A(K)$
  for the restriction of $h^d$ to forms of dimension $m$.
  Then $h_m^d\in \Inv(\Quad_m,A^{\pgq d})$.

  If $A=H$, then $h^d$ is the Stiefel-Whitney map $w_d$. If $A=W$, we write
  $P^d=h^d$ and  $P_m^d = h_m^d$; then for any $q\in \Quad_m(K)$:
  \begin{equation}\label{eq_p}
    P^d_m(q) = \sum_{k=0}^d (-1)^k \binom{m-k}{d-k} \lambda^k(q). 
  \end{equation}

  In both cases, $(h_m^d)_{0\ppq d\ppq m}$ is a basis of the $A(k)$-module
  $\Inv(\Quad_m,A)$.
\end{propdef}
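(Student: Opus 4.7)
Since $GW(K)$ is generated as an abelian group by the one-dimensional classes $\fdiag{a}$, $a\in K^*$, uniqueness of $h_t$ is immediate once its values on the generators are specified. For existence, I would handle the two cases $A=H$ and $A=W$ separately, and then extract the remaining statements uniformly.

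For $A=H$, existence of $h_t = w_t$ is the classical Stiefel-Whitney construction \cite[§5]{EKM}. For $A=W$, I would use the $\lambda$-ring structure of $GW(K)$ and define
\[ P_t(x) = (1+t)^{\dim x}\cdot \lambda_{-t/(1+t)}(x) \in \Lambda(GW(K)). \]
This makes sense because $-t/(1+t) = -t + t^2 - \cdots \in t\Z[[t]]$, so lemma \ref{lem_lambda_compo} yields the pre-$\lambda$-ring structure $\lambda_{-t/(1+t)}$; in particular $x\mapsto \lambda_{-t/(1+t)}(x)$ is a group morphism. Combined with additivity of $\dim$, we get $P_t(x+y) = P_t(x)P_t(y)$. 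A direct computation shows $P_t(\fdiag{a}) = 1 + \pfis{a}t$, and expanding $(1+t)^{m-k}$ via the binomial theorem recovers formula (\ref{eq_p}) coefficient by coefficient. Composing coefficient-wise with the natural projection $GW(K)\to W(K)$ produces the required $h_t$.

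For the filtration statement, I would introduce the subgroup
\[ \Lambda^{\mathrm{filt}}(A(K)) = \Bigl\{1 + \sum_{d\pgq 1} c_d t^d : c_d \in A^{\pgq d}(K)\Bigr\} \subset \Lambda(A(K)), \]
closed under multiplication and inversion by a straightforward induction on degree. Since $h_t(\fdiag{a}) = 1 + \{a\}t$ visibly lies in $\Lambda^{\mathrm{filt}}$ and $GW(K)$ is generated by the $\fdiag{a}$'s, we get $h_t(x) \in \Lambda^{\mathrm{filt}}(A(K))$ for all $x$, and hence $h^d(x) \in A^{\pgq d}(K)$. Naturality of $\lambda^k$ (resp.\ $w_k$) in the base field gives $h_m^d \in \Inv(\Quad_m, A^{\pgq d})$.

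For the basis statement, Serre's theorems \cite[17.1, 27.16]{GMS} provide $(\lambda^d)_{0\ppq d\ppq m}$ as a basis of $\Inv(\Quad_m, W)$ and $(w_d)_{0\ppq d\ppq m}$ as a basis of $\Inv(\Quad_m, H)$. In the $A=H$ case, $h_m^d = w_d$ by the uniqueness characterization of Stiefel-Whitney classes, so there is nothing to check. In the $A=W$ case, formula (\ref{eq_p}) exhibits the transition matrix from $(\lambda^k)_{0\ppq k\ppq m}$ to $(P_m^d)_{0\ppq d\ppq m}$ as lower-triangular with diagonal entries $(-1)^d \in W(k)^\times$, so $(P_m^d)_d$ is also a basis. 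The main obstacle is proving existence of $h_t$ in the $A=W$ case: the generating-function definition sidesteps a direct verification of the Witt chain relation $\fdiag{a}+\fdiag{b} = \fdiag{a+b}+\fdiag{ab(a+b)}$ in $GW(K)$, reducing well-definedness entirely to the group-morphism property of the pre-$\lambda$ structure $\lambda_{-t/(1+t)}$.
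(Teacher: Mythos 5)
Your proposal is correct, and for the existence of $P_t$ in the case $A=W$ it takes a genuinely different route from the paper. The paper defines $P^d_m$ piecewise on $\Quad_m(K)$ via formula (\ref{eq_p}), checks the product formula $P_{m+n}^d(q+q')=\sum_k P_m^k(q)P_n^{d-k}(q')$ by a direct Vandermonde-type binomial computation, and only then extends to $GW(K)$ by the universal property of the Grothendieck group; the explicit expression (\ref{eq_p_pfis}) on diagonal forms is then read off and used for the filtration statement. You instead produce the morphism on all of $GW(K)$ at once by the generating-function identity $P_t(x)=(1+t)^{\dim x}\cdot\lambda_{-t/(1+t)}(x)$, so that multiplicativity is automatic (both factors are group morphisms), and you recover (\ref{eq_p}) by expanding $(1+t)^{m-k}$. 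Your filtration argument via the subgroup $\Lambda^{\mathrm{filt}}(A(K))$, closed under multiplication and inversion, is also a clean replacement for the paper's explicit symbol expansion. Both approaches are sound; yours is somewhat more conceptual and avoids the binomial bookkeeping, at the cost of quoting the $\lambda$-ring formalism.

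One small inaccuracy: you invoke lemma \ref{lem_lambda_compo} to conclude that $\lambda_{-t/(1+t)}$ is a pre-$\lambda$-ring structure, but that lemma requires $f\in t+t^2R[[t]]$, i.e.\ degree-one coefficient equal to $1$, whereas $-t/(1+t)=-t+t^2-\cdots$ has degree-one coefficient $-1$. In fact $\lambda_{-t/(1+t)}$ is \emph{not} a pre-$\lambda$-ring structure ($\lambda^1$ would be $-\Id$, not $\Id$). This does not damage your argument, since the only property you use is that $x\mapsto\lambda_t(x)\circ f$ is a group morphism $GW(K)\to\Lambda(GW(K))$, and that holds for any $f\in tR[[t]]$ by exactly the first part of the computation in the proof of lemma \ref{lem_lambda_compo}. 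You should just say ``group morphism'' rather than ``pre-$\lambda$-ring structure'' and avoid citing the lemma for a conclusion it doesn't give.
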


\begin{proof}
  The uniqueness of $h_t$ is obvious since $GW(K)$ is generated by the $\fdiag{a}$
  as an additive group. For $A=H$, the existence can either be deduced from
  the case $A=W$, or from the classical existence of Stiefel-Whitney maps.
  For $A=W$, we define $P^d$ piecewise on quadratic forms, using formula
  (\ref{eq_p}) for $P^d_m$ in each dimension $m$. We see immediately
  from the definition that $P_1^d(\fdiag{a})$ is 1 if $d=0$, $\pfis{a}$ if $d=1$,
  and $0$ if $d\pgq 2$. The fact that this extends to a group morphism
  $GW(K)\to \Lambda(GW(K))$ can be deduced using
  the universal property of Grothendieck groups if we can show that for any
  $q\in \Quad_m(K)$, $q'\in \Quad_n(K)$, we have:
  \[P_{m+n}^d(q+q')= \sum_{k=0}^d P_m(q)P_n(q'). \]
  And indeed we find:
  \begin{align*}
    \sum_{k=0}^d P_m(q)P_n(q') &= \sum_{k=0}^d\sum_{i=0}^k\sum_{j=0}^{d-k}
       (-1)^{i+j}\binom{m-i}{k-i}\binom{n-j}{d-k-j}\lambda^i(q)\lambda^j(q') \\
    &= \sum_{l=0}^d(-1)^l\sum_{i+j=l}\left( \sum_{k=i}^{d-j}\binom{m-i}{k-i}\binom{n-j}{d-k-j}\right) \lambda^i(q)\lambda^j(q') \\
    &= \sum_{l=0}^d (-1)^l\binom{m+n-l}{d-l} \sum_{i+j=l}\lambda^i(q)\lambda^j(q') \\
    &= P_{m+n}(q+q').
  \end{align*}
  From the group property we easily see that
  \begin{equation}\label{eq_p_pfis}
    h_m^d(\fdiag{a_1,\dots,a_m}) = \sum_{i_1<\dots <i_d} \{a_{i_1},\dots,a_{i_d}\}
  \end{equation}
  so $h_m^d(q)\in I^d(K)$ if $q\in \Quad_m(K)$. The fact that $h_m^d$ is an
  invariant is obvious given the definition with the $\lambda$-powers, or
  can be deduced from the uniqueness statement. Finally, the fact that the
  $h_m^d$ for a basis of $\Inv(\Quad_m,A)$ is a consequence of Serre's result,
  directly for $A=H$, and observing for $A=W$ that the transition matrix from
  $(P_m^d)_{0\ppq d\ppq m}$ to $(\lambda^d)_{0\ppq d\ppq m}$ is triangular unipotent.
\end{proof}

\begin{rem}
  Note that this does not define a pre-$\lambda$-ring structure on $GW(K)$
  since $P^1$ is not the identity (indeed, $P^1(\fdiag{a})=\pfis{a}$).
\end{rem}

Then we can state:

\begin{prop}\label{prop_fixed_dim}
  Let $m=2r\in \N^*$, $d\in \N$ and $q\in \Quad_m(K)$. Then :
  \begin{align*}
    f_1^d(q) &= \sum_{i=0}^d (-1)^i \binom{r-i}{d-i}\{-1\}^{d-i} h_m^i(q) \\
    g_1^d(q) &= \sum_{i=0}^d (-1)^i\binom{r-i-1 + \lfloor \frac{d+1}{2} \rfloor}{d-i} \{-1\}^{d-i} h_m^i(q). 
  \end{align*}
\end{prop}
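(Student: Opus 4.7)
The plan is to establish the formula for $f_1^d$ first via a generating-function argument, and then deduce the formula for $g_1^d$ by substituting into the expansion of proposition~\ref{prop_f_g}. Both sides of each formula are invariants on $\Quad_m$, so it suffices to verify them on a diagonal form $q=\fdiag{a_1,\dots,a_{2r}}$, where the Stiefel-Whitney invariants satisfy $h_t(q) = \prod_{j=1}^{2r}(1+\{a_j\}t)$.

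The first key step is to compute $\hat{[q]}\in \hat{I}(K)$: using the identity $\pfis{a}+\pfis{-a}=\pfis{-1}$ in $W(K)$, a short check gives $\hat{[q]} = r\gpfis{-1}-\sum_{j=1}^{2r}\gpfis{a_j}$. Since $(\pi_1)_t$ is a group morphism $(GW(K),+)\to(\Lambda(GW(K)),\cdot)$ with $(\pi_1)_t(\gpfis{x}) = 1+\gpfis{x}t$ by theorem~\ref{thm_pi}, we obtain
\[ (\pi_1)_t(\hat{[q]}) = (1+\gpfis{-1}t)^r\prod_{j=1}^{2r}(1+\gpfis{a_j}t)^{-1}. \]
Each factor has its $t^k$-coefficient in $\hat{I}^k(K)$; this is crucial, because (\ref{eq_prod_fn}) then lets us compute the image under $f_d$ of the $t^d$-coefficient of the product by multiplying together $f_k$ applied to each graded piece. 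Using $\gpfis{x}^k = 2^{k-1}\gpfis{x}$ for $k\pgq 1$, one checks that $f_*((1+\gpfis{-1}t)^r) = (1+\{-1\}t)^r$ and $f_*((1+\gpfis{a_j}t)^{-1}) = (1+(\{-1\}-\{a_j\})t)/(1+\{-1\}t)$, yielding
\[ \sum_{d\pgq 0}f_1^d(q)\,t^d = \frac{\prod_{j=1}^{2r}\bigl(1+(\{-1\}-\{a_j\})t\bigr)}{(1+\{-1\}t)^{r}}. \]

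On the other hand, using $h_t(q) = \prod_j(1+\{a_j\}t)$ one rewrites the proposed right-hand side of the $f_1^d$ formula as
\[ \sum_{d}\Bigl(\sum_{i=0}^d(-1)^i\binom{r-i}{d-i}\{-1\}^{d-i}h_m^i(q)\Bigr)t^d = (1+\{-1\}t)^r\prod_{j=1}^{2r}\Bigl(1-\tfrac{\{a_j\}t}{1+\{-1\}t}\Bigr), \]
which reduces to the same generating function, establishing the formula for $f_1^d$. For $g_1^d$, substitute the $f_1^k$ formula into the expansion given by proposition~\ref{prop_f_g}; the coefficient of $(-1)^i\{-1\}^{d-i}h_m^i(q)$ becomes the single sum $\sum_j\binom{\lfloor(d-1)/2\rfloor}{j}\binom{r-i}{r-\lfloor d/2\rfloor-1-j}$, which by Vandermonde's identity equals $\binom{\lfloor(d-1)/2\rfloor+r-i}{r-\lfloor d/2\rfloor-1} = \binom{r-i-1+\lfloor(d+1)/2\rfloor}{d-i}$, using $\lfloor(d-1)/2\rfloor+1 = \lfloor(d+1)/2\rfloor$.

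The main technical point is the ``graded multiplicativity'' of the formal map $f_*$, which rests on each factor having coefficients in the correct filtration level together with equation~(\ref{eq_prod_fn}); once this is in place, the rest is combinatorial bookkeeping, with some care needed in the Vandermonde step to handle the parities of $d$ uniformly.
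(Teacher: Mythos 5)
Your proof is correct but takes a genuinely different route from the paper's. For the $f_1^d$ formula, the paper argues by ``induction on shifting'': it names the right-hand side $\alpha_m^d$, verifies $\alpha_2^d(\pfis{1})=0$ and the recursion $\alpha_{m+2}^d(q+\pfis{a})=\alpha_m^d(q)+\{a\}\alpha_m^{d-1}(q)$, and concludes from the uniqueness characterization $f_1^0=1$, $(f_1^d)^+=f_1^{d-1}$. You instead compute the generating function on a diagonal form directly: you write $\hat{[q]}=r\gpfis{-1}-\sum_j\gpfis{a_j}$, use that $(\pi_1)_t$ is a group morphism sending $\gpfis{x}\mapsto 1+\gpfis{x}t$, and push the resulting product forward to $A(K)[[t]]$ using the graded multiplicativity coming from~(\ref{eq_prod_fn}), then match with the Stiefel--Whitney generating function $h_t(q)=\prod_j(1+\{a_j\}t)$. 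This is more self-contained (it bypasses the uniqueness part of proposition~\ref{prop_f}) and makes the answer look inevitable once the two generating functions are written down, whereas the paper's inductive argument fits its general ``integrate along $\Phi^+$'' methodology and avoids power-series bookkeeping. For $g_1^d$ the paper explicitly proposes the substitution via proposition~\ref{prop_f_g} that you carry out, so there the two approaches coincide in spirit.

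One small technical caveat in your Vandermonde step: you rewrite $\binom{r-i}{k-i}$ as $\binom{r-i}{r-\lfloor d/2\rfloor-1-j}$ via the symmetry $\binom{n}{p}=\binom{n}{n-p}$, which fails when the upper argument $r-i$ is negative (and that case does occur, since $h_m^i(q)$ can be nonzero up to $i=2r$). The combinatorial identity you want,
\[
\sum_j\binom{\lfloor(d-1)/2\rfloor}{j}\binom{r-i}{\,j+\lfloor d/2\rfloor+1-i\,}=\binom{r-i-1+\lfloor(d+1)/2\rfloor}{d-i},
\]
is nonetheless true: apply the symmetry to the \emph{first} factor $\binom{\lfloor(d-1)/2\rfloor}{j}=\binom{\lfloor(d-1)/2\rfloor}{\lfloor(d-1)/2\rfloor-j}$ (always valid since $\lfloor(d-1)/2\rfloor\pgq 0$ for $d\pgq 1$), reindex, and note $\lfloor(d-1)/2\rfloor+\lfloor d/2\rfloor+1-i=d-i$, so one lands on Chu--Vandermonde $\sum_m\binom{a}{m}\binom{b}{p-m}=\binom{a+b}{p}$ with $a=\lfloor(d-1)/2\rfloor\pgq 0$, $p=d-i\pgq 0$ and arbitrary $b=r-i\in\Z$, which is safe for negative $b$. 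So the subtlety to watch is negative upper arguments in binomials, not the parity of $d$.
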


\begin{proof}
  We prove the statement concerning $f_1^d$; the case of $g_1^d$
  may be deduced by a lengthy but straightforward computation
  using proposition \ref{prop_f_g}, or can be directly proved by
  the same method.

  Write $\alpha_m^d$ for the invariant of $\Quad_m$ defined by the
  right-hand side of the equation. It is clear by definition that $\alpha_m^0 = 1$
  coincides with $f_1^0$ on $\Quad_m$. We claim that it is enough to
  show that for any $d\in \N^*$:
  \begin{equation}\label{eq_alpha_norm}
    \alpha_2^d(\pfis{1})=0,
  \end{equation}
  and for any $m=2r\in \N^*$, any $q\in \Quad_m(K)$ and any $a\in K^*$:
  \begin{equation}\label{eq_alpha_add}
    \alpha_{m+2}^d(q+\pfis{a}) = \alpha_m^d(q)+\{a\} \alpha_m^{d-1}(q).
  \end{equation}
  Indeed, taking $a=1$ in (\ref{eq_alpha_add}) shows that $\alpha_m^d(q)$
  depends only on the Witt class of $q\in \Quad_m(K)$, so it defines an invariant
  $\alpha^d\in M(1)$. Then (\ref{eq_alpha_norm}) shows that $\alpha^d$ is
  normalized, and (\ref{eq_alpha_add}) shows that $(\alpha^d)^+=\alpha^{d-1}$,
  so by an immediate induction $\alpha^d = f_1^d$.

  From the formula (\ref{eq_p_pfis}) we easily see that $h_2^0(\pfis{a})=1$, 
  $h_2^1(\pfis{a})=\{-a\}$, and $h_2^i(\pfis{a})=0$ if $i\pgq 2$. Thus
  $\alpha_2^d(\pfis{1}) = \{-1\}^d - \{-1\}^{d-1}\cdot \{-1\}=0$
  which shows (\ref{eq_alpha_norm}).

  Furthermore, if $i\in \N$ and $q\in \Quad_m(K)$:
  \begin{align}
    h_{m+2}^i(q+\pfis{a}) &= h_m^i(q)+\{-a\} h_m^{i-1}(q) \nonumber \\
                    &= \left(h_m^i(q)+\{-1\}h_m^{i-1}(q)\right) -\{a\}h_m^{i-1}(q)
  \end{align}
  (where by convention $h_m^{-1}=0$), therefore:
  \begin{align*}
    \alpha_{m+2}(q+\pfis{a})
        =& \sum_{i=0}^d (-1)^i\binom{r+1-i}{d-i}\{-1\}^{d-i}\left(h_m^i(q)+\{-1\}h_m^{i-1}(q)\right) \\
         &- \{a\} \sum_{i=0}^d (-1)^i\binom{r+1-i}{d-i}\{-1\}^{d-i}h_m^{i-1}(q) \\
        =& \sum_{i=0}^{d-1}\left( (-1)^i\binom{r-i+1}{d-i}+(-1)^{i+1}\binom{r-i}{d-i-1}\right)\{-1\}^{d-i} h_m^i(q)\\
         &+ (-1)^d h_m^d(q) -\{a\} \sum_{i=0}^{d-1}(-1)^{i+1}\binom{r-i}{d-i-1} \{-1\}^{d-i-1}h_m^i(q) \\
        =& \sum_{i=0}^d (-1)^i\binom{r-i}{d-i}\{-1\}^{d-i}h_m^i(q) \\
         &+ \{a\} \sum_{i=0}^{d-1}(-1)^i\binom{r-i}{d-1-i} \{-1\}^{d-1-i}h_m^i(q)
  \end{align*}
  which gives the expected formula.
\end{proof}

\begin{rem}\label{rem_fixed_dim}
  In particular, looking carefully at the binomial coefficients in the formula
  and remembering that $h_m^i=0$ if $i>m$, we retrieve the fact that
  $g_1^d$ is zero if $d>m$ (recall corollary \ref{cor_g_fixed_dim}). On the other
  hand, we see that $f_1^d$ can be non-zero for arbitrarily high values of $d$,
  even for fixed $m$.
\end{rem}

\begin{rem}\label{rem_sw_moins_1}
  If $-1$ is a square in $k$, then $f_1^d = g_1^d = h_m^d$ on
  $\Quad_m$ for any even $m\in \N^*$.
\end{rem}

\begin{coro}\label{cor_fix_dim}
  For any even $m\in \N^*$, the restrictions of $f_1^d$ (or $g_1^d$)
  for $0\ppq d\ppq m$ form an $A(k)$-basis of $\Inv(\Quad_m, A)$.
  In particular, any invariant of $\Quad_m$ can be extended to $I$.
\end{coro}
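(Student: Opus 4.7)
The plan is to use proposition \ref{prop_fixed_dim} directly, which already expresses the restriction of $f_1^d$ (resp.\ $g_1^d$) to $\Quad_m$ as an explicit $A(k)$-linear combination of the $h_m^i$ with $0 \le i \le d$. Since the preceding propdef tells us (via Serre) that $(h_m^i)_{0 \le i \le m}$ is an $A(k)$-basis of $\Inv(\Quad_m, A)$, the basis assertion of the corollary reduces to showing that the $(m+1)\times(m+1)$ transition matrix from $(h_m^i)_{0\le i\le m}$ to $(f_1^d|_{\Quad_m})_{0\le d\le m}$ (resp.\ to $(g_1^d|_{\Quad_m})_{0\le d\le m}$) lies in $\mathrm{GL}_{m+1}(A(k))$.

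The first step is to observe that both formulas of proposition \ref{prop_fixed_dim} are lower triangular in $(d,i)$, since only terms with $i \le d$ appear. The diagonal entry in row $d$ is the coefficient at $i=d$: for $f_1^d$ this is $(-1)^d \binom{r-d}{0}\{-1\}^0 = (-1)^d$, and for $g_1^d$ it is $(-1)^d \binom{r-d-1+\lfloor (d+1)/2\rfloor}{0}\{-1\}^0 = (-1)^d$. In both cases every diagonal entry is $\pm 1$, hence a unit of $\Z \subset A(k)$. A lower triangular matrix with units on the diagonal is invertible over $A(k)$, so both families are indeed $A(k)$-bases of $\Inv(\Quad_m,A)$.

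For the "in particular" clause, given $\alpha \in \Inv(\Quad_m,A)$, I write $\alpha = \sum_{d=0}^{m} a_d\, f_1^d|_{\Quad_m}$ uniquely with $a_d \in A(k)$; the element $\widetilde{\alpha} := \sum_{d=0}^m a_d f_1^d \in M(1)$ is a well-defined (finite) combination in $\Inv(I,A)$. Since $m$ is even, the map $\Quad_m \to I$, $q \mapsto [q]$, makes sense, and by construction $\widetilde{\alpha}$ pulls back to $\alpha$ along it. This yields the desired extension to an invariant of $I$.

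There is no serious obstacle here: the proof is essentially a bookkeeping exercise, and the only nontrivial input, namely the triangular expression in terms of the $h_m^i$, has already been established in proposition \ref{prop_fixed_dim}. The mildly delicate point worth flagging is to notice that the diagonal coefficients come out to exactly $\pm 1$ (unaffected by any $\{-1\}^{d-i}$ factor, since $d-i=0$ on the diagonal), which is what guarantees that the change-of-basis matrix is invertible without inverting anything in $A(k)$.
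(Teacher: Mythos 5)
Your proof is correct and is essentially the argument the paper leaves implicit: Proposition~\ref{prop_fixed_dim} gives a lower-triangular change of basis from $(h_m^i)_{0\ppq i\ppq m}$ to the restricted $(f_1^d)_{0\ppq d\ppq m}$ (resp.\ $(g_1^d)$) with diagonal entries $(-1)^d$, hence invertible over $A(k)$, and the extension to $I$ follows by lifting the coefficients to the finite combination $\sum_d a_d f_1^d\in M(1)$. This mirrors the triangular-unipotent observation the paper already makes when comparing $(P_m^d)$ to $(\lambda^d)$, so you have identified the intended reasoning.
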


\begin{rem}
  Serre also describes the cohomological invariants of $\Quad_{m,\delta}$,
  meaning of forms with prescribed determinant $\delta$, and in particular
  this gives a description of invariants of $\Quad_m\cap I^2$.
  They are given by Stiefel-Whitney classes, plus one invariant that
  does not extend to $\Quad_m$ in general. Since any invariant
  of $I^2$ extends to $I$, this shows that there are invariants of
  $\Quad_m\cap I^2$ that do not extend to $I^2$.

  There are also examples in the literature of some invariants
  of $\Quad_m\cap I^3$ that one can show using the results
  in this article do not extend to $I^3$ (for instance
  the invariant $a_5$ mentioned in section \ref{sec_factor}).
\end{rem}

\begin{rem}
  Let us consider the cohomological invariants of $\Quad_m/\sim$
  (the similarity classes of quadratic forms of dimension $n$).
  This is of course the same thing as an invariant of $\Quad_m$
  which is constant on similarity classes, so according to corollary
  \ref{cor_fix_dim} any such invariant is a unique combination of the $v_d^{(1)}$
  with $0\ppq d\ppq m$. Now corollary \ref{cor_inv_sim} shows
  that such a combination is constant on similarity classes iff
  the only $d$ that appear are odd. This is exactly the description
  that Rost gives in \cite[lem 2]{R98}, where he proves that any invariant
  of $\Quad_m/\sim$ is a unique combination of invariants he calls
  $v_{2i+1}$, and a simple computation shows that $v_{2i+1}=v_{2i+1}^{(1)}$.

  On the other hand, our present tools cannot a priori describe all
  cohomological invariants of similarity classes in $\Quad_m\cap I^2$,
  since not all invariants of $\Quad_m\cap I^2$ extend to $I^2$.
  What we can say from the previous remark and corollary \ref{cor_inv_sim}
  is that those which do extend to $I^2$ can be uniquely written
  as $\sum_{d=0}^r a_d\cup v_{2d}^{(2)}$ with $(-1)\cup a_d=0$
  if $d>0$ is even. However, Rost describes in \cite[thm 6]{R98} the
  invariants of similarity classes $\Quad_m\cap I^2$, and proves
  that they are combinations of invariants $\eta_d$. In turns out
  that $\eta_d = v_{2d}^{(2)}$, so this shows that even though some invariants of
  isometry classes in $\Quad_m\cap I^2$ do not extend to $I^2$, all
  invariants of \emph{similarity classes} in $\Quad_m\cap I^2$ do
  extend to $I^2$ (and therefore to $I$), and Rost's description is
  exactly the same as ours.
\end{rem}

\section{Operations on mod 2 cohomology}\label{sec_operations}

In this section we are specifically interested in cohomological
invariants. It was observed by Serre that one may define some
sorts of divided squares on mod 2 cohomology : 

\[ \anonfoncdef{H^n(K,\mu_2)}{H^{2n}(K,\mu_2)/(-1)^{n-1}\cup H^{n+1}(K,\mu_2)}
{\sum_i \alpha_i}{\sum_{i<j} \alpha_i\cup \alpha_j.} \]

The quotient on the right-hand side is necessary for the map
to be well-defined. Similarly, one may define higher divided
powers :

\[ \anonfoncdef{H^n(K,\mu_2)}{H^{dn}(K,\mu_2)/(-1)^{n-1}\cup H^{(d-1)n+1}(K,\mu_2)}
{\sum_i \alpha_i}{\sum_{i_1<\dots <i_d} \alpha_{i_1}\cup\cdots \cup \alpha_{i_d}.} \]

On the other hand, Vial (\cite{Via}) characterizes natural
operations
\[ H^n(K,\mu_2) \To H^*(K,\mu_2) \]
(his statement is formulated for mod 2 Milnor K-theory, which is
equivalent according to the resolution of Milnor's conjecture).
The precise statement, slightly reformulated, is the following (the
original statement forgets to explicitly assume that operations must
have uniformly bounded degree):

\begin{prop}[\cite{Via}, Theorem 2]
  If $n\in \N^*$, the $H^*(k,\mu_2)$-module of operations
  $H^n(K,\mu_2)\to H^*(K,\mu_2)$ with uniformly bounded degree is
  \[ H^*(k,\mu_2)\cdot 1 \oplus H^*(k,\mu_2)\cdot \Id \oplus  \bigoplus_{d\in \N} \Ker(\tau_n)\cdot \theta_d \]
  where $\tau_n: H^*(k,\mu_2)\To H^*(k,\mu_2)$ is defined by $\tau_n(x) = (-1)^{n-1}\cup x$
  and if $a\in \Ker(\tau_n)$, then
  \[ a\cdot \theta_d \left(\sum_{1\ppq i\ppq r} x_i\right) = a\cdot \sum_{i_1<\cdots <i_d}x_{i_1}\cup \cdots \cup x_{i_d} \]
  where the $x_i$ are symbols.
\end{prop}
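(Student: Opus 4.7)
The plan is to reduce the classification of operations $H^n\to H^*$ with uniformly bounded degree to the classification of invariants in $M'(n)$ provided by Theorem \ref{thm_g}, via the surjection $e_n : I^n\twoheadrightarrow H^n$ whose kernel is $I^{n+1}$. Composition with $e_n$ identifies such operations with the invariants $\alpha \in M'(n)$ satisfying the descent condition $\alpha(q+q')=\alpha(q)$ for every $q\in I^n(L)$, $q'\in I^{n+1}(L)$, and every extension $L/k$; it remains to express this condition in terms of the expansion $\alpha=\sum_d a_d u^{(n)}_{nd}$ provided by Theorem \ref{thm_g}.

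First I would compute, using the additivity property (ii) of Proposition \ref{prop_f},
\[
\alpha(q+q')-\alpha(q) = \sum_{k\pgq 0} u^{(n)}_{nk}(q)\cdot B_k(q'),\qquad B_k(q')=\sum_{j\pgq 1}a_{k+j}u^{(n)}_{nj}(q').
\]
For $q'\in I^{n+1}$, Corollary \ref{cor_restr_cohom} kills the odd-$j$ contributions and reduces the even ones, giving
\[
B_k(q') = \sum_{m\pgq 1} a_{k+2m}\,(-1)^{m(n-1)}\cup u^{(n+1)}_{(n+1)m}(q').
\]

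Next I would argue that the descent condition is equivalent to $B_k=0$ in $M(n+1)$ for every $k\pgq 0$: viewing $q\mapsto \sum_k u^{(n)}_{nk}(q)\cdot B_k(q')$ as an invariant of $I^n$ over $L$ for fixed $q'$, Theorem \ref{thm_g} applied over $L$ yields $H^*(L,\mu_2)$-linear independence of the $u^{(n)}_{nk}$, forcing each $B_k(q')=0$. Applying Theorem \ref{thm_g} once more, now to $I^{n+1}$ over $k$, the vanishing of $B_k$ becomes $(-1)^{m(n-1)}\cup a_{k+2m}=0$ in $H^*(k,\mu_2)$ for all $k\pgq 0$ and $m\pgq 1$. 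For fixed $d\pgq 2$, the strongest such constraint (obtained with $m=1$, $k=d-2$) is $(-1)^{n-1}\cup a_d=0$, i.e.\ $a_d\in \Ker(\tau_n)$; the coefficients $a_0$ and $a_1$ remain unconstrained.

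Finally I would identify the resulting description with Vial's basis: $u^{(n)}_n=e_n$ descends to $\Id:H^n\to H^n$, while for $d\pgq 2$, Corollary \ref{cor_sum_f} gives
\[
u^{(n)}_{nd}\!\left(\textstyle\sum_i \phi_i\right) = \sum_{i_1<\cdots<i_d} e_n(\phi_{i_1})\cup\cdots\cup e_n(\phi_{i_d}),
\]
so $u^{(n)}_{nd}$ descends to precisely the divided power $\theta_d$. The main obstacle is justifying the $H^*(L,\mu_2)$-linear independence used in the descent step, which amounts to observing that Theorem \ref{thm_g} holds with $L$ in place of $k$, so that expansion in the $u^{(n)}_{nk}$ remains unique when the base field is enlarged to $L$ and coefficients are taken in $H^*(L,\mu_2)$.
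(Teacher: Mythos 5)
Your proof is correct and follows essentially the same route as the paper: reduce to the descent condition on $M'(n)$, use Corollary \ref{cor_restr_cohom} to kill the odd-index restrictions to $I^{n+1}$, and invoke uniqueness of the $u^{(n)}_{nd}$-expansion to isolate the coefficient constraints, which give $a_d\in\Ker(\tau_n)$ for $d\pgq 2$. The paper packages the key computation a bit more compactly: it proves the lemma
\[
\alpha(q+\phi) = \alpha(q) + (-1)^{n-1}\cup e_{n+1}(\phi)\cup \alpha^{++}(q)
\quad\text{for }\phi\in\Pf_{n+1}(K),
\]
by restricting to Pfister forms and using the shifting operator $\Phi$, so the descent condition becomes $(-1)^{n-1}\cup\alpha^{++}=0$ in one stroke (one application of uniqueness plus Lemma \ref{lem_a_filtr}), whereas you work with general $q'\in I^{n+1}$ and need two rounds of linear-independence. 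Two small imprecisions worth noting: the expansion $\alpha=\sum_d a_d\cup u^{(n)}_{nd}$ and its uniqueness come from the corollary following Theorem \ref{thm_g}, not from the theorem itself (which is about the $g_n^d$); and the statement that ``$u^{(n)}_{nd}$ descends to $\theta_d$'' should be phrased as ``$a_d\cup u^{(n)}_{nd}$ descends to $a_d\cdot\theta_d$,'' since $\theta_d$ alone is not well-defined without the coefficient in $\Ker(\tau_n)$. Your closing remark that the uniqueness statement must hold with $L$ in place of $k$ is the right thing to flag; it is not a gap, since the paper's constructions and proofs are uniform over the base field.
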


Note that the ``divided power operation'' $\theta_d$ is not defined on its own, but
$a\cdot \theta_d$ is well-defined when $a\in \Ker(\tau_n)$. This is
similar to how for Serre's operations it was necessary to consider some
quotient on the right-hand side of the map; here one has to put some restriction
on the left-hand side, in both cases to annihilate appropriate
powers of the symbol $(-1)\in H^1(K,\mu_2)$.
The remarkable phenomenon is that when we work on the level of $I^n$,
we can lift those $\theta_d$ with no restriction: this is our
$u_{nd}^{(n)}$.

Moreover, it is not too difficult to retrieve Vial's theorem using
our results about invariants of $I^n$: operations on $H^n(K,\mu_2)$
are none other than invariants $\alpha\in M(n)$ (with $A=H$) such that
\begin{equation}\label{eq_vial}
\alpha(q+\phi)=\alpha(q) \quad \forall q\in I^n(K), \phi\in \Pf_{n+1}(K).
\end{equation}

Consider the following easy lemma :

\begin{lem}
  Let $n\in \N^*$, and let us restrict to $A=H$. For any $\alpha\in M(n)$,
  any $q\in I^n(K)$ and any $\phi\in \Pf_{n+1}(K)$, we have
  \[ \alpha(q+\phi) = \alpha(q) + (-1)^{n-1}\cup e_{n+1}(\phi)\cup \alpha^{++}(q). \]
\end{lem}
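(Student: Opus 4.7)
The plan is to reduce to the case of a single $(n+1)$-fold Pfister form and decompose it as a signed sum of three $n$-fold Pfister forms, so that the operators $\Phi^{\pm}$ can be applied directly. Writing $\phi = \pfis{a, b_1, \ldots, b_n}$ and $\psi = \pfis{b_2, \ldots, b_n}$, the identity $\pfis{a, b_1} = \pfis{a} + \pfis{b_1} - \pfis{ab_1}$ in $W(K)$ (a standard Pfister identity, easily checked) gives
\[ \phi = \chi_1 + \chi_2 - \chi_3, \qquad \chi_1 = \pfis{a}\psi,\ \chi_2 = \pfis{b_1}\psi,\ \chi_3 = \pfis{ab_1}\psi, \]
with $\chi_1, \chi_2, \chi_3 \in \Pf_n(K)$. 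A key auxiliary observation is that in $H^n(K, \mu_2)$ one has $e_n(\chi_3) = e_n(\chi_1) + e_n(\chi_2)$, by mod $2$ bilinearity of Galois symbols.

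Next I would apply the defining identities $\alpha(q' + \chi) = \alpha(q') + e_n(\chi)\alpha^+(q')$ and $\alpha(q' - \chi) = \alpha(q') - e_n(\chi)\alpha^-(q')$ iteratively: first add $\chi_1$, then $\chi_2$, then subtract $\chi_3$. Writing $A = e_n(\chi_1)$, $B = e_n(\chi_2)$, $C = e_n(\chi_3)$, this expands to
\[ \alpha(q + \phi) = \alpha(q) + (A+B)\alpha^+(q) + AB\,\alpha^{++}(q) - C\alpha^-(q) - C(A+B)\alpha^{-+}(q) - CAB\,\alpha^{-++}(q). \]

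The technical heart of the argument is the collapse of nearly all these terms using four facts specific to $A = H$: (a) $C = A + B$; (b) the squaring rule $e_n(\chi)^2 = \{-1\}^n e_n(\chi)$, which follows from $\{x\}^2 = \{x, -1\}$ in Milnor K-theory modulo $2$; (c) the relation $\alpha^+ - \alpha^- = \{-1\}^n \alpha^{-+}$ from proposition \ref{prop_phi_pm}; (d) the characteristic $2$ of $H^*(K, \mu_2)$. By (c), the terms in $\alpha^+$ and $\alpha^-$ combine into $(A+B)\{-1\}^n\alpha^{-+}(q)$; by (a) and (b), $C(A+B) = (A+B)^2 = A^2 + B^2 = \{-1\}^n(A+B)$, cancelling the previous contribution modulo $2$; and $CAB = (A+B)AB = A^2B + AB^2 = 2\{-1\}^n AB$ vanishes in characteristic $2$. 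What remains is precisely $\alpha(q) + AB\,\alpha^{++}(q)$.

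Finally, a direct computation using mod $2$ graded commutativity and $\{b_i\}^2 = \{b_i, -1\}$ gives
\[ AB = \{a, b_2, \ldots, b_n\}\cup\{b_1, b_2, \ldots, b_n\} = \{a\}\cup\{b_1\}\cup\prod_{i=2}^n \{b_i, -1\} = \{-1\}^{n-1}\,e_{n+1}(\phi), \]
yielding the claimed formula. The main obstacle is not any single hard step but the careful bookkeeping of signs, shift compositions, and characteristic $2$ cancellations in the three-fold expansion; the only creative input is the initial choice of decomposition $\phi = \chi_1 + \chi_2 - \chi_3$ whose Pfister symbols satisfy the crucial additive relation $e_n(\chi_3) = e_n(\chi_1) + e_n(\chi_2)$ in mod $2$ cohomology.
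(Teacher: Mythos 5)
Your proof is correct, and it takes a genuinely different route from the paper. The paper's own argument is very short but structural: it first reduces (by linearity and pointwise convergence of the expansion $\alpha = \sum a_d f_n^d$) to the case $\alpha = u_{nd}^{(n)}$, then reads off from corollary \ref{cor_restr_cohom} that $u_{nd}^{(n)}(\phi)$ for a single $\phi\in\Pf_{n+1}$ is $1$ if $d=0$, is $(-1)^{n-1}\cup e_{n+1}(\phi)$ if $d=2$, and vanishes otherwise, and finally applies the binomial identity $f_n^d(q+\phi)=\sum_k f_n^k(q)f_n^{d-k}(\phi)$ from proposition \ref{prop_f}(ii). You instead work with arbitrary $\alpha$ directly: you split $\phi = \chi_1+\chi_2-\chi_3$ with $\chi_i\in\Pf_n(K)$ satisfying $e_n(\chi_3)=e_n(\chi_1)+e_n(\chi_2)$, expand with the shift operators $\Phi^\pm$, and collapse the six resulting terms using $\alpha^+-\alpha^-=\{-1\}^n\alpha^{+-}$ (proposition \ref{prop_phi_pm}), the squaring rule $\{x\}^2=\{-1\}\{x\}$, and characteristic $2$. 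Both are valid; the paper's route is shorter because the hard computation was outsourced to corollary \ref{cor_restr_cohom} (ultimately to the relation $x_{n+1}=p_n\circ x_n$ between the defining series of the $\pi_n^d$), whereas your argument is self-contained at the level of invariants and shift operators, needs no reduction to the $f_n^d$ basis nor any input from section on restrictions, and arguably makes the cancellation mechanism behind the factor $(-1)^{n-1}\cup e_{n+1}(\phi)$ more transparent. A minor stylistic note: you could have worked with the "positive" variant $\Phi^+$ only by writing $\phi=\chi_1+\chi_2-\chi_3$ as $\phi+\chi_3=\chi_1+\chi_2$ and applying $\Phi^+$ three times, but the version you chose is equivalent via the commutation $\alpha^{+-}=\alpha^{-+}$.
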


\begin{proof}
  Up to taking linear combinations, we may restrict to the
  case of $\alpha = u_{nd}^{(n)}$. Using corollary \ref{cor_restr_cohom},
  we see that $u_{nd}^{(n)}(\phi)$ is $1$ if $d=0$, $(-1)^{n-1}\cup e_{n+1}(\phi)$
  if $d=2$, and $0$ otherwise. Then using the sum formula for $u_{nd}^{(n)}$
  we find
  \[ u_{nd}^{(n)}(q+\phi)= u_{nd}^{(n)}(q) + (-1)^{n-1}\cup e_{n+1}(\phi)\cup u_{n(d-2)}^{(n)}(q). \qedhere \]
\end{proof}

Then $\alpha\in M(n)$ satisfies condition (\ref{eq_vial}) if and only if
$(-1)^{n-1}\cup \alpha^{++}=0$, which precisely means that if we write
$\alpha = \sum_d a_d\cup u_{nd}^{(n)}$ then, for $d\pgq 2$, $a_d\in \Ker(\tau_n)$,
and we indeed retrieve Vial's description.

\section{Invariants of semi-factorized forms}\label{sec_factor}

In \cite[§20]{Gar}, Garibaldi defines a cohomological invariant on $\Quad_{12}\cap I^3$
the following way: any such form can be written $q=\pfis{c}q'$ where $q'\in I^2(K)$,
and we set $a_5(q)=e_5(\pfis{c} \bar{\pi}_2^2(q')) = (c)\cup u_4^{(2)}(q')$
(using our notation). Of course, the non-trivial ingredient is that
$\pfis{c} \bar{\pi}_2^2(q')$ is actually independent of the decomposition of $q$.

This construction does not correspond to any of the tools we developped
so far, since it does not give an invariant of $I^3$. However, it
is easy to see that the construction works for any Witt class $q\in I^3(K)$
that factorizes as $q=\pfis{c}q'$. This leads us to the more general
definition:

\begin{defi}
  Let $n\in \N^*$ and $r\in \N$ such that $r\ppq n$. We set
  \[ I^{n,r}(K) = \ens{\phi\cdot q}{\phi\in \Pf_r(K),\, q\in I^{n-r}(K)}. \]
  We also define $M(n,r) = \Inv(I^{n,r},A)$, and similarly $M_0(n,r)$,
  $M^{\pgq d}(n,r)$ and $M_0^{\pgq d}(n,r)$.
  In particular, $I^{n,0} = I^n$, so $M(n,0)=M(n)$ and so on.
\end{defi}

\begin{rem}\label{rem_milnor}
  A consequence of Milnor's conjecture proved in \cite[41.7]{EKM} is that
  $I^{n,r}(K)= I^{r,r}(K)\cap I^n(K)$, so in particular
  $I^{n,r}(K)\cap I^{n+1}(K)=I^{n+1,r}(K)$. 
\end{rem}

Clearly, if $(m,s)\pgq (n,r)$, then $I^{m,s}(K)\subset I^{n,r}(K)$, so
we have a restriction morphism
\[ \foncdef{\rho_{(n,r),(m,s)}}{M(n,r)}{M(m,s)}{\alpha}{\alpha_{|I^{m,s}}} \]
which is a morphism of filtered $A(k)$-algebras, and sends
$M_0(n,r)$ to $M_0(m,s)$. In particular, when $r=s=0$, we retrieve
the restriction morphism $\rho_{n,m}$ defined in (\ref{eq_rho}).
We usually drop the indexes and simply write $\rho: M(n,r)\to M(m,s)$,
since the indexes can be infered from the source and target modules.

We can also define a morphism that goes in the other direction:
  
\begin{propdef}
  Let $n,r,t\in \N$ with $t\ppq r<n$.
  There is a unique morphism of filtered $A(k)$-modules
  \[  \foncdef{\Delta_{(n,r)}^t}{M(n,r)}{M(n \minus t,r\minus t)[-t]}{\alpha}{\alpha^{(t)},} \]
  such that $\alpha^{(t)}(0)=\alpha(0)$, and if $\alpha\in M_0(n,r)$ then
  \[ \alpha(\phi\cdot q) = f_t(\phi)\cdot \alpha^{(t)}(q) \]
  for any $\phi\in \Pf_t(K)$, and $q\in I^{n-t,r-t}(K)$.
  Furthermore, $\Delta_{(n,r)}^t$ is injective.
\end{propdef}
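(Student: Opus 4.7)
The plan is to imitate the constructions of $\Phi_n^\eps$ and $\Psi$: given $\alpha\in M(n,r)$, I split off the constant term via $\alpha^{(t)}(0) := \alpha(0)$, and then work only with normalized $\alpha\in M_0(n,r)$. For such $\alpha$ and a fixed $q\in I^{n-t,r-t}(K)$, the key idea is to regard $\phi\mapsto \alpha(\phi\cdot q)$ as an invariant of $\Pf_t$ over $K$ and extract the coefficient of $f_t$ via lemma \ref{lem_a_inv}.

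More precisely, I would first note that $\phi\cdot q$ genuinely lies in $I^{n,r}(L)$ for $\phi\in \Pf_t(L)$ and any extension $L/K$: writing $q=\psi q'$ with $\psi\in \Pf_{r-t}(K)$ and $q'\in I^{n-r}(K)$, we get $\phi\cdot q = (\phi\psi)\cdot q'$ with $\phi\psi\in \Pf_r(L)$. So $\beta_q(\phi) := \alpha(\phi\cdot q)$ defines an element of $\Inv(\Pf_t, A)$ over $K$, which by lemma \ref{lem_a_inv} can be written uniquely as $\beta_q = x_q + y_q\cdot f_t$ with $x_q,y_q\in A(K)$. Evaluating at $\phi=\pfis{1}$ (so that $f_t(\phi)=0$ and $\phi\cdot q=0$) gives $x_q=\alpha(0)=0$, and I set $\alpha^{(t)}(q) := y_q$. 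The uniqueness of $y_q$ makes it depend only on $q$ (independently of any factorization of $q$) and be compatible with field extensions, so $\alpha^{(t)}\in M_0(n-t,r-t)$, and $A(k)$-linearity of $\Delta$ follows by the same uniqueness. For the filtration, if $\alpha\in M^{\pgq d}(n,r)$ then $f_t(\phi)\cdot y_q = \alpha(\phi q)\in A^{\pgq d}(L)$ for every $\phi\in \Pf_t(L)$ and every $L/K$, so lemma \ref{lem_a_filtr} yields $y_q\in A^{\pgq d-t}(K)$, confirming that $\Delta$ has degree $-t$. Uniqueness of $\Delta$ is immediate from the defining identity combined with lemma \ref{lem_a_inv}.

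For injectivity, suppose $\Delta(\alpha)=0$. Then $\alpha(0)=\alpha^{(t)}(0)=0$, so $\alpha$ is normalized. Any element of $I^{n,r}(K)$ is by definition of the form $\psi\cdot q'$ with $\psi=\pfis{a_1,\dots,a_r}\in \Pf_r(K)$ and $q'\in I^{n-r}(K)$; splitting $\psi = \pfis{a_1,\dots,a_t}\cdot \pfis{a_{t+1},\dots,a_r}$ yields $\psi q' = \phi\cdot(\psi' q')$ with $\phi\in \Pf_t(K)$ and $\psi' q'\in I^{n-t,r-t}(K)$, so $\alpha(\psi q') = f_t(\phi)\cdot \alpha^{(t)}(\psi' q')=0$, giving $\alpha=0$. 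No step presents serious difficulty; the only mildly delicate point is that $I^{n,r}$ is just a set of products rather than a subgroup, but this is harmless because the defining identity is purely pointwise and the required factorization is built into the definition of $I^{n,r}$.
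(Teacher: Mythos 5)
Your proof is correct and follows essentially the same approach as the paper's: split off the constant component, view $\phi\mapsto\alpha(\phi\cdot q)$ as an invariant of $\Pf_t$ over $K$, apply lemma \ref{lem_a_inv} to extract the coefficient $y_q$ of $f_t$, and use lemma \ref{lem_a_filtr} for the filtration degree. The small extra observation you include — explicitly verifying that $\phi\cdot q$ lies in $I^{n,r}(L)$ by refactoring $q$ — is a reasonable sanity check that the paper leaves implicit.
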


\begin{proof}
  Since $M(n,r) = A(k)\oplus M_0(n,r)$, this piecewise definition of
  $\Delta_{(n,r)}^t$ determines the whole function. Let $\alpha\in M^{\pgq d}_0(n,r)$,
  and $q\in I^{n-t,r-t}(K)$. Then $\phi\mapsto \alpha(\phi\cdot q)$
  defines an invariant of $\Pf_t$ over $K$ with values in $A^{\pgq d}$.
  Using lemma \ref{lem_a_inv}, there are unique $x(q),y(q)\in A(K)$ such that
  \[  \alpha(\phi\cdot q) = x(q) + f_t(\phi)\cdot y(q)  \]
  and by uniqueness those are invariants of $I^{n-t,r-t}$, with $x = \alpha(0)=0$.
  We then set $\alpha^{(t)} := y$. Furthermore, using lemma \ref{lem_a_filtr},
  we see that $y(q)\in A^{\pgq d-t}(K)$, so $\alpha^{(t)}\in M_0^{\pgq d-t}(n\minus t,r\minus t)$.
  The injectivity is clear since any element of $I^{n,r}(K)$ is of the form
  $\phi q$ with $\phi$ and $q$ as in the statement, and $\alpha(\phi q)$ is
  determined by $\alpha^{(t)}$.
\end{proof}

We usually drop the indexes and simply write $\Delta^t: M(n,r)\to M(n \minus t,r\minus t)[-t]$.
Using this notation, it is clear by definition that $\Delta^t\circ \Delta^{t'}=\Delta^{t+t'}$.
The natural question is then:
\\

\textbf{Question:} What is the image of $\Delta^t:M(n+t,r+t)\to M(n,r)$ ?
\\

This can be rephrased as: for which $\beta\in M_0(n,r)$ is it true that for
all $\phi\in \Pf_t(K)$ and $q\in I^{n,r}(K)$, $f_t(\phi)\beta(q)$
only depends on $\phi q$? With this point of view, the existence of the invariant $a_5$
given at the beginning of the section (which is \cite[20.7]{Gar}) is exactly equivalent to the
fact that $f_2^2\in M(2,0)$ is in the image of of $\Delta^1: M(3,1)\to M(2,0)$.
The main result of the section is a generalization of this fact:

\begin{thm}\label{thm_delta}
  For any $n\in \N^*$, $\Delta^1: M_0(n+1,1)\to M_0(n)[-1]$ is an isomorphism
  of filtered $A(k)$-modules.
\end{thm}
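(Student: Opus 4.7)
The injectivity of $\Delta^1$ is already established in the preceding proposition-definition: any $x \in I^{n+1,1}(K)$ admits a decomposition $x = \pfis{c} q$, so $\alpha(x) = \{c\} \alpha^{(1)}(q)$ is entirely determined by $\alpha^{(1)}$. For the filtered statement one checks, using lemma \ref{lem_a_filtr}, that $\alpha \in M_0^{\pgq d}(n+1,1)$ if and only if $\alpha^{(1)} \in M_0^{\pgq d-1}(n) = M_0(n)[-1]^{\pgq d}$. Hence the real work is surjectivity.

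Given $\beta \in M_0(n)[-1]$, the obvious candidate for a preimage is the function $\alpha$ on $I^{n+1,1}(K)$ defined, on each representative, by $\alpha(\pfis{c} q) := \{c\} \beta(q)$. If this formula is independent of the chosen decomposition, then $\alpha$ is a natural transformation of functors, it is normalized (since $\pfis{1}\cdot q = 0$), and it clearly satisfies $\alpha^{(1)} = \beta$. The heart of the proof is thus the well-definedness: if $\pfis{c_1} q_1 = \pfis{c_2} q_2$ in $I^{n+1,1}(K)$ with $q_i \in I^n(K)$, one must show that $\{c_1\} \beta(q_1) = \{c_2\} \beta(q_2)$ in $A(K)$.

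Using theorem \ref{thm_g} to expand $\beta = \sum_d b_d g_n^d$, and invoking proposition \ref{prop_g_borne} so that for any fixed $q_i$ only finitely many terms contribute, it is enough to establish the identity for each $\beta = g_n^d$ separately. One may then further reduce, using the identity $\pfis{c_1} - \pfis{c_2} = \fdiag{c_2}\pfis{c_1/c_2}$ to rewrite the hypothesis, to the \emph{annihilator case}: if $\pfis{c} q = 0$ in $I^{n+1}(K)$, must one have $\{c\} g_n^d(q) = 0$ in $A(K)$?

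The hard part is precisely this annihilator statement. One strategy invokes the classical characterisation of the annihilator of $\pfis{c}$ in $W(K)$ as the image of the transfer from $W(K(\sqrt{c}))$, combined with the projection formula $\{c\}\cdot \mathrm{cor}_{K(\sqrt{c})/K}(-) = 0$ (valid because $\{c\}$ restricts to zero over $K(\sqrt{c})$), provided one has a suitable compatibility of $g_n^d$ with transfers. Alternatively, a generic/specialisation argument using proposition \ref{prop_ram} may bypass the compatibility issue by verifying the identity over a versal field and then taking residues. Once the annihilator vanishing is granted, well-definedness follows from the reduction above, and the filtration verification from the first paragraph completes the proof of surjectivity.
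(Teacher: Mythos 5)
You correctly identify that injectivity and the filtration compatibility were already settled (the latter via lemma \ref{lem_a_filtr} and the fact that $A$ is a filtered ring), and that the whole content is the well-definedness of $\pfis{c}q \mapsto \{c\}\beta(q)$. However, the rest of the plan has genuine gaps.

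First, reducing to $\beta = g_n^d$ is a tactical mistake. The paper instead reduces to $\beta = f_n^d$, and this choice is essential: the binomial formula $f_n^d(q + q') = \sum_k f_n^k(q)f_n^{d-k}(q')$ of proposition \ref{prop_f} is the mechanism that lets you absorb a "correction term" $q' = q_1 - q_2$ with $\pfis{c}q' = 0$ into the formula. The invariants $g_n^d$ satisfy no such clean addition formula, so even granting an annihilator statement for $g_n^d$, there is no visible way to deduce $\{c\}g_n^d(q_1) = \{c\}g_n^d(q_2)$ from $\pfis{c}(q_1 - q_2) = 0$. (Showing $f_n^d$ is in the image \emph{is} enough: the $g_n^d$ are finite $\Z$-combinations of the $f_n^k$, and then the infinite sum $\sum b_d g_n^d$ is handled via proposition \ref{prop_g_borne} exactly as you sketch.)

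Second, the reduction to "the annihilator case" is unsubstantiated and would not, on its own, give well-definedness. The identity $\pfis{c_1} - \pfis{c_2} = \fdiag{c_2}\pfis{c_1/c_2}$ does not turn the hypothesis $\pfis{c_1}q_1 = \pfis{c_2}q_2$ (with $q_1 \neq q_2$ and $c_1 \neq c_2$) into a statement about a single $\pfis{c}q = 0$. The paper splits the argument into two separate cases: for $c_1 = c_2 = c$, one uses the Elman--Lam type description \cite[6.23]{EKM} of $\ker(\pfis{c}\cdot) \cap I^n$ as generated by elements $\pfis{c'}q_0$ with $c'$ represented by $\pfis{c}$, then applies corollary \ref{cor_divisible} (divisibility of $f_n^k(\pfis{c'}q_0)$ by $\{c'\}$) together with $\{c\}\{c'\} = 0$ and the binomial formula. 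For $c_1 \neq c_2$, Hoffmann's chain result \cite[B.5]{Gar} produces an intermediate $q_0 = \sum \fdiag{x_i}\pfis{c_i}$ (with $c_i$ represented by $\pfis{c_1 c_2}$) through which both sides factor, and then a separate computational lemma \ref{lem_thm_delta} about how the $\lambda^k$ interact with such forms is needed to conclude $\{c_1\}f_n^d(q_0) = \{c_2\}f_n^d(q_0)$. This last step is not an annihilator statement at all.

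Third, your two suggested strategies for the annihilator case are only sketches and are not what the paper does. The transfer route would require knowing that (a) any $q \in I^n(K)$ killed by $\pfis{c}$ is a transfer of something in $I^{n-1}(K(\sqrt c))$, and (b) $f_n^d$ or $g_n^d$ is compatible with Scharlau transfers — neither of which is established. The residue approach via proposition \ref{prop_ram} is also left unelaborated, and I do not see how ramification data would replace the concrete algebraic input of \cite[6.23]{EKM} and Hoffmann's chain lemma. In short: the reduction to fundamental invariants is on the right track, but the wrong basis is chosen, the case analysis ($c_1 = c_2$ versus $c_1 \neq c_2$) is missing, and the technical heart of the proof is only gestured at.
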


\begin{rem}
  This means that $\Delta^1: M_0(n+1,1)\to M_0(n)[-1]$ is a module isomorphism,
  but it is not a \emph{filtered} module isomorphism, since it is the identity
  on the constant components, and while the identity is a bijective filtered
  morphism from $A(k)$ to $A(k)[-1]$, it is of course not a filtered isomorphism.
\end{rem}

Before we prove theorem \ref{thm_delta}, we construct a common generalization
of $\rho$ and $\Delta^t$, which allows to make simple statements about the
general properties of both those morphisms. Most of that is not useful for the
proof of the theorem, but has some independant interest. 

\begin{defi}
  Let $m,n\in \N^*$ and $r,s\in \N$ such that $r<n$ and $s<m$. We say that
  a filtered $A(k)$-module morphism $M(n,r)\to M(m,s)[-t]$ is of type $\Omega^t$
  if it is a composition of morphisms $\omega_i: M(n_i,r_i)[-a_i]\to M(n_{i+1},r_{i+1})[-a_i-t_i]$
  for $i=0,\dots,d$, with $(n_0,r_0)=(n,r)$, $a_0=0$, $(n_{d+1},r_{d+1})=(m,s)$,
  $t=\sum_i t_i$, and $\omega_i$ is either $\rho$ (so $t_i=0$) or $\Delta^{t_i}$.
  
  In particular, we define $\omega$ of type $\Omega^1$:
  \[ \omega: M(n,r) \xrightarrow{\rho} M(n+1,r+1) \xrightarrow{\Delta^1} M(n,r)[-1]. \]
\end{defi}

\begin{rem}\label{rem_omega}
  It is not difficult to see that there exists a morphism $M(n,r)\to M(m,s)[-t]$
  of type $\Omega^t$ iff $t\pgq n-m$ and $t\pgq r-s$. 
\end{rem}

\begin{prop}\label{prop_omega}
  Let $m,n\in \N^*$ and $r,s\in \N$ such that $r<n$ and $s<m$, and let
  $t\in \N$ be such that $t\pgq n-m$ and $t\pgq r-s$. Then there is
  exactly one morphism $M(n,r)\to M(m,s)[-t]$ of type $\Omega^t$, and
  we call it simply $\Omega^t$. The morphism $\Omega^t: M(n,r)\to M(n,r)[-t]$
  is $\omega^t$.

  In particular, let $t'\pgq t$. Then the following diagram of filtered $A(k)$-modules
  commutes:
  \[ \begin{tikzcd}
      & M(n,r) \dlar[swap]{\Omega^t} \drar{\Omega^{t'}} & \\
    M(m,s)[-t] \arrow{rr}{\omega^{t'-t}} & & M(m,s)[-t'].
  \end{tikzcd} \]
\end{prop}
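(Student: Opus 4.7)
The plan is to exhibit a canonical form for morphisms of type $\Omega^t$: any such composition $M(n,r) \to M(m,s)[-t]$ will be shown to equal $\Delta^t \circ \rho$, where $\rho: M(n,r) \to M(m+t, s+t)$ is the unique restriction and $\Delta^t: M(m+t, s+t) \to M(m,s)[-t]$ is the unique shift. Existence of at least one composition of type $\Omega^t$ under the given numerical hypotheses is already granted by Remark \ref{rem_omega}, so only the uniqueness assertion has to be proved.

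The heart of the argument will be a commutation lemma: for $t \ppq r < n$ and $a, b \in \N$, the square formed by the restrictions $\rho: M(n,r) \to M(n+a, r+b)$ and $\rho': M(n-t, r-t) \to M(n+a-t, r+b-t)$, together with the shift morphisms $\Delta^t$ on each side, commutes. I would prove this by reducing to normalized $\alpha \in M_0(n,r)$ (on the constant summand both maps are the identity) and evaluating each composition at an arbitrary element $\phi q$ with $\phi \in \Pf_t(L)$ and $q \in I^{n+a-t, r+b-t}(L)$. The defining identity $\alpha(\phi q) = f_t(\phi) \Delta^t(\alpha)(q)$, applied once to $\alpha \in M(n,r)$ and once to $\rho(\alpha) \in M(n+a, r+b)$, forces $f_t(\phi)$ times the difference $[\rho(\Delta^t(\alpha)) - \Delta^t(\rho(\alpha))](q)$ to vanish for every Pfister $\phi$; Lemma \ref{lem_a_filtr} then gives the desired equality.

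Uniqueness of $\Omega^t$ would then follow by induction on the length $d+1$ of a composition $\omega_d \circ \cdots \circ \omega_0$. Applied to the length-$d$ sub-composition, the hypothesis yields the canonical form $\Delta^{t'} \circ \rho_0$. If $\omega_d = \Delta^{t''}$, the identity $\Delta^{t''} \circ \Delta^{t'} = \Delta^{t' + t''}$ absorbs the outer shift, matching the total shift $t = t' + t''$. If $\omega_d = \rho$, the commutation lemma rewrites $\rho \circ \Delta^{t'}$ as $\Delta^{t'} \circ \rho''$ (and the hypothesis $t' \ppq r'$ required by the lemma holds automatically, since $\Delta^{t'}$ was already applicable in the inner composition), after which two consecutive restrictions $\rho'' \circ \rho_0$ collapse into a single restriction. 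The main obstacle will be the index bookkeeping at this step: at each rearrangement one has to check that intermediate modules remain well-defined and that the applicability hypotheses of $\Delta$ are preserved, but this is a routine verification from the setup.

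Finally, the identification of $\Omega^t: M(n,r) \to M(n,r)[-t]$ with $\omega^t$ is immediate from uniqueness, since $\omega^t$ is visibly of type $\Omega^t$ as a composition of $t$ copies of the type-$\Omega^1$ morphism $\omega$. The triangle in the statement follows in exactly the same way: both $\omega^{t'-t} \circ \Omega^t$ and $\Omega^{t'}$ are morphisms of type $\Omega^{t'}$ from $M(n,r)$ to $M(m,s)[-t']$, and hence coincide by the uniqueness just established.
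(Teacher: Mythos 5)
Your proposal is correct and follows essentially the same route as the paper: both reduce uniqueness to the commutation square $\rho \circ \Delta^t = \Delta^t \circ \rho$, rearrange an arbitrary composition into the normal form $\Delta^t \circ \rho$ by induction on its length, and deduce the identification $\Omega^t = \omega^t$ and the commuting triangle purely from uniqueness. The only (cosmetic) difference is that the paper verifies the commutation square by checking that $\rho'(\Delta^t(\alpha))$ satisfies the defining relation that characterizes $\Delta^t(\rho(\alpha))$, while you evaluate both compositions on $\phi q$ and then cancel $f_t(\phi)$ using Lemma \ref{lem_a_filtr}; these are equivalent, since the uniqueness clause in the definition of $\Delta^t$ rests on the same mechanism.
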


\begin{proof}
  The only thing to prove is that there is at most one morphism of type
  $\Omega^t$. The fact that $\Omega^t=\theta^t$ then follows since $\omega^t$
  is of type $\Omega^t$ by definition, and the commutativity of the diagram
  comes from the fact that both compositions are of type $\Omega^{t'}$.

  To show this uniqueness, it is enough to show that the following diagram
  commutes whenever it makes sense:
  \[ \begin{tikzcd}
    M(n,r) \rar{\rho} \dar[swap]{\Delta^t} & M(m,s) \dar{\Delta^t} \\
    M(n\minus t,r\minus t)[-t] \rar{\rho} & M(m\minus t, s\minus t)[-t].
  \end{tikzcd} \]
  Indeed, if we can prove this, then we can show by induction on the
  length of the composition that in the definition of a morphism of type
  $\Omega^t$ we can always assume that the first morphisms are all of the
  form $\rho$, and the remaining ones are all of the form $\Delta^{t_i}$.
  But then the result is clear, since a composition of restriction morphisms
  is a restriction morphism, and $\Delta^t\circ \Delta^s=\Delta^{t+s}$ (with
  the only indices that make sense), so the morphism is entirely characterized
  by its source, its target, and $t$.

  We now show that the diagram commutes. Let $\alpha\in M(n,r)$. Since all
  morphisms are the identity on the constant components, we may assume that
  $\alpha\in M_0(n,r)$. Let us write $\beta = (\alpha^{(t)})_{|I^{m\minus t,s\minus t}}$,
  and let $\phi\in \Pf_t(K)$, $\psi\in \Pf_{s-t}(K)$ and $q\in I^{m-s}(K)$.
  We can set $\psi=\psi_1\psi_2$ with $\psi_1\in \Pf_{r-t}(K)$ and $\psi_2\in \Pf_{s-r}(K)$;
  then if $q'=\psi_2q\in I^{m-r}(K)$, we have
  \[ \alpha(\phi\psi q) = \alpha(\phi\psi_1 q') = f_t(\phi)\alpha^{(t)}(\psi_1q') = f_t(\phi)\beta(\psi q), \]
  which shows that $\beta = (\alpha_{|I^{m,s}})^{(t)}$.
\end{proof}

\begin{ex}
  The morphism $\Omega^0: M(n,r)\to M(m,s)$ exists when $(m,s)\pgq (n,r)$, and
  of course it is the restriction morphism $\rho$. The morphism
  $\Omega^t: M(n,r)\to M(n\minus t,r\minus t)[-t]$ exists when $t\ppq r$,
  and it is $\Delta^t$.
\end{ex}

\begin{ex}
  There is a morphism $\Omega^t: M(n)\to M(m)[-t]$ when $t\pgq n-m$,
  and if $n=m$ it is $\omega^t$, with $\omega: M(n)\to M(n)[-1]$.
\end{ex}

We can now collect some basic properties of the morphisms $\Omega^t$.

\begin{prop}
  Let $n,m,r,s,t\in \N$ be as is proposition \ref{prop_omega}.
  Then for any $\alpha,\beta\in M_0(n,r)$, we have
  \[ \Omega^t(\alpha\beta) = \{-1\}^t \Omega^t(\alpha)\Omega^t(\beta). \]
\end{prop}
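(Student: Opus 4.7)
The plan is to reduce everything to understanding how $\Delta^t$ interacts with products, since $\rho$ is nothing but a restriction map and is therefore trivially a morphism of $A(k)$-algebras. By Proposition \ref{prop_omega}, $\Omega^t$ is uniquely characterized among compositions of $\rho$'s and $\Delta^{t_i}$'s with $\sum t_i = t$, so picking any such realization and tracking the multiplicative defect at each step will suffice.

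First I would prove the key local statement: for any $\alpha, \beta \in M_0(n,r)$,
\[ \Delta^t(\alpha\beta) = \{-1\}^t \Delta^t(\alpha)\cdot \Delta^t(\beta). \]
Indeed, let $\phi \in \Pf_t(L)$ and $q \in I^{n-t,r-t}(L)$ for some extension $L/K$. By definition of $\Delta^t$,
\[ (\alpha\beta)(\phi\cdot q) = \alpha(\phi\cdot q)\cdot \beta(\phi\cdot q) = f_t(\phi)^2 \cdot \alpha^{(t)}(q)\cdot \beta^{(t)}(q). \]
The crucial identity is $f_t(\phi)^2 = \{-1\}^t f_t(\phi)$: in the case $A=W$ this is the relation $\phi^2 = 2^t\phi = \pfis{-1}^t\phi$ noted in the preliminaries, and in the case $A=H$ it follows from the standard fact $\{a\}\cup\{a\} = \{-1\}\cup\{a\}$ applied to a Galois symbol of length $t$. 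Substituting gives
\[ (\alpha\beta)(\phi\cdot q) = f_t(\phi)\cdot \{-1\}^t \alpha^{(t)}(q)\beta^{(t)}(q), \]
and the uniqueness part of the definition of $\Delta^t$ (which ultimately relies on lemma \ref{lem_a_filtr}) yields the announced formula.

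Next, I would note that $\rho$ preserves products on the nose, since it is induced by pulling back invariants along an inclusion of subfunctors. Therefore, writing $\Omega^t$ as a composition $\omega_d\circ \cdots \circ \omega_0$ where each $\omega_i$ is either $\rho$ (contributing factor $1 = \{-1\}^0$) or $\Delta^{t_i}$ (contributing factor $\{-1\}^{t_i}$ by the previous step), a straightforward induction on the length of the composition gives
\[ \Omega^t(\alpha\beta) = \{-1\}^{\sum_i t_i}\cdot \Omega^t(\alpha)\cdot \Omega^t(\beta) = \{-1\}^t\cdot \Omega^t(\alpha)\cdot \Omega^t(\beta), \]
as desired. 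The only subtlety to check along the way is that at each intermediate stage both factors remain normalized, so that the defining relation for $\Delta^{t_i}$ applies; but this is automatic since $\rho$ and $\Delta^{t_i}$ both send $M_0$ to $M_0$, and products of normalized invariants are normalized. The main (mild) obstacle is thus just the verification of the identity $f_t(\phi)^2 = \{-1\}^t f_t(\phi)$ in a unified way for $A = W$ and $A = H$; everything else is bookkeeping through the composition.
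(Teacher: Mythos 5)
Your argument is correct and matches the paper's: both reduce to a single application of $\Delta^t$ via the decomposition of $\Omega^t$ from Proposition \ref{prop_omega} (with $\rho$ a ring map), and then use the identity $f_t(\phi)^2 = \{-1\}^t f_t(\phi)$ together with the uniqueness in the definition of $\Delta^t$. The only cosmetic difference is that you carry the $\{-1\}$-factors through an arbitrary composition by induction, while the paper implicitly invokes the normal form $\Omega^t = \Delta^t\circ\rho$ already established in the proof of Proposition \ref{prop_omega}.
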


\begin{proof}
  Since the restriction morphisms obviously preserve the product of invariants,
  we may assume that $\Omega^t=\Delta^t$. Then for any $\phi\in \Pf_t(K)$,
  $\psi\in \Pf_{r-t}(K)$ and $q\in I^{n-r}(K)$, we have
  \begin{align*}
    (\alpha\beta)(\phi\psi q) &= (f_t(\phi)\alpha^{(t)}(\psi q))(f_t(\phi)\beta^{(t)}(\psi q)) \\
    &= \{-1\}^tf_t(\phi) (\alpha^{(t)}\beta^{(t)})(\psi q),
  \end{align*}
  hence the result.
\end{proof}

We may note from proposition-definition \ref{def_psi} that we have well-defined
filtered morphisms
\[ \Psi: M(n,r)\To M(n,r)[-1] \]
for any $n,r\in \N$ such that $r<n$.

\begin{prop}
  Let $n,m,r,s,t\in \N$ be as is proposition \ref{prop_omega}. Then the following
  diagram of filtered $A(k)$-modules commutes:
  \[ \begin{tikzcd}
    M(n,r) \rar{\Omega^t} \dar[swap]{\Psi} & M(m,s)[-t] \dar{\Psi} \\
    M(n,r)[-1] \rar{\Omega^t} & M(m,s)[-t-1].
    \end{tikzcd} \]
\end{prop}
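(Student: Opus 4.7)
By proposition \ref{prop_omega}, every morphism of type $\Omega^t$ is a composition of restriction morphisms $\rho$ and shift morphisms $\Delta^{t_i}$. Since $\Psi$ is defined in the same way on every $M(n,r)$, the commutation we want splits into two independent verifications: $\Psi$ commutes with each $\rho$ and with each $\Delta^{t_i}$. The first is essentially tautological, the second is the real content.

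For $\rho:M(n,r)\to M(m,s)$, if $\alpha\in M(n,r)$ and $q\in I^{m,s}(K)\subset I^{n,r}(K)$, then the defining formula $\alpha(\fdiag{\lambda}q)=\alpha(q)+\{\lambda\}\tld{\alpha}(q)$ restricted to $I^{m,s}$ shows by uniqueness in proposition-definition \ref{def_psi} that $\widetilde{\rho(\alpha)}=\rho(\tld{\alpha})$, so $\Psi\circ\rho=\rho\circ\Psi$.

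For $\Delta^t:M(n,r)\to M(n-t,r-t)[-t]$, one first reduces to $\alpha\in M_0(n,r)$, since $\Psi$ annihilates constants and $\Delta^t$ acts as the identity on the constant component. The key input is that $I^{n-t,r-t}(K)$ is stable under scalar multiplication: this follows from the identity $\fdiag{\lambda}\pfis{a_1,\dots,a_s}=\pfis{\lambda,a_2,\dots,a_s}-\pfis{\lambda a_1,a_2,\dots,a_s}$, which puts $\fdiag{\lambda}q$ back in $I^{r-t,r-t}(K)\cap I^{n-t}(K)=I^{n-t,r-t}(K)$ (using remark \ref{rem_milnor}). Therefore, for $\phi\in\Pf_t(L)$, $\lambda\in L^*$ and $q\in I^{n-t,r-t}(K)$, the element $\fdiag{\lambda}\phi q$ lies in $I^{n,r}(L)$ and may be grouped in two ways as $\phi\cdot(\fdiag{\lambda}q)$ or $\fdiag{\lambda}\cdot(\phi q)$. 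Evaluating $\alpha$ in both orders gives
\[ \alpha\bigl(\phi\cdot\fdiag{\lambda}q\bigr) = f_t(\phi)\alpha^{(t)}(\fdiag{\lambda}q) = f_t(\phi)\alpha^{(t)}(q) + \{\lambda\}f_t(\phi)\widetilde{\alpha^{(t)}}(q) \]
and
\[ \alpha\bigl(\fdiag{\lambda}\cdot\phi q\bigr) = \alpha(\phi q) + \{\lambda\}\tld{\alpha}(\phi q) = f_t(\phi)\alpha^{(t)}(q) + \{\lambda\}f_t(\phi)(\tld{\alpha})^{(t)}(q). \]
Equating these and using $\{\lambda\}f_t(\phi)=f_{t+1}(\pfis{\lambda}\phi)$ from equation (\ref{eq_prod_fn}) yields
\[ f_{t+1}(\pfis{\lambda}\phi)\cdot\bigl(\widetilde{\alpha^{(t)}}(q)-(\tld{\alpha})^{(t)}(q)\bigr)=0 \]
in $A(L)$. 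Since every element of $\Pf_{t+1}(L)$ can be written as $\pfis{\lambda}\phi$ with $\phi\in\Pf_t(L)$ and $\lambda\in L^*$, and this holds over every extension $L/K$, lemma \ref{lem_a_filtr} gives $\widetilde{\alpha^{(t)}}(q)=(\tld{\alpha})^{(t)}(q)$ for every $q$, proving $\Psi\circ\Delta^t=\Delta^t\circ\Psi$.

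The only genuine step is the double evaluation of $\alpha(\fdiag{\lambda}\phi q)$; everything else is bookkeeping to reduce to it and to pass from the multiplied identity back to the invariant identity via lemma \ref{lem_a_filtr}. The main subtlety to watch is that $\fdiag{\lambda}q$ remains in $I^{n-t,r-t}(K)$, which is what allows $\alpha^{(t)}$ to be applied to it; this is where the stability of $I^{n,r}$ under scalar multiplication is used, itself a consequence of Milnor's conjecture through remark \ref{rem_milnor}.
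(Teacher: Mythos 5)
Your proof follows essentially the same route as the paper: reduce (via the decomposition into $\rho$'s and $\Delta^{t_i}$'s) to the case $\Omega^t=\Delta^t$, then evaluate $\alpha$ on $\fdiag{\lambda}\phi q$ (with $q$ written as $\psi q'$, $\psi\in\Pf_{r-t}$, $q'\in I^{n-r}$ in the paper) in two ways and equate; you are more explicit than the paper in extracting $\widetilde{\alpha^{(t)}}=(\tld{\alpha})^{(t)}$ from the identity via lemma \ref{lem_a_filtr} applied to $f_{t+1}(\pfis{\lambda}\phi)$, a cancellation the paper elides.

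One small defect: your justification that $I^{n-t,r-t}(K)$ is stable under similitudes is off. The identity $\fdiag{\lambda}\pfis{a_1,\dots,a_s}=\pfis{\lambda a_1,a_2,\dots,a_s}-\pfis{\lambda,a_2,\dots,a_s}$ (note you have the opposite sign) expresses $\fdiag{\lambda}q$ as a \emph{difference} of two elements of $I^{n-t,r-t}(K)$, which does not place $\fdiag{\lambda}q$ in $I^{n-t,r-t}(K)$ itself, since $I^{r-t,r-t}(K)$ is a union of principal ideals, not a subgroup. The correct (and simpler) reason is that if $q=\phi q'$ with $\phi\in\Pf_{r-t}$, then $\fdiag{\lambda}q=\phi\cdot(\fdiag{\lambda}q')$ is still a multiple of $\phi$; alternatively, one may simply cite the paper's remark after proposition \ref{prop_simil} that $\Psi$ is already well-defined on $M(n,r)$. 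The conclusion you needed is true, so the proof stands, but the supporting argument should be repaired.
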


\begin{proof}
  The definition of $\Psi$ makes it clear that it commutes with restriction
  morphisms, since it is defined on the whole $\Inv(W,A)$. Thus we
  may assume that $\Omega^t=\Delta^t$. Let $\alpha\in M(n,r)$, $\phi\in \Pf_t(K)$,
  $\psi\in \Pf_{r-t}(K)$, $q\in I^{n-r}(K)$, and $\lambda\in K^*$. Then:
  \begin{align*}
    \alpha(\fdiag{\lambda}\phi\psi q) &= f_t(\phi) \alpha^{(t)}(\fdiag{\lambda}\psi q) \\
                   &= f_t(\phi) \alpha^{(t)}(\psi q) + f_t(\phi)\{\lambda\} \tld{\alpha^{(t)}}(\psi q)
  \end{align*}
  but also
  \begin{align*}
    \alpha(\fdiag{\lambda}\phi\psi q) &= \alpha(\phi\psi q) + \{\lambda\}\tld{\alpha}(\phi\psi q) \\
                     &= f_t(\phi) \alpha^{(t)}(\psi q) + \{\lambda\}f_t(\phi) \tld{\alpha}^{(t)}(\psi q)
  \end{align*}
  which gives $\tld{\alpha^{(t)}} = \tld{\alpha}^{(t)}$.
\end{proof}

Since we saw in corollary \ref{cor_restr_w} that $\Phi^+$ is far from commuting
with the restriction morphisms, we cannot expect such a good compatibility with
the morphisms $\Omega^t$, but we still get:

\begin{prop}\label{prop_omega_phi}
  Let $n\in \N^*$ and let $t\in \N$ be such that $t<n$. Then the following
  diagram of filtered $A(k)$-modules commutes for any $\eps=\pm 1$:
  \[ \begin{tikzcd}
    M(n) \rar{\Omega^t} \dar[swap]{\Phi^\eps} & M(n\minus t)[-t] \dar{\Phi^\eps} \\
    M(n)[-n] \rar{\{-1\}^t \Omega^t} & M(n\minus t)[-n].
    \end{tikzcd} \]
\end{prop}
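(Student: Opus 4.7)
The plan is to verify the diagram directly, by computing both compositions on an arbitrary $\alpha \in M(n)$ and using the defining properties of $\Omega^t = \Delta^t \circ \rho$ and of $\Phi^\eps$. Since $\Omega^t$ is the identity on constants (as $\Delta^t$ preserves the value at $0$) while $\Phi^\eps$ annihilates them, both $\Phi^\eps\Omega^t$ and $\{-1\}^t\Omega^t \Phi^\eps$ vanish on the constant summand $A(k) \subset M(n)$, so I may reduce to $\alpha \in M_0(n)$. In this case $\Omega^t \alpha \in M_0(n-t)$ is characterized by $\alpha(\phi q) = f_t(\phi)(\Omega^t\alpha)(q)$ for every $\phi \in \Pf_t(L)$, $q \in I^{n-t}(L)$, $L/K$.

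The central step is to compute $\alpha(\phi(q+\eps\psi))$ in two ways, for $\phi \in \Pf_t(L)$, $\psi \in \Pf_{n-t}(L)$, $q \in I^{n-t}(K)$. Reading $\phi(q+\eps\psi)$ as $\phi\cdot(q+\eps\psi) \in I^{n,t}$ and combining the $\Omega^t$-characterization with the $\Phi^\eps$-characterization of $\Omega^t\alpha$ yields
\[\alpha(\phi(q+\eps\psi)) = f_t(\phi)(\Omega^t\alpha)(q) + \eps f_t(\phi) f_{n-t}(\psi)(\Phi^\eps \Omega^t \alpha)(q).\]
Reading instead $\phi(q+\eps\psi) = \phi q + \eps(\phi\psi)$ with $\phi\psi \in \Pf_n(L)$, and using the $\Phi^\eps$-characterization of $\alpha$ together with $f_n(\phi\psi) = f_t(\phi) f_{n-t}(\psi)$, gives
\[\alpha(\phi(q+\eps\psi)) = f_t(\phi)(\Omega^t\alpha)(q) + \eps f_t(\phi) f_{n-t}(\psi)(\Phi^\eps\alpha)(\phi q).\]
Equating these two expressions identifies $(\Phi^\eps \Omega^t \alpha)(q)$ with $(\Phi^\eps\alpha)(\phi q)$ modulo the prefactor $f_t(\phi) f_{n-t}(\psi)$. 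Two successive applications of lemma \ref{lem_a_filtr} (first with $\phi$ generic in $\Pf_t$, then with $\psi$ generic in $\Pf_{n-t}$, both ranging over all extensions $L/K$) eventually strip this prefactor away.

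The main obstacle, and precisely where the twist $\{-1\}^t$ enters, is that $\Phi^\eps \alpha$ need not be normalized even when $\alpha$ is (for instance $\Phi^+ f_n = 1$), so the $\Omega^t$-characterization must be used in its piecewise form
\[(\Phi^\eps\alpha)(\phi q) = (\Phi^\eps\alpha)(0) + f_t(\phi)\bigl[(\Omega^t \Phi^\eps \alpha)(q) - (\Phi^\eps\alpha)(0)\bigr].\]
Substituting this into the second expansion, the outer $f_t(\phi)$ multiplies the $f_t(\phi)$ inside the bracket, and the key identity $f_t(\phi)^2 = \{-1\}^t f_t(\phi)$ (coming from $\phi^2 = 2^t\phi$ and $\delta\{-1\} = 2$) produces exactly the factor $\{-1\}^t$ in front of $(\Omega^t \Phi^\eps \alpha)(q)$. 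Once $f_t(\phi) f_{n-t}(\psi)$ is cancelled via lemma \ref{lem_a_filtr} and the residual constant contributions are verified to cancel, one reaches the stated commutativity $\Phi^\eps \Omega^t \alpha = \{-1\}^t \Omega^t \Phi^\eps \alpha$. The mechanism is formally analogous to proposition \ref{prop_phi_pm}: both twists arise from squaring a Pfister form, and lemma \ref{lem_a_filtr} is the final tool that removes the generic Pfister prefactor to isolate the identity between invariants.
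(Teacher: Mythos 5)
Your approach is structurally the same as the paper's: compute $\alpha(\phi(q+\eps\psi))$ in two ways, one via $\Phi^\eps$ on $M(n)$ and one via $\Phi^\eps$ on $M(n-t)$ after applying $\Omega^t$, then strip the generic Pfister prefactor with lemma~\ref{lem_a_filtr}. You are actually \emph{more} careful than the paper, because you explicitly notice that $\Phi^\eps\alpha$ need not be normalized and therefore write the $\Omega^t$-characterization in its piecewise form. The paper glosses over this and simply writes $f_n(\phi\psi)\alpha^\eps(\phi q) = \{-1\}^t f_n(\phi\psi)(\alpha^\eps)^{(t)}(q)$, which is only valid when $\alpha^\eps$ is normalized.

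However, your final claim that ``the residual constant contributions are verified to cancel'' is where the argument breaks down, and it is in fact a genuine gap. Carrying your own piecewise substitution to the end, with $c=(\Phi^\eps\alpha)(0)$, equating the two expansions and stripping $f_t(\phi)f_{n-t}(\psi)$ yields
\[
(\Phi^\eps\Omega^t\alpha)(q) \;=\; (1-\{-1\}^t)\,c \;+\; \{-1\}^t(\Omega^t\Phi^\eps\alpha)(q),
\]
and the correction $(1-\{-1\}^t)c$ does not vanish in general. This is not a repairable bookkeeping slip: the proposition as stated fails for $\alpha=f_n^1$ once $t\pgq 1$. Indeed $\Omega^t(f_n)=f_{n-t}$, so $\Phi^+\Omega^t(f_n)=f_{n-t}^0=1$, whereas $\{-1\}^t\Omega^t\Phi^+(f_n)=\{-1\}^t\Omega^t(1)=\{-1\}^t$, and $1\neq\{-1\}^t$ in general. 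The corrected statement, for normalized $\alpha$, is the displayed identity above; the commutativity in the proposition holds only under the extra hypothesis $(\Phi^\eps\alpha)(0)=0$. Note that corollary~\ref{cor_divisible} survives: in its inductive step one applies the proposition to $\alpha=f_n^d$ with $d\pgq 2$, where $(\Phi^+f_n^d)(0)=f_n^{d-1}(0)=0$, and the base case $d=1$ is computed directly without the proposition.
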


\begin{proof}
  The diagram obviously commutes for the constant components (since we
  find 0 in both cases), so we may consider $\alpha\in M_0(n)$. Let
  $\phi\in \Pf_t(K)$, $\psi\in \Pf_{n-t}(K)$ and $q\in I^{n-t}(K)$.
  Then:
  \begin{align*}
    \alpha(\phi(q+\eps\psi)) &= \alpha(\phi q) + \eps f_n(\phi\psi) \alpha^\eps(\phi q) \\
                         &= f_t(\phi)\alpha^{(t)}(q) + \eps \{-1\}^tf_n(\phi\psi)(\alpha^+)^{(t)}(q)
  \end{align*}
  as well as
  \begin{align*}
    \alpha(\phi(q+\eps\psi)) &= f_t(\phi)\alpha^{(t)}(q+\eps\psi) \\
                         &= f_r(\phi)\alpha^{(r)}(q) + \eps f_r(\phi)f_{n-r}(\psi)(\alpha^{(r)})^+(q)
  \end{align*}
  which proves that $((\alpha_{|I^{n,t}})^{(t)})^+ = \{-1\}^t(\alpha^+)_{|I^{n,t}}^{(t)}$.
\end{proof}

\begin{coro}\label{cor_divisible}
  Let $n,t\in \N$ such that $t< n$. Then for any $d\in \N^*$,
  the morphism $\Omega^t: M(n)\to M(n\minus t)[-t]$ satisfies
  \[  \Omega^t(f_n^d) = \{-1\}^{t(d-1)}f_{n-t}^d.  \]
  In particular, if $\phi\in \Pf_t(K)$ and $q\in I^n(K)$ is a multiple
  of $\phi$, then $f_n^d(q)$ is a multiple of $f_t(\phi)$.
\end{coro}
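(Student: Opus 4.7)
My plan is to prove the main formula $\Omega^t(f_n^d) = \{-1\}^{t(d-1)} f_{n-t}^d$ by induction on $d$, using the shifting operator as the main lever, and then deduce the ``In particular'' statement directly from the defining equation of $\Omega^t$.

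For the base case $d=1$, the formula reduces to showing $\Omega^t(f_n) = f_{n-t}$, i.e.\ $f_n(\phi q) = f_t(\phi)\cdot f_{n-t}(q)$ for all $\phi\in \Pf_t(K)$ and $q\in I^{n-t}(K)$, which is exactly formula (\ref{eq_prod_fn}).

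For the inductive step, assume the formula for $d$ and consider $f_n^{d+1}$. Applying $\Phi^+$ and using proposition \ref{prop_omega_phi} to commute it past $\Omega^t$ (picking up a factor of $\{-1\}^t$), then proposition \ref{prop_phi_pi} to compute $\Phi^+(f_n^{d+1})=f_n^d$, then the induction hypothesis:
\[ \Phi^+\bigl(\Omega^t(f_n^{d+1})\bigr) = \{-1\}^t\,\Omega^t(f_n^d) = \{-1\}^{td}\,f_{n-t}^d. \]
On the other hand, proposition \ref{prop_phi_pi} gives $\Phi^+\bigl(\{-1\}^{td} f_{n-t}^{d+1}\bigr) = \{-1\}^{td} f_{n-t}^d$. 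Hence $\Omega^t(f_n^{d+1}) - \{-1\}^{td} f_{n-t}^{d+1}$ lies in $\ker \Phi^+$, which by proposition \ref{prop_ker_phi} consists of constant invariants. Since $f_n^{d+1}$ is normalized (by corollary \ref{cor_sum_f}), so is its restriction to $I^{n,t}$, and the definition of $\Delta^t$ ensures normalization is preserved; thus $\Omega^t(f_n^{d+1})$ vanishes at $0$, as does $f_{n-t}^{d+1}$, so the constant is $0$, which finishes the induction.

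For the second part, suppose $q = \phi q'$ with $\phi\in \Pf_t(K)$ and $q'\in I^{n-t}(K)$. Applying the defining identity of $\Omega^t = \Delta^t\circ \rho$ on $f_n^d \in M_0(n)$ and substituting the formula we just established gives
\[ f_n^d(q) = f_n^d(\phi q') = f_t(\phi)\cdot \Omega^t(f_n^d)(q') = \{-1\}^{t(d-1)}\, f_t(\phi)\cdot f_{n-t}^d(q'), \]
which visibly exhibits $f_n^d(q)$ as a multiple of $f_t(\phi)$. No step presents a serious obstacle: the only thing one has to be mindful of is tracking the constant-invariant ambiguity at each induction step, which is handled uniformly by the normalization of all the $f_m^d$ for $d\geq 1$.
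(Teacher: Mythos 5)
Your proof is correct and follows essentially the same route as the paper: the paper's own proof of the formula is a one-line ``induction on $d$ using proposition \ref{prop_omega_phi},'' which your argument fleshes out, and your deduction of the divisibility statement from the defining identity of $\Delta^t$ matches the paper exactly. The one small omission is in the ``in particular'' step: the hypothesis only gives $q = \phi\cdot r$ for some $r\in W(K)$, and you need to invoke remark \ref{rem_milnor} (the Milnor-conjecture consequence $I^{n,t}=I^{t,t}\cap I^n$) to conclude that one may in fact take $q'\in I^{n-t}(K)$ before the defining identity of $\Delta^t$ applies.
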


\begin{proof}
  The formula follows from an induction of $d$, using proposition
  \ref{prop_omega_phi}. For the last statement, note that according
  to remark \ref{rem_milnor}, there is $q'\in I^{n-t}(K)$ such that
  $q=\phi q'$. Then according to the formula,
  \[ f_n^d(q) = f_n^d(\phi q') = \{-1\}^{t(d-1)}f_t(\phi)f_{n-t}^d(q'). \qedhere \]
\end{proof}

We now turn to the proof of theorem \ref{thm_delta}. We first need a
preliminary lemma:

\begin{lem}\label{lem_thm_delta}
  Let $a,b\in K^*$, and $q\in \hat{I}(K)$ of the form
  \[ q = \sum_{i=1}^r \fdiag{x_i}\gpfis{c_i} \]
  where $c_i$ is represented by $\pfis{ab}$. Then for any $k\in \N^*$, 
  \[ \pfis{a}\lambda^k(q) = \pfis{b}\lambda^k(q). \]
  In particular, for any $n,d\in \N^*$, $\pfis{a}\pi_n^d(q) = \pfis{b}\pi_n^d(q)$.
\end{lem}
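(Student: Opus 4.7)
The plan is to establish the formal identity
\[ \gpfis{ab}\cdot \lambda_t(q) = \gpfis{ab} \qquad \text{in } GW(K)[[t]], \]
which will immediately yield $\gpfis{ab}\cdot \lambda^k(q) = 0$ for all $k \pgq 1$. The main equality $\pfis{a}\lambda^k(q) = \pfis{b}\lambda^k(q)$ will then follow from the computation
\[ \pfis{a} - \pfis{b} = \fdiag{-a} - \fdiag{-b} = \fdiag{-a}\cdot \gpfis{ab} \qquad \text{in } GW(K), \]
using $\fdiag{-a}\fdiag{ab} = \fdiag{-b}$. For the ``in particular'' part, since $h_n(t) \in t + t^2\Z[[t]]$, each $\pi_n^d(q) = \sum_{1\ppq k\ppq d} c_{d,k}\lambda^k(q)$ is an integer combination of the $\lambda^k(q)$ with $k\pgq 1$, so the conclusion passes from the $\lambda^k$ to the $\pi_n^d$.

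The key algebraic input is the roundness of the Pfister form $\pfis{ab}$: if $c\in K^*$ is represented by $\pfis{ab}$, then $\fdiag{c}\cdot \pfis{ab}$ and $\pfis{ab}$ are isometric as quadratic forms, so
\[ \fdiag{c} + \fdiag{ab} = \fdiag{1} + \fdiag{abc} \qquad \text{in } GW(K). \]
Reading this last identity both as $\fdiag{ab}\gpfis{c} = \gpfis{c}$ and as $\fdiag{c}\gpfis{ab} = \gpfis{ab}$ yields the two crucial equalities
\[ \gpfis{ab}\cdot \gpfis{c} = 0 \qquad \text{and} \qquad \gpfis{ab}\cdot \fdiag{c} = \gpfis{ab}, \]
valid in $GW(K)$ for every $c = c_i$ appearing in $q$.

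For a single summand, write $\fdiag{x}\gpfis{c} = \fdiag{x} - \fdiag{xc}$ and use the morphism property of $\lambda_t$ together with $\fdiag{x}\fdiag{xc} = \fdiag{c}$ to compute
\[ \lambda_t(\fdiag{x}\gpfis{c}) = \frac{1 + \fdiag{x}t}{1 + \fdiag{xc}t} = \frac{1 + \fdiag{x}\gpfis{c}\,t - \fdiag{c}\,t^2}{1 - t^2}. \]
Multiplying the numerator by $\gpfis{ab}$ and applying the two identities above collapses it to $\gpfis{ab}(1 - t^2)$, which cancels the denominator and gives $\gpfis{ab}\cdot \lambda_t(\fdiag{x}\gpfis{c}) = \gpfis{ab}$. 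The general case then follows by induction on $r$ from the multiplicativity $\lambda_t(q) = \prod_{i=1}^r \lambda_t(\fdiag{x_i}\gpfis{c_i})$ and the elementary observation that if $\gpfis{ab}(u-1) = 0 = \gpfis{ab}(v-1)$, then $\gpfis{ab}(uv-1) = \gpfis{ab}(u-1)v + \gpfis{ab}(v-1) = 0$.

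I expect the main subtlety to be the need to establish the two identities of the second paragraph as honest equalities in $GW(K)$ rather than in $W(K)$: many Pfister-type relations are trivialised in the Witt ring through hyperbolic cancellations, but the $\lambda_t$ machinery genuinely takes place in Grothendieck-Witt, so the isometry $\fdiag{c}\pfis{ab} \simeq \pfis{ab}$ must be exploited as an identity of actual quadratic forms and not merely of their Witt classes.
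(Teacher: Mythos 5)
Your proof is correct in substance and its mathematical content coincides with the paper's: both rest on the same two ingredients, namely that $\lambda^d(\fdiag{x}\gpfis{c})=\fdiag{x^d}\gpfis{c}$ for $d\pgq 1$ (lemma \ref{lem_lambda_gpfis}, which your rational-function computation re-derives) and that $\gpfis{c}$ is annihilated by $\gpfis{ab}$, equivalently $\pfis{a}\gpfis{c}=\pfis{b}\gpfis{c}$, when $c\in D(\pfis{ab})$. The presentations differ mildly: the paper expands each $\lambda^k(q)$ by multiplicativity and applies lemma \ref{lem_lambda_gpfis} termwise, whereas you package the whole thing as $\gpfis{ab}\cdot\lambda_t(q)=\gpfis{ab}$ and then factor $\gpfis{a}-\gpfis{b}=\fdiag{-a}\gpfis{ab}$, which is a pleasant reorganization (it makes the ``only the constant term survives mod $\gpfis{ab}$'' picture visible). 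Neither route is materially shorter.

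One detail needs tightening. You deduce $\fdiag{c}+\fdiag{ab}=\fdiag{1}+\fdiag{abc}$ in $GW(K)$ from the roundness isometry $\fdiag{c}\pfis{ab}\simeq\pfis{ab}$. With the paper's convention $\pfis{ab}=\fdiag{1,-ab}$, that isometry reads $\fdiag{c,-abc}\simeq\fdiag{1,-ab}$, i.e.\ $\fdiag{c}+\fdiag{-abc}=\fdiag{1}+\fdiag{-ab}$, which is not the identity you wrote. Your identity is nevertheless true: adding $\fdiag{ab}+\fdiag{abc}$ to both sides of the correct one and cancelling the two hyperbolic planes $\fdiag{ab,-ab}\simeq\fdiag{abc,-abc}$ recovers it. Alternatively, and more cleanly, observe directly that $\gpfis{ab}\gpfis{c}=\gpfis{ab,c}$ has Witt class $\pfis{ab,c}=0$ (the $2$-fold Pfister form $\pfis{ab,c}$ is isotropic, hence hyperbolic, because $c\in D(\pfis{ab})$), and since $\hat I^2(K)\to I^2(K)$ is injective this forces $\gpfis{ab}\gpfis{c}=0$ in $GW(K)$; the identity $\gpfis{ab}\fdiag{c}=\gpfis{ab}$ is then $\gpfis{ab}(1-\gpfis{c})=\gpfis{ab}$. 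The same caution applies to your line $\pfis{a}-\pfis{b}=\fdiag{-a}-\fdiag{-b}$: this is the correct formula for the rank-two Pfister forms $\fdiag{1,-a}$, $\fdiag{1,-b}$ in $GW(K)$, but it equals $\gpfis{a}-\gpfis{b}=\fdiag{b}-\fdiag{a}$ only after one more hyperbolic cancellation; both versions are fine for the argument since the discrepancy is $2\cdot\mathbb{H}$, which dies under multiplication by $\gpfis{ab}$ anyway. With these remarks the proof goes through.
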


\begin{proof}
  We have
  \[ \lambda^k(q) = \sum_{d_1+\cdots+d_r=k}
    \lambda^{d_1}(\fdiag{x_1}\gpfis{c_1})\cdots \lambda^{d_r}(\fdiag{x_r}\gpfis{c_r}). \]
  Now at least one of the $d_i$ is non-zero, so we may conclude since
  \begin{align*}
    \pfis{a} \lambda^{d_i}(\fdiag{x_i}\gpfis{c_i}) &= \fdiag{x_i^d}\pfis{a}\gpfis{c_i} \\
         &= \fdiag{x_i^d}\pfis{b}\gpfis{c_i} \\
         &= \pfis{b} \lambda^{d_i}(\fdiag{x_i}\gpfis{c_i}),
  \end{align*}
  where we use lemma \ref{lem_lambda_gpfis}, and the fact that if $c$ is represented
  by $\pfis{ab}$ then $\pfis{a,c}=\pfis{b,c}$. The statement about $\pi_n^d$
  follows since by definition $\pi_n^d$ is a combination of the $\lambda^k$ with
  $1\ppq k\ppq d$.
\end{proof}

We can finally prove:

\begin{proof}[Proof of theorem \ref{thm_delta}]
  It suffices to show that $f_n^d$ is in the image of $\Delta^1$
  for all $d\pgq 1$, which amounts to say that $\pfis{a}q\mapsto \{a\}f_n^d(q)$
  is well-defined, in other words that if $q,q'\in I^n(K)$ and $a,b\in K^*$,
  then $\pfis{a}q = \pfis{b}q'$ implies $\{a\}f_n^d(q) = \{b\} f_n^d(q')$.

  Assume first that $a=b$. Then according to \cite[6.23]{EKM}, 
  \[ q - q' = \sum_{i\in J} \pfis{c_i}q_i \]
  where $q_i\in W(K)$ and $c_i$ is represented by $\pfis{a}$. We may then reason by induction on $|J|$,
  and we are reduced to the case where $q' = q +\pfis{c}q_0$, with $c$ represented
  by $\pfis{a}$. But according to corollary \ref{cor_divisible}, for any $k\in \N^*$,
  $f_n^k(\pfis{c}q_0)$ is divisible by $\{c\}$, so $\{a\}f_n^k(\pfis{c}q_0) = 0$.
  From there:
  \begin{align*}
    \{a\}f_n^d(q') &= \{a\}\sum_{k=0}^d f_n^k(q)f_n^{d-k}(\pfis{c}q_0)\\
                       &= \{a\}f_n^d(q).
  \end{align*}

  Suppose now $a\neq b$. Then Hoffmann shows in \cite[B.5]{Gar} that we have
  \[ \pfis{a}q = \pfis{a}q_0 = \pfis{b}q_0 = \pfis{b}q' \]
  where $q_0 = \sum_{i\in J} \fdiag{x_i}\pfis{c_i}\in I^n(K)$, and $c_i$ is represented
  by $\pfis{ab}$. The previous discussion shows that $\{a\}f_n^d(q) = \{a\}f_n^d(q_0)$
  and $\{b\}f_n^d(q) = \{b\}f_n^d(q_0)$, so it just remains to show 
  that $\{a\}f_n^d(q_0) = \{b\}f_n^d(q_0)$ for any $q_0$
  admitting a decomposition as above, which is a direct consequence of
  lemma \ref{lem_thm_delta}.
\end{proof}

\bibliographystyle{plain}
\bibliography{invariants_witt}

\end{document}